\documentclass{article}
\usepackage[T1]{fontenc}
\usepackage{tgpagella}
\usepackage[a4paper,left=1.1in,right=1.1in,top=1.1in,bottom=1.1in,footskip=0.5in]{geometry}

\input{our_macros.sty}
\begin{document}

\title{Spectral Alignment of Correlated Gaussian matrices.}
\author{Luca Ganassali\footnote{
		INRIA, DI/ENS, PSL Research University, Paris, France. Email: \texttt{luca.ganassali@inria.fr}} , Marc Lelarge\footnote{
		INRIA, DI/ENS, PSL Research University, Paris, France. Email: \texttt{marc.lelarge@ens.fr}} , Laurent Massoulié\footnote{
		MSR-INRIA Joint Centre, INRIA, DI/ENS, PSL Research University, Paris, France. Email: \texttt{laurent.massoulie@inria.fr}}}
\date{\today}
\maketitle
\begin{abstract}
	In this paper we analyze a simple spectral method (\emph{EIG1}) for the problem of matrix alignment, consisting in aligning their leading eigenvectors: given two matrices $A$ and $B$, we compute $v_1$ and $v'_1$ two corresponding leading eigenvectors. The algorithm returns the permutation $\hat{\pi}$ such that the rank of coordinate $\hat{\pi}(i)$ in $v_1$ and that of coordinate $i$ in $v'_1$ (up to the sign of $v'_1$) are the same. 	
	
	We consider a model of weighted graphs where the adjacency matrix $A$ belongs to the Gaussian Orthogonal Ensemble (GOE) of size $N \times N$, and $B$ is a noisy version of $A$ where all nodes have been relabeled according to some planted permutation $\pi$, namely $B= \Pi^T (A+\sigma H) \Pi $, where $\Pi$ is the permutation matrix associated with $\pi$ and $H$ is an independent copy of $A$. We show the following zero-one law: with high probability, under the condition $\sigma N^{7/6+\epsilon} \to 0$ for some $\eps>0$, \emph{EIG1} recovers all but a vanishing part of the underlying permutation $\pi$, whereas if $\sigma N^{7/6-\epsilon} \to \infty$, this method cannot recover more than $o(N)$ correct matches.
	
	This result gives an understanding of the simplest and fastest spectral method for matrix alignment (or complete weighted graph alignment), and involves proof methods and techniques which could be of independent interest.
\end{abstract}

\section{Introduction}
\subsubsection*{The graph alignment problem}
Graph alignment (or graph matching, network alignment) consists in recovering the underlying vertex correspondence between two correlated graphs, and hence can be viewed as the noisy version of the isomorphism problem. 
Many questions can be phrased as graph alignment problems. They are found in various fields, such as network privacy and data de-anonymization \cite{Narayanan08,Narayanan09}, biology and protein-protein interaction networks \cite{Singh08}, natural language processing \cite{Haghighi05}, as well as pattern recognition in image processing \cite{CFVS04}.

For two graphs of size $N$ with adjacency matrices $A$ and $B$, the graph matching problem can be formalized as an optimization problem: 
\begin{equation}
\label{QAP}
\argmax_{P \in \mathcal{S}_N} \langle A, P B P^T \rangle ,
\end{equation}
where the maximum is taken over all $N \times N$ permutation matrices, and $\langle \cdot,\cdot \rangle$ is the canonical matrix inner product. Note that for each $P \in \mathcal{S}_N, \; P B P^T $ is the matrix obtained from $B$ when relabeling the nodes according to $P^{-1}$. This formulation is a special case of the well studied \textit{quadratic assignment problem} (QAP) \cite{Pardalos94}, which is known to be NP-hard in the worst case, as well as some of its approximations \cite{Makarychev14}. A natural idea is then to study the average-case version of this problem, when $A$ and $B$ are random instances. Following a recent line of work \cite{Feizi16,Ding18,Fan2019Wigner}, this paper focuses on the case where the signal lies in the weights of edges between all pairs of nodes.

\subsubsection*{Related work} Some general spectral methods for random graph alignment are introduced in \cite{Feizi16}, based on representation matrices and low-rank approximations. These methods are tested over synthetic graphs and real data; however no precise theoretical guarantee -- e.g. an error control of the inferred mapping depending on the signal-to-noise ratio -- can be found for such techniques. 

Most recently, a spectral method for matrix and graph alignment (\emph{GRAMPA}) was proposed in \cite{Fan2019Wigner,fan2019ERC} and computes a similarity matrix which takes into account all pairs of eigenvalues $(\lambda_i, \mu_j)$ and eigenvectors $(u_i,v_j)$ of matrices $A$ and $B$. The authors study the regime in which the method exactly recovers the underlying vertex correspondence, meeting the state-of-the-art performances for alignment of Erdös-Rényi graphs in polynomial time, and improving the performances among spectral methods for matrix alignment. This method can tolerate a noise $\sigma$ up to $O\left(1/\log N\right)$ to recover the entire underlying vertex correspondence. Since the computations of all eigenvectors is required, the time complexity of \emph{GRAMPA} is at least $O(N^3)$. 

It is important to note that the signs of eigenvectors are ambiguous: in order to optimize the cost function in practice, it is necessary to test over all possible signs of eigenvectors. This additional complexity has no consequence when reducing $A$ and $B$ to rank-one matrices, but becomes costly when the reduction made is of rank $k \gg 1$. This combinatorial observation makes implementation and analysis of general rank-reduction methods (as the ones proposed in \cite{Feizi16}) more difficult. We therefore focus on the analysis of the rank-one reduction (\emph{EIG1} hereafter) which is the simplest and most natural spectral alignment method, where only the leading eigenvectors of $A$ and $B$ are computed, with time complexity $O(N^2)$, which is significantly less than \emph{GRAMPA}.

\subsubsection*{Gaussian weighted graph matching: model and method} As mentioned above, we focus on the case where the graphs are complete, weight-correlated. Matrices $A$ and $B$ are thus symmetric, with correlated entries. A natural model recently studied in \cite{Feizi16,Ding18,Fan2019Wigner} is as follows: $A$ and $H$ are two $N \times N$ independent normalized matrices of the Gaussian Orthogonal Ensemble (GOE), i.e. such that for all $1 \leq i \leq j \leq N$, 
\begin{equation}
\label{GOEmodel}
A_{i,j} = A_{j,i} \sim \begin{cases}
\frac{1}{\sqrt{N}} \mathcal{N}(0,1) & \text{if $i \neq j$}, \\
\frac{\sqrt{2}}{\sqrt{N}} \mathcal{N}(0,1) & \text{if $i = j$},
\end{cases}
\end{equation} and $H$ is an independent copy of $A$. We define $B=\Pi^T \left(A+\sigma H\right) \Pi$, where $\Pi$ is the matrix of some permutation $\pi$ -- e.g. random uniform -- of $\left\lbrace 1,\ldots,N\right\rbrace$ (i.e such that $\Pi_{i,j}=1$ iff $i = \pi(j)$), and $\sigma = \sigma(N)$ is the \textit{noise parameter}.

Given two vectors $x=\left(x_1,\ldots,x_n\right)$ and $y=\left(y_1,\ldots,y_n\right)$ having all distinct coordinates, the permutation $\rho$ which \textit{aligns} $x$ and $y$ is the permutation such that for all $1 \leq i \leq n$, the rank (for the usual order) of $x_{\rho(i)}$ in $x$ is the rank of $y_{i}$ in $y$.

\begin{remark}
\label{densitylebesgue}
Note that in our model, all the probability distributions are absolutely continuous with respect to Lebesgue measure, thus the eigenvectors of $A$ and $B$ all have almost surely pairwise distinct coordinates.
\end{remark}

We recall that the aim is to infer the underlying permutation $\Pi$ given the observation of $A$ and $B$. We now introduce our simple spectral algorithm derived from \cite{Feizi16}, which we call \emph{EIG1}, that consists in computing and aligning the leading eigenvectors $v_1$ and $v'_1$ of $A$ and $B$. This very natural method can be thought of as the relaxation of the QAP formulation (\ref{QAP}) when reducing $A$ and $B$ to rank-one matrices $\lambda_1 v_1 v_1^T$ and $\lambda'_1 v'_1 v_1^{'T}$. Indeed, as soon as $v_1$ and $v'_1$ have pairwise distinct coordinates, is it easy to see that
\begin{equation*}
\argmax_{P \in \mathcal{S}_N} \langle \lambda_1 v_1 v_1^T, P \lambda'_1 v'_1 v_1^{'T} P^T \rangle = \argmax_{P \in \mathcal{S}_N} \pm v_1^T P v'_1  = \rho,
\end{equation*} where $\rho$ is the aligning permutation of $v_1$ and $\pm v'_1$. Computing the two normalized leading eigenvectors (i.e. corresponding to the highest eigenvalues) $v_1$ and $v'_1$ of $A$ and $B$, the \emph{EIG1} algorithm returns the aligning permutation of $v_1$ and $\pm  v'_1$. The method then decides which permutation to output according to the scores:

\begin{algorithm}[H]
	\label{algo_EIG1}
	\caption{\emph{EIG1} Algorithm for matrix alignment}
	\SetAlgoLined
	Compute $v_1$ a normalized leading eigenvector of $A$\;
	Compute $v'_1$ a normalized leading eigenvector of $B$\;
	Compute $\Pi_{+}$ the permutation aligning $v_1$ and $v'_1$\;
	Compute $\Pi_{-}$ the permutation aligning $v_1$ and $-v'_1$\;
	\eIf{$\langle A, \Pi_{+} B \Pi_{+}^T \rangle \geq \langle A, \Pi_{-} B \Pi_{-}^T \rangle$}{
		return $\Pi_{+}$}
	{
		return $\Pi_{-}$
	}
\end{algorithm}

The aim of this paper is to find the regime in which \emph{EIG1} achieves \emph{almost exact recovery}, i.e. recovers all but a vanishing fraction of nodes of the planted truth $\Pi$.

\section{Notations, main results and proof scheme}
\label{notations}
In this section we introduce some notations that will be used throughout this paper, we mention the main results and the proof scheme. 

\subsection{Notations} 
\label{notations_def}

\begin{itemize}
\item Recall that $A$ and $H$ are two $N \times N$ matrices drawn under model (\ref{GOEmodel}) here above. The matrix $B$ is defined as $\Pi^T \left(A+\sigma H\right) \Pi $, where $\Pi$ is a uniform $N \times N$ permutation matrix and $\sigma$ is the noise parameter, depending on $N$. 

\item In the following, $\left(v_1, v_2, \ldots, v_N\right)$ (resp. $\left(v'_1, v'_2, \ldots, v'_N\right)$) denote two orthonormal bases of eigenvectors of $A$ (resp. of $B$) with respect to the (real) eigenvalues $\lambda_1~\geq~\lambda_2~\geq~\ldots~\geq~\lambda_N$ of $A$ (resp. $\lambda'_1~\geq~\lambda'_2~\geq~\ldots~\geq~\lambda'_N$ of $B$). Through all the study, the sign of $v'_1$ is fixed such that $\langle \Pi v_1,v'_1 \rangle >0$.

\item Denote by $\| \cdot \|$ the euclidean norm of $\mathbb{R}^N$. Let $\langle \cdot, \cdot \rangle$ denote the corresponding inner product.

\item For any estimator $\hat{\Pi}$ of $\Pi$, define its overlap:
\begin{equation}
\label{overlap}
\mathcal{L}(\hat{\Pi},\Pi) := \frac{1}{N} \sum_{i=1}^{N} \mathbf{1}_{\hat{\Pi}(i)=\Pi(i)}.
\end{equation} This metric is used to quantify the quality of a given estimator of $\Pi$.

\item The equality $\overset{(d)}{=}$ will refer to equality in distribution. Some event $A_N$ is said to hold \textit{with high probability} (we will use the abbreviation "w.h.p."), if $\dP(A_N)$ converges to $1$ when $N \to \infty$.

\item For two random variables $u=u(N)$ and $v=v(N)$, we will use the notation $u=o_{\mathbb{P}}\left(v\right)$ if $\frac{u(N)}{v(N)} \overset{\mathbb{P}}{\longrightarrow} 0$ when $N \to \infty$. We also use this notation when $X=X(N)$ and $Y=Y(N)$ are $N-$dimensional random vectors: $X=o_{\mathbb{P}}\left(Y\right)$ if $\frac{\| X(N) \|}{\| Y(N) \|} \overset{\mathbb{P}}{\longrightarrow} 0$ when $N \to \infty$.

\item Define 
\begin{equation}
\label{nologfunctions}
\mathcal{F} := \left\lbrace f : \mathbb{N} \to \mathbb{R} \; | \; \forall t>0, N^t f(N) \to \infty, \frac{f(N)}{N^t} \to 0 \right\rbrace. 
\end{equation} For two random variables $u=u(N)$ and $v=v(N)$, $u \asymp v$ refers to equivalence with high probability up to some sub-polynomial factor, meaning that there exists a function $f \in \mathcal{F}$ such that
\begin{equation}
\label{defasymp}
\mathbb{P}\left(\frac{v(N)}{f(N)} \leq u(N) \leq f(N) v(N)\right) \to 1.
\end{equation}
\end{itemize}

Throughout the paper, all limits are taken when $N \to \infty$, and the dependency in $N$ will most of the time be eluded, as an abuse of notation.

\subsection{Main results, proof scheme} 
The result shown can be stated as follows: there exists a condition -- a threshold -- on $\sigma$ and $N$ under which the \emph{EIG1} method enables us to recover $\Pi$ almost exactly, in terms of the overlap $\mathcal{L}$ defined in (\ref{overlap}). Above this threshold, we show that \emph{EIG1} Algorithm cannot recover more than a vanishing part of $\Pi$. 

\begin{theorem}[Zero-one law for \emph{EIG1} method]
\label{theorem_01law}
For all $N$, $\Pi_N$ denotes an arbitrary permutation of size $N$, $\hat{\Pi}_N$ is the estimator obtained with Algorithm \emph{EIG1}, for $A$ and $B$ of model (\ref{GOEmodel}), with permutation $\Pi_N$ and noise parameter $\sigma$. We have the following zero-one law:
	\begin{itemize}
		\item[$(i)$] If there exists $\epsilon>0$ such that $\sigma = o(N^{-7/6-\epsilon})$ then $$\mathcal{L}(\hat{\Pi}_N,\Pi_N) \overset{L^1}{\longrightarrow} 1.$$
		\item[$(ii)$] If there exists $\epsilon>0$ such that $\sigma = \omega(N^{-7/6+\epsilon})$ then $$\mathcal{L}(\hat{\Pi}_N,\Pi_N) \overset{L^1}{\longrightarrow} 0.$$
	\end{itemize}
\end{theorem}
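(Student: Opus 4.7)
Since $B = \Pi^T (A+\sigma H)\Pi$, a leading eigenvector of $B$ is $v'_1 = \Pi^T \tilde v_1$, where $\tilde v_1$ is a leading eigenvector of $A+\sigma H$. With the sign convention $\langle \Pi v_1, v'_1\rangle > 0$, the aligning permutation produced by \emph{EIG1} coincides with $\Pi$ at index $i$ iff the rank of $(v_1)_i$ among the coordinates of $v_1$ equals the rank of $(\tilde v_1)_i$ among those of $\tilde v_1$. We may thus assume $\Pi = \id$ and compare the coordinate-wise orderings of $v_1$ and $\tilde v_1$ (the sign chosen by the algorithm via the score test must then be shown to be $\Pi_+$ w.h.p., which is a routine consequence of the correlation $\langle v_1,\tilde v_1\rangle \to 1$). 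Standard analytic perturbation theory gives
\[
\tilde v_1 - v_1 \;=\; \sigma \sum_{k\geq 2} \frac{\langle v_k, H v_1\rangle}{\lambda_1 - \lambda_k}\, v_k \;+\; O(\sigma^2),
\]
and the plan is to show that this sum is dominated by its $k=2$ mode, because $\lambda_1 - \lambda_2$ is the smallest spectral gap.

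\textbf{Identifying the threshold.} Three probabilistic facts, each holding with high probability up to subpolynomial factors, drive the analysis: (a) Tracy-Widom edge spacing gives $\lambda_1 - \lambda_2 \asymp N^{-2/3}$; (b) since $H$ is an independent GOE, $\langle v_2, H v_1\rangle$ is Gaussian with $\asymp N^{-1/2}$; (c) edge-eigenvector delocalization gives $|(v_2)_i| \asymp N^{-1/2}$. Combining yields $|(\tilde v_1 - v_1)_i| \asymp \sigma N^{-1/3}$. In parallel, $v_1$ is itself delocalized with entries of size $N^{-1/2}$ spread over $N$ positions, so its sorted coordinates have typical nearest-neighbor gap $\asymp N^{-3/2}$. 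The ratio $\sigma N^{-1/3}/N^{-3/2} = \sigma N^{7/6}$ is the typical rank shift of a coordinate, pinpointing the critical scale $\sigma_c = N^{-7/6}$.

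\textbf{Upper and lower bounds.} For $\sigma = o(N^{-7/6-\eps})$, the coordinate perturbation is smaller than the local gap by a polynomial factor. A quantitative concentration statement on the gap distribution of the sorted entries of $v_1$ shows that at most $o(N)$ coordinates have local gap $\ll \sigma N^{-1/3}$; all remaining indices keep their rank, so $\cL(\hat\Pi,\Pi) \to 1$ in $L^1$ (the overlap is bounded by $1$). For $\sigma = \omega(N^{-7/6+\eps})$, the dominant first-order contribution reads $\tilde v_1 \approx v_1 + c\, v_2$ with $|c| \asymp \sigma N^{2/3}$, and $v_2$ is "independent of the ordering induced by $v_1$" in the sense that its entries are delocalized and the conditional law of $v_2$ given $v_1$ is essentially uniform on the sphere of $v_1^\perp$. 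Sorting $v_1 + c v_2$ then shifts the rank of a typical coordinate by $\asymp \sigma N^{7/6}\to\infty$, so the positional agreement between the two orderings is $o(N)$, yielding $\cL(\hat\Pi,\Pi) \to 0$ in $L^1$.

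\textbf{Main obstacle.} The technical heart is making the scale analysis rigorous on three fronts. First, one must justify the first-order expansion by controlling the $O(\sigma^2)$ remainder and the contributions of modes $k\geq 3$ uniformly in $k$; this calls for quantitative rigidity estimates on all gaps $\lambda_1 - \lambda_k$ together with sharp moment bounds on the Gaussian scalars $\langle v_k, H v_1\rangle$. Second, the "$\asymp N^{-3/2}$" statement on sorted-coordinate gaps of $v_1$ must be upgraded into quantitative tail bounds on the number of indices with unusually small or unusually large local gap. The most delicate step is the lower bound: one must convert the coordinate-wise rank-shift heuristic into an actual overlap bound $\cL \to 0$ by giving a joint description of the orderings of $v_1$ and $v_1 + c\, v_2$ and showing that their coincidence set is vanishing, which is where the "independence" of $v_2$ from the ranking of $v_1$ needs to be formalized.
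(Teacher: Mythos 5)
Your overall strategy — a first-order perturbative expansion of the leading eigenvector, then a comparison of the per-coordinate perturbation $\sigma N^{-1/3}$ with the typical order-statistic gap $N^{-3/2}$ of $v_1$ — is the same as the paper's, and you land on the correct threshold $\sigma\asymp N^{-7/6}$. However, the structural claim that the expansion is ``dominated by its $k=2$ mode'' is neither true nor what is needed: all edge modes $k=O(1)$ contribute comparably to $\sum_{k\ge2}\langle v_k,Hv_1\rangle^2/(\lambda_1-\lambda_k)^2\asymp N^{1/3}$. The statement the paper actually proves and exploits (Lemma 3.2 and Proposition 2.2) is that, conditionally on $v_1$, the normalized perturbation direction is \emph{uniform on the sphere of $v_1^\perp$}, hence the perturbation is an honest independent Gaussian vector of norm $\asymp\sigma N^{1/6}$; this is what licenses the clean reduction to the i.i.d.\ toy model $\mathcal{J}(N,s)$. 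Your substitute, that $v_2$ is ``essentially independent of the ordering induced by $v_1$,'' is a different and unproven statement (and note the internal slip: your facts (a)--(b) give $|c|\asymp\sigma N^{1/6}$, not $\sigma N^{2/3}$).

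Two further gaps are genuine. First, rank preservation of a coordinate is not a nearest-neighbor, local-gap event: coordinate $i$ keeps its rank iff the number of coordinates crossing it from above equals the number crossing from below (the paper's $\mathcal{N}^+=\mathcal{N}^-$). For the positive direction one must control crossings by \emph{all} $N$ coordinates (the paper shows the expected number of crossings is $\asymp Ns\to0$, so $\mathbb{P}(\mathcal{N}^+=\mathcal{N}^-=0)\to1$); for the negative direction one must also rule out the cancellation events $\mathcal{N}^+=\mathcal{N}^-=k\ge1$, which is the content of the paper's multinomial estimate $M(N,s)\asymp(NS^+)^{-1/2}\to0$ combined with the combinatorial identity of Lemma 4.2 — this step is entirely absent from your sketch, and without it the overlap could in principle stay bounded away from $0$. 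Second, case $(ii)$ allows arbitrarily large $\sigma$ (e.g.\ $\sigma\gg N^{-1/6}$), where the first-order expansion — and even $\langle v_1,\tilde v_1\rangle\to1$ — fails; the paper handles this via the decomposition $\sigma^2=\sigma_1^2+\sigma_2^2$ and a coupling/monotonicity lemma showing the effective toy-model noise only increases. Relatedly, in case $(ii)$ the sign test cannot be dismissed as routine: one must separately show $\mathcal{L}(\Pi_-,\Pi)\to0$, since the algorithm may well return $\Pi_-$.
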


Results of Theorem \ref{theorem_01law} are illustrated on Figure \ref{image_overlap} showing the zero-one law at $\sigma \asymp N^{-7/6}$. Note that the convergence to the step function appears to be slow.

\begin{figure}[H]
	\centering
	\includegraphics[width=14cm,height=8cm]{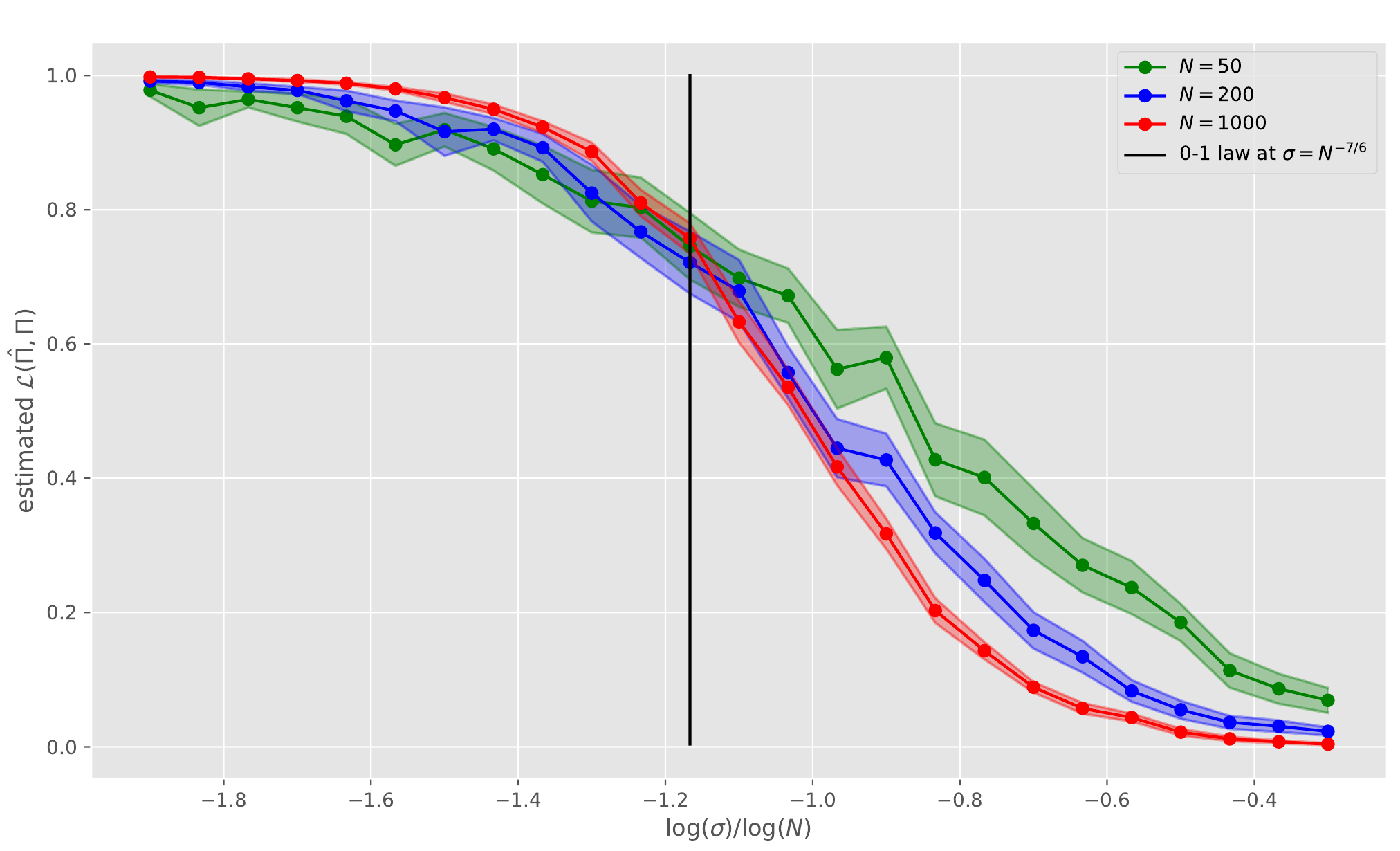}
	\caption{\label{image_overlap} Estimated overlap $\mathcal{L}(\hat{\Pi},\Pi)$ reached by \emph{EIG1} in model \eqref{GOEmodel}, for varying $N$ and $\sigma$. \footnotesize{With $95\%$ confidence intervals}.}
\end{figure}

\begin{remark}
	We can now underline that without loss of generality, we can assume that $\Pi = \mathrm{Id}$, the identity mapping. Indeed, one can return to the general case applying transformations $A \rightarrow \Pi A \Pi^T$ and $H \rightarrow \Pi H \Pi^T$. From now on we will assume in the rest of the paper that $\Pi = \mathrm{Id}$.
\end{remark}

In order to prove this theorem, it is necessary to establish two intermediate results along the way, which could also be of independent interest. First, we study the behavior of $v'_1$ with respect to $v_1$, showing that under some conditions on $\sigma$ and $N$, the difference $v_1 - v'_1$ can be approximated by a renormalized Gaussian standard vector, multiplied by a  variance term $\mathbf{S}$, where $\mathbf{S}$ is a random variable which behavior is well understood in terms of $N$ and $\sigma$ when $N \to \infty$. For this we work under the following assumption: 
\begin{equation}
\label{microscopicregime}
\exists \, \alpha >0, \;\sigma = o \left(N^{-1/2-\alpha}\right),
\end{equation} 

\begin{proposition} \label{prop_gaussian_decomp}
Under assumption \eqref{microscopicregime}, there exists a standard Gaussian vector $Z \sim \mathcal{N}\left(0,I_N\right)$ independent from $v_1$ and a random variable $\mathbf{S} \asymp \sigma N^{1/6}$, such that
	\begin{equation*}
	v'_1 = \left(1 + o_{\mathbb{P}}(1)\right) \left(v_1+ \mathbf{S} \frac{Z}{\|Z\|}\right).
	\end{equation*}
\end{proposition}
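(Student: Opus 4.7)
The natural starting point is the first-order eigenvector perturbation of $v'_1$ around $v_1$. Writing $(v_k, \lambda_k)$ for the eigenpairs of $A$, and using the simplicity of $\lambda_1$ together with the sign convention $\langle v_1, v'_1\rangle > 0$, Rayleigh--Schr\"odinger perturbation theory yields
\begin{equation*}
v'_1 \;=\; \frac{v_1 + W + R}{\|v_1 + W + R\|}, \qquad W \;:=\; \sigma \sum_{k \geq 2} \frac{\langle v_k, H v_1\rangle}{\lambda_1 - \lambda_k}\, v_k,
\end{equation*}
where $R$ gathers higher-order corrections. I would then prove three statements: (i)~$\|W\| \asymp \sigma N^{1/6}$, identifying $\|W\|$ with~$\mathbf{S}$; (ii)~its direction can be realized by a standard Gaussian $Z$ independent of $v_1$; and (iii)~$R = o_{\mathbb{P}}(\|W\|)$, so that after normalization $v'_1$ matches the announced expression up to a $(1 + o_{\mathbb{P}}(1))$ scalar.

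For~(i), conditioning on $A$ and using the covariance structure of $H$ gives that the scalars $g_k := \sqrt{N}\,\langle v_k, H v_1\rangle$, $k \geq 2$, are i.i.d.\ $\mathcal{N}(0,1)$; since this conditional law does not depend on~$A$, the $(g_k)$ are in fact independent of~$A$. Hence $\mathbf{S}^2 := \|W\|^2 = (\sigma^2/N)\sum_{k \geq 2} g_k^2/(\lambda_1 - \lambda_k)^2$. Invoking GOE rigidity at the soft edge, where $\lambda_1 - \lambda_k$ is sub-polynomially close to $c(k/N)^{2/3}$, together with standard $\chi^2$-concentration of the weighted sum, I would deduce $\mathbf{S} \asymp \sigma N^{1/6}$ in the sense of~\eqref{defasymp}.

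For~(ii), the $\mathrm{O}(N)$-invariance of the GOE implies that, conditionally on the spectrum and on~$v_1$, the eigenbasis $(v_2, \ldots, v_N)$ of~$v_1^\perp$ is Haar-uniform and independent of the~$(g_k)$. A short symmetry argument (the isotropy group of $v_1$ acts transitively on the unit sphere of $v_1^\perp$ and preserves the joint law) then shows that, conditionally on $v_1$, the direction $Y := W/\mathbf{S}$ is uniform on the unit sphere of~$v_1^\perp$ and independent of~$\mathbf{S}$. I would couple by drawing $\tau \sim \mathcal{N}(0,1)$ and $r \sim \chi_{N-1}$ independent of everything, and setting $Z := \tau\, v_1 + r\, Y$; by construction $Z$ is $\mathcal{N}(0, I_N)$ and independent of~$v_1$. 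The elementary bounds $|\tau|/\|Z\| = O_{\mathbb{P}}(N^{-1/2})$ and $r/\|Z\| = 1 - O_{\mathbb{P}}(N^{-1})$ then give $v_1 + \mathbf{S}\,Z/\|Z\| = v_1 + W + o_{\mathbb{P}}(\mathbf{S})$.

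The principal obstacle is step~(iii): the spectral gap $\lambda_1 - \lambda_2$ is only $\Theta(N^{-2/3})$, so a crude operator-norm bound on the Rayleigh--Schr\"odinger remainder does not close under~\eqref{microscopicregime}. I would bypass this by representing the spectral projector as $v'_1 (v'_1)^T = \frac{1}{2\pi i}\oint_\Gamma (zI - B)^{-1}\,dz$ along a small contour~$\Gamma$ enclosing only~$\lambda'_1$ (which is separated from the rest of the spectrum by rigidity), expanding $(zI - B)^{-1} = (zI - A)^{-1} \sum_{j \geq 0}\bigl(\sigma H (zI - A)^{-1}\bigr)^j$, and estimating each order using isotropic local laws for the resolvent of~$A$ near the edge. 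Careful accounting of the near-edge contribution gives $\|R\|$ of order $\sigma^2 N^{1/3}$ up to sub-polynomial factors, whence $\|R\|/\mathbf{S} \asymp \sigma N^{1/6} = o_{\mathbb{P}}(1)$ under~\eqref{microscopicregime}. This edge-resolvent estimate is the technical heart of the proof.
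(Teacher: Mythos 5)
Your steps (i) and (ii) essentially coincide with the paper's Lemma \ref{lemma_concentrationksi} (norm of the first-order term via rigidity and $\chi^2$-concentration) and Lemma \ref{lemma_invrotation} plus the sphere-to-Gaussian coupling. The real divergence is in how the two arguments deal with the higher-order remainder, and this is where your plan has a genuine gap. You apply the rotational-invariance argument to the \emph{first-order} term $W$ only, which forces you to control the remainder $R$ as a vector; you correctly identify this as the technical heart, but you then only assert the bound $\|R\|\lesssim \sigma^2 N^{1/3}$ via a contour-integral expansion of the spectral projector and isotropic local laws at the edge, none of which is carried out. The paper avoids this entirely: it applies rotational invariance to the \emph{exact} perturbation $w := w' - v_1$ (where $w'$ is the eigenvector of $B$ normalized to have $v_1$-component equal to $1$), which is exactly uniformly distributed in direction on the unit sphere of $v_1^{\perp}$ and independent of $\|w\|$. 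Consequently only the scalar $\|w\|$ needs to be estimated, and for that a norm-level comparison $\|w\| = (1+o_{\mathbb{P}}(1))\|W\|$ suffices; this is obtained by an elementary Picard-type iteration on the eigenvector equations (Proposition \ref{w'expansion}, Lemma \ref{propagation_picard}), which closes easily under the strong assumption \eqref{microscopicregime} ($\sigma = o(N^{-1/2-\alpha})$) without any resolvent machinery. Your heavier route would be the right one if you wanted the optimal threshold $\sigma = o(N^{-1/6-\alpha})$, but the proposition only claims the result under \eqref{microscopicregime}, so you are paying for generality the statement does not require, and paying with a step you have not actually proved.

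A second, related defect: even granting your remainder bound, your argument delivers $v'_1 = (1+o_{\mathbb{P}}(1))\left(v_1 + \mathbf{S}\,Z/\|Z\|\right) + E$ with an \emph{additive} vector error $\|E\| = o_{\mathbb{P}}(\mathbf{S})$, coming both from $R$ and from your coupling step $v_1 + \mathbf{S}Z/\|Z\| = v_1 + W + o_{\mathbb{P}}(\mathbf{S})$. The proposition asserts an exact scalar proportionality $v'_1 = (1+o_{\mathbb{P}}(1))(v_1+\mathbf{S}Z/\|Z\|)$, and this exactness is what is used downstream (the identity $r_1(\lambda T)=r_1(T)$ transfers the rank of a coordinate only under exact rescaling; an additive perturbation of norm $o_{\mathbb{P}}(\mathbf{S})$ spread over coordinates could a priori scramble ranks). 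The paper gets the exact form for free because it chooses $\mathbf{S}$ as an explicit function of $\|w\|$ and the auxiliary Gaussians so that the identity $v'_1 = c\,(v_1+\mathbf{S}Z/\|Z\|)$ holds \emph{exactly} with a scalar $c = 1+o_{\mathbb{P}}(1)$. To repair your version you would either need to mimic that exact construction or strengthen your remainder control to a coordinatewise statement, which your sketch does not address.
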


\begin{remark} \label{remark_assumption6} This assumption (\ref{microscopicregime}) (or a tighter formulation) arises when studying the diffusion trajectories of eigenvalues and eigenvectors in random matrices, and corresponds to the \textit{microscopic regime} in \cite{Allez14}. This assumption ensures that all eigenvalues of $B$ are close enough to the eigenvalues of $A$. This comparison term is justified from the random matrix theory ($N^{-1/2}$ is the typical amplitude of the spectral gaps $\sqrt{N}(\lambda_i - \lambda_{i+1})$ in the bulk, which are the smaller ones). 

Eigenvectors diffusions in similar models (diffusion processes dawn with the scaling $\sigma = \sqrt{t}$) are studied in \cite{Allez14}, where the main tool is the Dyson Brownian motion (see e.g. \cite{Anderson09}) and its formulation for eigenvectors trajectories, giving stochastic differential equations for the evolutions of $v'_j(t)$ with respect to vectors $v_i=v'_i(0)$. These equations lead to a system of stochastic differential equations for the overlaps $\langle v_i, v'_j(t) \rangle$, which is quite difficult to analyze rigorously. In this work a more elementary method to get a expansion of $v'_1$ around $v_1$, for which this very condition (\ref{microscopicregime}) also appears.

Note that here, spectral gaps at the edge are of order $N^{-1/6}$ so assumption \eqref{microscopicregime} may not optimal for our study, and we expect Proposition \ref{prop_gaussian_decomp} to hold up to $\sigma = o \left(N^{-1/6-\alpha}\right)$. However, since the positive result of Theorem \ref{theorem_01law} holds in a way more restrictive regime -- see condition $(i)$, condition \eqref{microscopicregime} is enough for our purpose and allows a short and simple proof.
\end{remark}

Proposition \ref{prop_gaussian_decomp} suggests the study of $v'_1$ as a Gaussian perturbation of $v_1$. The main question is now formulated as follows: \textit{what is the probability that the perturbation on $v_1$ has an impact on the overlap of the estimator $\hat{\Pi}$ from the \emph{EIG1} method?} To answer this question, we introduce a correlated Gaussian vectors model (or \emph{toy model} hereafter) of parameters $N$ and $s>0$. In this model, we draw a standard Gaussian vector $X$ of size $N$ and $Y= X + s Z$ where $Z$ is an independent copy of $X$. We will use the notation $(X,Y) \sim \mathcal{J}(N,s)$.

Define $r_1$ the function that associates to any vector $T=(t_1,\ldots,t_p)$ the rank of $t_1$ in $T$ (for the usual decreasing order). For $(X,Y) \sim \mathcal{J}(N,s)$ we evaluate
\begin{equation*}
p(N,s) := \mathbb{P}\left(r_1(X)=r_1(Y)\right).
\end{equation*} Our second result shows that there is a zero-one law for the property of rank preservation in the toy model $\mathcal{J}(N,s)$.

\begin{proposition}[Zero-one law for $p(N,s)$]
\label{prop_zero_one_toy}
In the correlated Gaussian vectors model we have the following:
	\begin{itemize}
		\item[$(i)$] If $s = o(1/N)$ then $$p(N,s) \underset{N \to \infty}{\longrightarrow} 1.$$
		\item[$(ii)$] If $s = \omega(1/N)$ then $$p(N,s) \underset{N \to \infty}{\longrightarrow} 0.$$
	\end{itemize}
\end{proposition}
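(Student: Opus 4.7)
The plan is to reduce the rank-preservation event to a disjunction of coordinate-wise sign flips. Writing $D_i := X_1 - X_i$ and $W_i := Z_1 - Z_i$ for $i \geq 2$, one has $Y_1 - Y_i = D_i + sW_i$, and $(D_i)_{i \geq 2}$ and $(W_i)_{i \geq 2}$ are independent families of centered Gaussians with variance $2$. The event $\{r_1(X) = r_1(Y)\}$ coincides with $\bigcap_{i=2}^N E_i^c$, where
$$E_i := \{\sign(D_i) \neq \sign(D_i + sW_i)\} = \{D_iW_i < 0,\; |D_i| < s|W_i|\}.$$
Since $D_i$ and $W_i$ are independent centered Gaussians with equal variance, $|D_i|/|W_i|$ is half-Cauchy while the signs of $D_i, W_i$ are independent fair coins, giving
$$\dP(E_i) \;=\; \frac{1}{2}\cdot\frac{2}{\pi}\arctan(s) \;=\; \frac{\arctan(s)}{\pi}.$$

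Part $(i)$ is then immediate from the union bound: $\dP\left(\bigcup_{i=2}^N E_i\right) \leq (N-1)\arctan(s)/\pi \leq Ns/\pi$, which tends to $0$ whenever $s = o(1/N)$, so $p(N,s) \to 1$.

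For part $(ii)$ the key observation is that, conditionally on $(X_1, Z_1)$, the pairs $(X_i, Z_i)_{i \geq 2}$ are i.i.d., hence so are the events $(E_i)_{i \geq 2}$. Setting $q_s(X_1, Z_1) := \dP(E_2 \mid X_1, Z_1)$, the number of flips $T := \sum_{i=2}^N \mathbf{1}_{E_i}$ is thus conditionally Binomial$(N-1, q_s(X_1, Z_1))$, and
$$p(N,s) \;=\; \dP(T = 0) \;=\; \dE\left[(1 - q_s(X_1,Z_1))^{N-1}\right] \;\leq\; \dE\left[\exp\bigl(-(N-1)q_s(X_1,Z_1)\bigr)\right].$$
It therefore suffices to prove that $(N-1) q_s(X_1, Z_1) \to +\infty$ almost surely, and to conclude by dominated convergence (with dominating function $1$). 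To lower bound $q_s$, I would write it as an explicit double Gaussian integral and restrict, say, to $\{W_2 \in [1,2]\}$ combined with a window $D_2 \in (-sW_2, 0)$; elementary bounds on the Gaussian density then yield $q_s(X_1, Z_1) \geq c(X_1, Z_1)\min(s, 1)$ for some almost-surely positive function $c$. Combined with the assumption $s = \omega(1/N)$, this gives the required a.s.~divergence. The main obstacle is precisely this lower bound on $q_s$: it is elementary but requires some care in the regime $s \to 0$, where one must verify that the linear-in-$s$ approximation holds with an almost-surely positive pre-factor (noting that the monotonicity of $s \mapsto E_i$ makes the case $s$ bounded away from $0$ much easier, as one can lower bound $q_s$ by a fixed $q_{s_0} > 0$ a.s.).
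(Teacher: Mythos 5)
There is a genuine gap, and it sits at the very first step: the identity
$\{r_1(X)=r_1(Y)\}=\bigcap_{i=2}^N E_i^c$ is false. Rank preservation does \emph{not} require that no pairwise comparison flips; it only requires that the number of coordinates crossing $X_1$ downwards equals the number crossing it upwards. In the notation of the paper, with $\cN^{+}$ and $\cN^{-}$ evaluated at $(X_1,Y_1)$, one has $\{r_1(X)=r_1(Y)\}=\{\cN^{+}=\cN^{-}\}$, whereas your $\bigcap_i E_i^c$ is the strictly smaller event $\{\cN^{+}=\cN^{-}=0\}$ (note $T=\cN^{+}+\cN^{-}$). Consequently only the inclusion $\bigcap_i E_i^c\subseteq\{r_1(X)=r_1(Y)\}$ holds, i.e. $\dP(T=0)\le p(N,s)$. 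Your part $(i)$ survives, because there the inclusion points in the useful direction: the union bound $\dP(\bigcup_i E_i)\le (N-1)\arctan(s)/\pi\le Ns/\pi\to 0$ gives $p(N,s)\to 1$, and this is a clean, correct argument (essentially the same mechanism as the paper's bound via $(1-S^{+}-S^{-})^{N-1}$). But part $(ii)$ collapses: showing $\dP(T=0)=\dE[(1-q_s)^{N-1}]\to 0$ only shows that flips occur, not that the up-flips and down-flips fail to cancel. When $sN\to\infty$ both $\cN^{+}$ and $\cN^{-}$ are large, and the whole content of the statement is that two such counts coincide with vanishing probability.

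The missing ingredient is an anti-concentration (local-limit) estimate for the event $\{\cN^{+}=\cN^{-}\}$. Conditionally on $(X_1,Y_1)=(x,y)$, the pair $(\cN^{+},\cN^{-})$ is multinomial with cell probabilities $S^{+},S^{-}$, and one must show that $\dP(\cN^{+}=\cN^{-})\to 0$ when $NS^{+}\to\infty$; this is what the paper does by bounding each term of $\sum_k\binom{N-1}{k}\binom{N-1-k}{k}(S^{+})^k(S^{-})^k(1-S^{+}-S^{-})^{N-1-2k}$ by the mode of the marginal binomial, which is of order $(NS^{+})^{-1/2}$, and then controlling the remaining combinatorial sum via the identity of Lemma \ref{fibo}. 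Without an argument of this type (or some other proof that a sum of i.i.d.\ $\{-1,0,+1\}$ increments with diverging variance hits $0$ with vanishing probability), your part $(ii)$ does not establish the claim. The technical point you flag as the main obstacle (lower-bounding $q_s$ linearly in $s$) is real but secondary; the decisive issue is the reduction of $p(N,s)$ to $\dP(T=0)$.
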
 These results are illustrated on Figure \ref{image_jouet}, showing the zero-one law at $s \asymp N^{-1}$.

\begin{figure}
	\centering
	\includegraphics[width=14cm,height=8cm]{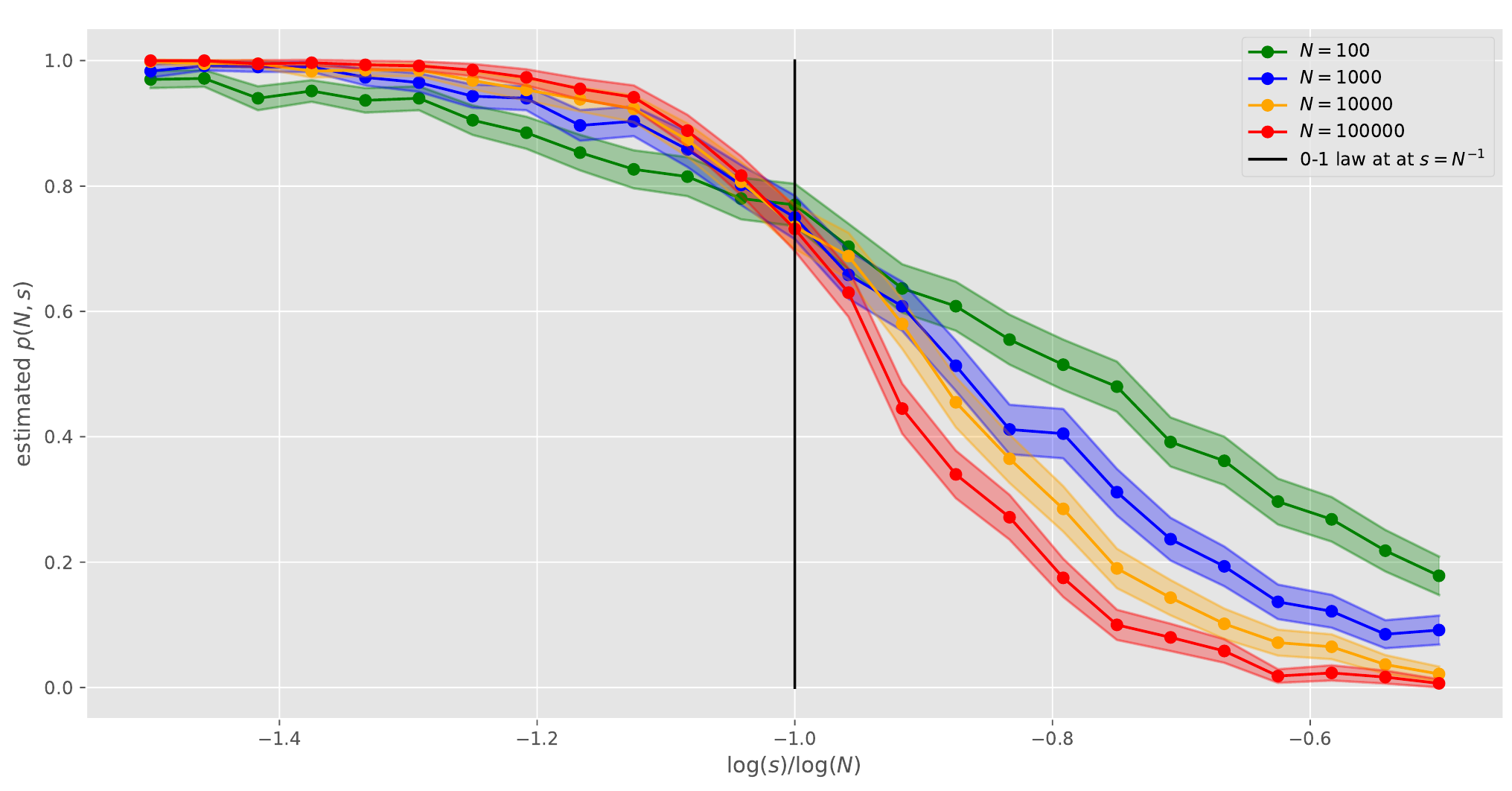}
	\caption{\label{image_jouet} Estimated $p(N,s)$ in the toy model $\mathcal{J}(N,s)$. \footnotesize{With $95\%$ confidence intervals}.}
\end{figure}

\paragraph*{Paper organization} The gaussian approximation of $v_1 - v'_1$ is established in Section \ref{linkGOEtoy} with the proof of Proposition \ref{prop_gaussian_decomp}. The toy model defined here above is studied in Section \ref{toymodel} where Proposition \ref{prop_zero_one_toy} is established. Finally, we gather results of Propositions \ref{prop_gaussian_decomp} and \ref{prop_zero_one_toy} in Section \ref{EIG1threshold} to show Theorem \ref{theorem_01law}. Some additional proofs are deferred to Appendices \ref{section3_add_proofs} and \ref{section45_add_proofs}.

\section{Behavior of the leading eigenvectors of correlated matrices}
\label{linkGOEtoy}
The main idea of this section is to find a first order expansion of $v'_1$ around $v_1$. Recall that we use the notations $\left(v_1, v_2, \ldots, v_N\right)$ for normalized eigenvectors of $A$, corresponding to the eigenvalues $\lambda_1~\geq~\lambda_2~\geq~\ldots~\geq~\lambda_N$. Similarly, $\left(v'_1, v'_2, \ldots, v'_N\right)$ and $\lambda'_1~\geq~\lambda'_2~\geq~\ldots~\geq~\lambda'_N$ will refer to eigenvectors and eigenvalues of $B = A + \sigma H$. Since $A$ and $B$ are symmetric, all these eigenvalues are real and the vectors $\left\{v_i\right\}_i$ (resp. $\left\{v'_i\right\}_i$) are pairwise orthogonal. We also recall that $v'_1$ is taken such that $\langle v_1,v'_1 \rangle >0$.

\subsection{Computation of a leading eigenvector of $B$}
Recall now that we are working under assumption \eqref{microscopicregime}:
\begin{equation*}
\exists \, \alpha >0, \;\sigma = o \left(N^{-1/2-\alpha}\right).
\end{equation*}  Let $w'$ be an (non normalized) eigenvector of $B$ for the eigenvalue $\lambda'_1$ of the form
\begin{equation*}
w' := \sum_{i=1}^{N} \theta_i v_i,
\end{equation*}where we assume that $\theta_1 =1$. Such an assumption can be made a.s. since any hyperplane of $\mathbb{R}^N$ has a null Lebesgue measure in $\mathbb{R}^N$ (see Remark \ref{densitylebesgue}). 

The defining eigenvector equations projected on vectors $v_i$ give
\begin{equation} \label{eq_system_eigenequations}
\left \{
\begin{array}{c c c}
\theta_1 & = & 1,\\
\forall i>1, \; \theta_i &  = &  \dfrac{\sigma}{\lambda'_1 - \lambda_i} \sum_{j=1}^{N} \theta_j \langle H v_j,v_i \rangle, \\
\lambda'_1 - \lambda_1 &  = & \sigma  \sum_{j=1}^{N} \theta_j \langle H v_j,v_1 \rangle. \\
\end{array}
\right.
\end{equation} The strategy is then to approximately solve \eqref{eq_system_eigenequations} with an iterative scheme, leading to the following expansion:

\begin{proposition}
	\label{w'expansion}
	Under the assumption \eqref{microscopicregime} one has the following:
	\begin{equation}
	\label{w'}
	w' = v_1 +  \sigma \sum_{i=2}^{N} \frac{\langle Hv_i,v_1 \rangle }{\lambda_1 - \lambda_i} v_i + o_{\mathbb{P}}\left(\sigma \sum_{i=2}^{N} \frac{\langle Hv_i,v_1 \rangle }{\lambda_1 - \lambda_i} v_i  \right).
	\end{equation}
\end{proposition}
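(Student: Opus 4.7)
I would view \eqref{eq_system_eigenequations} as a fixed-point equation for $(\theta_i)_{i \geq 2}$ and solve it by iteration, starting from $\theta_i^{(0)} = 0$ for $i \geq 2$ and keeping $\theta_1 = 1$ throughout. Plugging the seed into the second line of \eqref{eq_system_eigenequations} produces the first iterate
\[
\theta_i^{(1)} \;:=\; \frac{\sigma\,\langle Hv_1, v_i\rangle}{\lambda_1 - \lambda_i}, \qquad i \geq 2,
\]
which already reproduces the leading correction announced in \eqref{w'}. The task then splits into (a) justifying the replacement of $\lambda'_1 - \lambda_i$ by $\lambda_1 - \lambda_i$ in the denominators, and (b) showing that the higher iterates are negligible.

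For (a), the third line of \eqref{eq_system_eigenequations} together with $\theta_1 = 1$ and an a priori smallness of the $\theta_i$ (obtained by bootstrapping with the first iterate) yields $\lambda'_1 - \lambda_1 = \sigma\langle Hv_1, v_1\rangle(1 + o_{\mathbb{P}}(1))$, of size $\sigma / \sqrt{N}$ w.h.p. Since the top GOE spacings satisfy $\lambda_1 - \lambda_i \asymp (i/N)^{2/3}$ with high probability, and $\sigma/\sqrt{N} \ll N^{-2/3}$ under \eqref{microscopicregime}, replacing $\lambda'_1 - \lambda_i$ by $\lambda_1 - \lambda_i$ in every denominator costs only a $(1 + o_{\mathbb{P}}(1))$ factor.

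For (b), I would use that, conditional on $A$, the matrix $H$ in the eigenbasis of $A$ is again GOE, so the entries $\sqrt{N}\,\langle Hv_1, v_i\rangle$ for $i \geq 2$ are essentially i.i.d.\ $\mathcal{N}(0,1)$. Combined with the edge spacing this gives $\|\theta^{(1)}\|^2 \asymp \sigma^2 N^{-1}\sum_{i \geq 2}(\lambda_1 - \lambda_i)^{-2} \asymp \sigma^2 N^{1/3}$, hence $\|\theta^{(1)}\| \asymp \sigma N^{1/6}$ (matching the variance $\mathbf{S}$ in Proposition~\ref{prop_gaussian_decomp}). The next correction equals $\sigma D \widetilde H \theta^{(1)}$, where $D := \diag((\lambda_1 - \lambda_i)^{-1})_{i \geq 2}$ and $\widetilde H := (\langle Hv_j, v_i\rangle)_{i,j \geq 2}$. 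Crucially, conditional on $A$ the block $\widetilde H$ is independent of the first row $(\langle Hv_1, v_j\rangle)_{j \geq 2}$, hence of $\theta^{(1)}$; a Gaussian concentration argument on the resulting quadratic form then yields $\|\theta^{(2)} - \theta^{(1)}\| \asymp \sigma N^{1/6}\cdot\|\theta^{(1)}\| = o_{\mathbb{P}}(\|\theta^{(1)}\|)$ under \eqref{microscopicregime}. Iterating this bound shows that the scheme converges to a solution of \eqref{eq_system_eigenequations}, and that the total remainder is absorbed into the $o_{\mathbb{P}}$ term of \eqref{w'}.

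The main obstacle is precisely this last step. The naive operator-norm estimate $\|\sigma D \widetilde H\|_{\mathrm{op}} \asymp \sigma N^{2/3}$ is \emph{not} small under \eqref{microscopicregime}, so the iteration does not contract in operator norm; one must exploit the typical (rather than worst-case) action of $\widetilde H$ on the specific, conditionally independent vector $\theta^{(1)}$, trading the factor $N^{2/3}$ for $N^{1/6}$. This probabilistic gain, together with the matching sharp control of the edge spacings $\lambda_1 - \lambda_i$, is the technical heart of the argument.
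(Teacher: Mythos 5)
Your overall strategy coincides with the paper's: both set up the fixed-point iteration \eqref{schemadepicard} with seed $\theta_i^{(0)}=0$, identify the first iterate $\theta_i^{(1)}=\sigma\langle Hv_1,v_i\rangle/(\lambda_1-\lambda_i)$ as the leading correction, control the eigenvalue shift $\lambda'_1-\lambda_1\asymp\sigma/\sqrt N\ll\lambda_1-\lambda_2\asymp N^{-2/3}$ to freeze the denominators, and then argue that higher iterates are negligible. Your part (a) is essentially the paper's Lemma \ref{propagation_picard}$(ii)$ combined with Lemma \ref{lemma_controletrousp}, and your variance computation $\|\theta^{(1)}\|\asymp\sigma N^{1/6}$ matches Lemma \ref{lemma_concentrationksi}.

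The divergence, and the gap, is in step (b). You correctly observe that the operator-norm bound $\sigma\|D\widetilde H\|_{\mathrm{op}}\asymp\sigma N^{2/3}$ does not contract under \eqref{microscopicregime}, and you propose to recover a factor $\sigma N^{1/6}$ by exploiting that $\widetilde H$ is conditionally independent of $\theta^{(1)}$. That argument is valid for the \emph{first} increment $\theta^{(2)}-\theta^{(1)}=\sigma D\widetilde H\theta^{(1)}$, but it cannot be "iterated" as you claim: $\theta^{(2)}-\theta^{(1)}$ is a function of $\widetilde H$, so for $\theta^{(3)}-\theta^{(2)}=\sigma D\widetilde H(\theta^{(2)}-\theta^{(1)})+\cdots$ the conditional-Gaussian computation no longer applies, and falling back on the operator norm reinstates the non-contracting factor $\sigma N^{2/3}$. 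Making the probabilistic gain survive the iteration would require a genuinely different device (e.g.\ a resolvent expansion with isotropic local laws), which is precisely why the paper expects but does not prove the result under the weaker assumption $\sigma=o(N^{-1/6-\alpha})$ (Remark \ref{microscopicregimenotoptimal}).

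The paper avoids this obstacle by contracting in the $\ell^1$ norm $\Delta_k=\sum_{i\ge2}|\theta_i^k-\theta_i^{k-1}|$ rather than in $\ell^2$ or operator norm: on the high-probability event where $\sup_{i,j}|\langle Hv_j,v_i\rangle|\le C\sqrt{\log N/N}$ and $\sum_{j\ge2}(\lambda_1-\lambda_j)^{-1}=O(N^{1+\delta})$ (Lemma \ref{lemma_sommevp1}), one gets the deterministic recursion $\Delta_k\lesssim\sigma N^{1/2+\delta}\sqrt{\log N}\,\Delta_{k-1}+\cdots$, whose rate is $o(1)$ exactly because of the full strength of assumption \eqref{microscopicregime}. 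This bound is crude (it is what forces the $N^{-1/2-\alpha}$ threshold) but it iterates without any independence requirement, and a final $\ell^2$ comparison (step $(iv)$ of Lemma \ref{propagation_picard}, via Lemma \ref{lemma_sommevp2}) converts the $\ell^1$ control into the $o_{\mathbb P}$ statement of \eqref{w'}. To repair your write-up within the stated hypothesis, you should replace your conditional-independence contraction by such a sup-norm/$\ell^1$ argument, or else supply the missing mechanism that propagates the probabilistic gain past the first iteration.
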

We refer to Appendix \ref{appendix_proof_prop_w'} for the details regarding the definition of the mentioned iterative scheme, as well as a proof of Proposition \ref{w'expansion}. The proof uses assumption \eqref{microscopicregime} an builds upon some standard results on the distribution of eigenvalues in the \emph{GOE}.

\begin{remark}
\label{microscopicregimenotoptimal}
The above proposition could easily be extended for all eigenvectors of $B$, under assumption \eqref{microscopicregime}.
Based on the studies of the trajectories of the eigenvalues and eigenvectors in the GUE \cite{Allez14} and the GOE \cite{Allez14bis}, since we are only interested here in the leading eigenvectors, we expect the result of Proposition \ref{w'expansion} to hold under the weaker assumption $\sigma N^{1/6+\alpha} \to 0$,  for $N^{-1/6}$ is the typical spectral gap $\sqrt{N}(\lambda_1-\lambda_2)$ on the edge. However, as explained before (see Remark \ref{remark_assumption6}), our analysis doesn't require this more optimal assumption. We also know that the expansion \eqref{w'} doesn't hold as soon as $\sigma = \omega(N^{-1/6})$. A result proved by Chatterjee (\cite{Chatterjee14}, Theorem 3.8) shows that the eigenvectors corresponding to the highest eigenvalues $v_1$ of $A$ and $v'_1$ of $B=A+\sigma H$, when $A$ and $H$ are two independent matrices from the GUE, are delocalized (in the sense that $\langle v_1,v'_1 \rangle$ converges in probability to $0$ as $N \to \infty$), when $\sigma = \omega(N^{-1/6})$.
\end{remark}

\subsection{Gaussian representation of $v'_1 - v_1$}
We still work under assumption (\ref{microscopicregime}). After renormalization, we have $v'_1 = \frac{w'}{\| w' \|}$. We are now able to study the behavior of the overlap $\langle v'_1, v_1 \rangle$:
\begin{equation*}
\langle v'_1, v_1 \rangle = \left(1+\sigma^2 (1+o_{\mathbb{P}}(1)) \sum_{i=2}^{N} \dfrac{\langle H v_i,v_1 \rangle ^2 }{\left(\lambda_1 - \lambda_i\right)^2}\right)^{-1/2}
\end{equation*}
Hence
\begin{equation}
\label{eq_diffusion_vp}
\langle v'_1, v_1 \rangle = 1-\frac{\sigma^2}{2} \sum_{i=2}^{N} \frac{\langle H v_i,v_1 \rangle ^2 }{\left(\lambda_1 - \lambda_i\right)^2} + o_{\mathbb{P}}\left(\sigma^2 \sum_{i=2}^{N} \frac{\langle H v_i,v_1 \rangle ^2 }{\left(\lambda_1 - \lambda_i\right)^2} \right).
\end{equation} 

Let us give the heuristic to evaluate the first sum in the right-hand side of \eqref{eq_diffusion_vp}: since the GOE distribution is invariant by rotation (see e.g. \cite{Anderson09}), the random variables $\langle H v_i,v_1 \rangle$ are zero-mean Gaussian, with variance $1/N$. Moreover, it is well known \cite{Anderson09} that the eigenvalue gaps $\lambda_1 - \lambda_i$ are of order $N^{-1/6}$ when $i$ is small, and $N^{-1/2}$ in the bulk (when $i$ is typically of order $N$). These considerations lead to the following:
\begin{lemma}
	\label{lemma_concentrationksi}
	We have the following concentration
	\begin{equation}
	\label{lemma_concentrationksieq}
	\sum_{i=2}^{N} \frac{\langle H v_i,v_1 \rangle ^2 }{\left(\lambda_1 - \lambda_i\right)^2} \asymp N^{1/3}.
	\end{equation}
\end{lemma}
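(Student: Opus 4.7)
The plan is to use the rotational invariance of the GOE to reduce the sum to a weighted chi-square with weights governed by the eigenvalues of $A$, and then combine classical eigenvalue rigidity with an elementary concentration argument. First, I would exploit that $A$ and $H$ are independent and that the GOE is invariant under orthogonal conjugation: letting $V$ be the orthogonal matrix with columns $v_1,\ldots,v_N$, the matrix $V^T H V$ is, conditionally on $A$, again a GOE. This immediately yields that the variables $\xi_i := \sqrt{N}\,\langle H v_i, v_1\rangle$ for $i \geq 2$ are, conditionally on $A$, i.i.d.\ $\mathcal{N}(0,1)$ and jointly independent of the spectrum $\{\lambda_j\}$. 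Setting $c_i := (\lambda_1 - \lambda_i)^{-2}$, the quantity to control becomes $T := \frac{1}{N}\sum_{i=2}^N c_i \xi_i^2$, a weighted $\chi^2_1$ sum given $A$.

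The next step is to estimate $\sum_{i \geq 2} c_i$ using classical eigenvalue rigidity for the GOE (see e.g.\ \cite{Anderson09}). With high probability, $\lambda_1 - \lambda_i \asymp \min\{1,(i/N)^{2/3}\}$ uniformly in $i$: the classical Wigner location gives $2-\gamma_i \asymp (i/N)^{2/3}$ for $i \leq \delta N$, and rigidity controls the fluctuations of $\lambda_i$ around $\gamma_i$ at the correct scale, while for $i$ in the bulk $\lambda_1 - \lambda_i$ is $\Theta(1)$. Splitting the sum at $i = \delta N$ yields
\[
\sum_{i=2}^{N} c_i \;\asymp\; N^{4/3} \sum_{i=2}^{\delta N} i^{-4/3} + O(N) \;\asymp\; N^{4/3},
\]
since $\sum i^{-4/3}$ converges and the edge contribution dominates. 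Hence $\mathbb{E}[T \mid A] \asymp N^{1/3}$ with high probability.

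To pass from the conditional expectation to $T$ itself (up to sub-polynomial factors), I would use a two-sided argument. The upper bound follows by Markov applied conditionally on $A$: for any $\epsilon>0$, $\mathbb{P}(T > N^{\epsilon}\,\mathbb{E}[T\mid A] \mid A) \leq N^{-\epsilon}$, giving $T \leq N^{1/3+\epsilon}$ w.h.p. For the matching lower bound, rather than appealing to standard chi-square concentration (which is not sharp enough because the weights $c_i$ are extremely uneven and the top term alone carries a constant fraction of $\mathbb{E}[T\mid A]$), I would simply extract the leading term: $T \geq c_2 \xi_2^2 / N$, with $c_2 \asymp N^{4/3}$ from the rigidity estimate on the top gap $\lambda_1-\lambda_2 \asymp N^{-2/3}$, and $\xi_2^2 \geq N^{-\epsilon}$ w.h.p.\ for any $\epsilon>0$ since $\mathbb{P}(\xi_2^2 < t) = O(\sqrt{t})$. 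This yields $T \geq N^{1/3-\epsilon'}$ w.h.p., completing the $\asymp N^{1/3}$ statement.

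The main obstacle is the eigenvalue input: one needs both an upper bound on $\lambda_1-\lambda_2$ at the Tracy--Widom scale $N^{-2/3}$ (for the lower bound on $T$) and, crucially, uniform lower bounds on $\lambda_1-\lambda_i$ at the correct order $(i/N)^{2/3}$ across the edge regime (to bound $\mathbb{E}[T\mid A]$ from above). While both ingredients are standard consequences of the Erd\H{o}s--Schlein--Yau rigidity machinery for Wigner matrices, some care is needed to handle the edge-to-bulk transition uniformly in $i$ and to combine the per-$i$ bad events into a single w.h.p.\ statement, which is why the full verification is naturally deferred to an appendix.
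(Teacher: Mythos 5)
Your proposal is correct, and while the setup matches the paper's, the concentration step takes a genuinely different route. Like the paper, you use rotational invariance of the GOE to write the sum as $\frac{1}{N}\sum_{i\geq 2} c_i\xi_i^2$ with $c_i=(\lambda_1-\lambda_i)^{-2}$ and $\xi_i$ i.i.d.\ standard Gaussians independent of the spectrum, and your estimate $\sum_{i\geq 2}c_i\asymp N^{4/3}$ is exactly Lemma \ref{lemma_sommevp2}, proved by the same rigidity-plus-edge-splitting you sketch. The difference is in how you pass from the conditional mean to the sum itself: the paper runs a second-moment (Chebyshev) argument on $M_N=\sum_i((\langle Hv_i,v_1\rangle^2-1/N)/(\lambda_1-\lambda_i)^2)$, which requires the additional fourth-power estimate $\sum_{i\geq 2}(\lambda_1-\lambda_i)^{-4}\asymp N^{8/3}$ (equation (\ref{controlevp4})) but yields the stronger conclusion that the sum is $(1+o_{\mathbb{P}}(1))\cdot\frac{1}{N}\sum_i c_i$; you instead use conditional Markov for the upper bound and extract the single $i=2$ term for the lower bound, which avoids the fourth-moment eigenvalue input entirely at the price of only getting the two-sided bound up to sub-polynomial factors --- which is all that the statement (and its uses elsewhere in the paper) requires. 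Two details to tighten in a full write-up: the upper bound should be stated with a single sub-polynomial threshold (e.g.\ Markov at level $\log N$) to match the definition of $\asymp$ in (\ref{defasymp}), rather than ``for each $\epsilon$'' separately; and the lower bound on $c_2$ needs the upper bound $\lambda_1-\lambda_2\leq N^{-2/3}\cdot(\log N)^{O(\log\log N)}$, which comes from the Tracy--Widom edge statistics together with rigidity (as in Lemma \ref{lemma_controletrousp} and (\ref{erdos})), not from rigidity alone, since the rigidity error at $j=O(1)$ is of the same order as the gap itself.
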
 We refer to Appendix \ref{proof_lemma_concentrationksi} for a rigorous proof of this result. With this Lemma, we are now able to give the first order expansion of $\langle v'_1, v_1 \rangle$ with respect to $\sigma$:
\begin{equation}
\label{eq_diffusion_vp_2}
\langle v'_1, v_1 \rangle = 1-\frac{\sigma^2}{2} N^{1/3} + o_{\mathbb{P}}\left(\sigma^2 N^{1/3} \right).
\end{equation} 
\begin{remark}
The comparison between $\sigma $ and $N^{1/6}$ made in \cite{Chatterjee14} naturally reappears here, as $\sigma^2 N^{1/3}$ is the typical shift of $v'_1$ with respect to $v_1$.
\end{remark}

The intuition is that the scalar product $\langle v'_1, v_1 \rangle$ is sufficient to derive a Gaussian representation of $v'_1$ w.r.t. $v_1$. We formalize this in the following
\begin{lemma}
	\label{lemma_invrotation}
	Given $v_1$, when writing the decomposition $w' = v_1 + w$, with
	\begin{equation*}
	w :=\sum_{i=2}^{N} \theta_i v_i,
	\end{equation*}
	the distribution of $w$ is invariant by rotation in the orthogonal complement of $v_1$. This implies in particular that given $v_1$, $\|w\|$ and $\frac{w}{\|w\|}$ are independent, and that $\frac{w}{\|w\|}$ is uniformly distributed on $\mathbb{S}^{N-2}$, the unit sphere of $v_1^\perp$.
\end{lemma}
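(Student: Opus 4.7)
The plan is to establish the rotational invariance of the conditional law of $w$ given $v_1$ by leveraging the rotational invariance of the GOE, and then to derive the two stated consequences from a standard property of rotation-invariant random vectors. Fix an orthogonal matrix $R$ of $\mathbb{R}^N$ satisfying $Rv_1 = v_1$ (equivalently $R^T v_1 = v_1$). I would track the effect of the substitution $(A,H)\mapsto(R^T A R,\,R^T H R)$ and show that it both preserves the conditional joint law of $(A,H)$ given $v_1$ and maps $w$ to $R^T w$; these two facts together will force $R^T w \overset{(d)}{=} w$ conditionally on $v_1$.

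For the invariance of $(A,H)$, I would use the standard description of the GOE via its spectrum and eigenbasis: the orthonormal eigenbasis of $A$ is Haar-distributed on $O(N)$ (with a sign convention), independently of the eigenvalues. Conditioning on $v_1$, the family $(v_2,\ldots,v_N)$ remains a Haar-distributed orthonormal basis of $v_1^\perp$, so that $A \overset{(d)}{=} R^T A R$ given $v_1$. Since $H$ is GOE and independent of $A$, it is also invariant under $H\mapsto R^T H R$ and remains independent of $A$, which gives $(A,H)\overset{(d)}{=}(R^T A R,\,R^T H R)$ conditionally on $v_1$. For the equivariance of $w$, observe that $B=A+\sigma H$ becomes $R^T B R$, whose leading eigenvector (with the sign convention $\langle v_1,v'_1\rangle>0$, preserved because $R^T v_1=v_1$) is $R^T v'_1$. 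The normalization $\theta_1=\langle w',v_1\rangle=1$ forces $w' = v'_1/\langle v'_1,v_1\rangle$; after substitution the renormalized eigenvector is $R^T v'_1/\langle R^T v'_1,v_1\rangle = R^T w'$, hence $w=w'-v_1$ becomes $R^T w'-v_1 = R^T(w'-v_1) = R^T w$, again using $R^T v_1=v_1$. Combined with the previous step, this yields $R^T w \overset{(d)}{=} w$ conditionally on $v_1$ for every orthogonal $R$ fixing $v_1$.

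The stated consequences then follow from a textbook fact: any random vector $W$ in a Euclidean space whose law is invariant under all rotations has $\|W\|$ and $W/\|W\|$ independent, with the angular part uniform on the unit sphere. Indeed, one checks on product sets that $\mathbb{P}(\|W\|\in I,\, W/\|W\|\in B)=\mathbb{P}(\|W\|\in I,\, W/\|W\|\in RB)$ for every orthogonal $R$, which forces the conditional law of the angular part given $\|W\|$ to be rotation invariant (hence uniform on the sphere) and independent of $\|W\|$. Applied to $w$ in $v_1^\perp\simeq\mathbb{R}^{N-1}$, this gives the independence of $\|w\|$ and $w/\|w\|$ together with uniformity of $w/\|w\|$ on $\mathbb{S}^{N-2}$. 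The main subtlety lies in the first step: since the rotation $R$ is allowed to depend on $v_1$, one cannot directly invoke the unconditional rotational invariance of $(A,H)$; the Haar description of the remaining eigenvectors conditionally on $v_1$ is precisely what justifies pushing the conjugation through, and once this is in place the equivariance of $w$ is a short algebraic check.
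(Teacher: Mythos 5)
Your proof is correct and follows essentially the same route as the paper: conjugation by an orthogonal transformation fixing $v_1$, invariance of the (conditional) law of $(A,H)$ justified via the Haar distribution of the remaining eigenvectors, equivariance of the normalized eigenvector $w'$, and the standard norm/direction independence for rotation-invariant vectors. Your explicit check that the normalization $\theta_1=1$ and the sign convention are preserved under the conjugation is a welcome point of care, but the argument is the same as the paper's.
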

\begin{proof}[Proof of Lemma \ref{lemma_invrotation}]
	We work conditionnally on $v_1$. Let $O$ be an orthogonal transformation of the hyperplane $v_1^{\perp}$ (such that $Ov_1=v_1$). Since the \emph{GOE} distribution is invariant by rotation and $A$ and $H$ are independent, $\widetilde{B} := O^TAO + \sigma O^THO$ has he same distribution as $B = A+\sigma H$.
	
	Note that $Ow' = v_1 + Ow$ is an eigenvector of $\widetilde{B}$ for the eigenvalue $\lambda_{1}$. Since the distribution of the matrix of eigenvectors $(v_2, \ldots, v_n)$ is the Haar measure on the orthogonal group $\mathcal{O}_{n-1}\left(v_1^{\perp}\right)$, denoted by $d\mathcal{H}$, the distribution of $w$ is also invariant by rotation in the orthogonal complement of $v_1$. Furthermore, for any $f,g$ bounded continuous functions and $O \in \mathcal{O}_{n-1}\left(v_1^{\perp}\right)$,
	\begin{flalign*}
	\mathbb{E}\left[f(\|w\|)g\left(\frac{w}{\|w\|}\right)\right] &= \mathbb{E}\left[f(\|w\|)g\left(\frac{Ow}{\|Ow\|}\right)\right] = \mathbb{E}\left[f(\|w\|) \int_{\mathcal{O}_{n-1}\left(v_1^{\perp}\right)} d\mathcal{
		H}(O) g\left(\frac{Ow}{\|Ow\|}\right)\right] \\
	&= \mathbb{E}\left[f(\|w\|) \int_{\mathbb{S}^{n-2}} \frac{g(u) du}{\mathrm{Vol}\left(\mathbb{S}^{n-2}\right)}\right] = \mathbb{E}\left[f(\|w\|)\right]\mathbb{E}\left[g\left(\frac{w}{\|w\|}\right)\right].
	\end{flalign*} 
	This completes the proof of Lemma  \ref{lemma_invrotation}.
\end{proof} 

We can now show the main result of this section, Proposition \ref{prop_gaussian_decomp}.
\begin{proof}[Proof of Proposition \ref{prop_gaussian_decomp}]
	Recall the decomposition $w'= v_1 + w$ with $w =\sum_{i=2}^{N} \theta_i v_i$. According to Lemma \ref{lemma_invrotation}, conditioned to $v_1$, $\frac{w}{\|w\|}$ is uniformly distributed on $\mathbb{S}^{N-2}$, the unit sphere of $v_1^\perp$. We now state a classical result about sampling uniform vectors on a sphere:
	\begin{lemma}
		Let $E$ be $p-$dimensional Euclidean space, endowed with an orthogonal basis $\cB = (e_1, \ldots, e_p)$. Let $u$ be a random vector uniformly distributed on the unit sphere $\mathbb{S}^{p-1}$ of $E$. Then, in basis $\cB$, $u$ has the same distribution as
		$$ \left( \frac{\xi_1}{\sqrt{\sum_{i=1}^{p} \xi_i^2}}, \ldots, \frac{\xi_p}{\sqrt{\sum_{i=1}^{p} \xi_i^2}} \right), $$
		where $\xi_1, \ldots, \xi_p$ are i.i.d. standard normal random variables.
	\end{lemma}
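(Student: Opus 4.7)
The plan is to invoke the rotational invariance of the standard Gaussian measure on $\mathbb{R}^p$, which is the standard route to this classical fact. Identifying $E$ with $\mathbb{R}^p$ via the orthonormal basis $\mathcal{B}$, let $\xi = (\xi_1, \ldots, \xi_p)$ with $\xi_i$ i.i.d.\ $\mathcal{N}(0,1)$. Its density
$$ \frac{1}{(2\pi)^{p/2}} \exp\!\left(-\tfrac{1}{2}\|x\|^2\right) $$
depends only on $\|x\|$, hence the distribution of $\xi$ is invariant under the action of the orthogonal group $\mathcal{O}_p$: for every $O \in \mathcal{O}_p$, $O\xi \overset{(d)}{=} \xi$.

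Next, I would note that $\mathbb{P}(\xi = 0) = 0$, so the map $\xi \mapsto (\|\xi\|, \xi/\|\xi\|) \in \mathbb{R}_{>0} \times \mathbb{S}^{p-1}$ is a.s.\ well defined. Applying this map to both sides of $O\xi \overset{(d)}{=} \xi$ and using $\|O\xi\| = \|\xi\|$ gives
$$ \left(\|\xi\|, \, O \tfrac{\xi}{\|\xi\|}\right) \overset{(d)}{=} \left(\|\xi\|, \, \tfrac{\xi}{\|\xi\|}\right), $$
so in particular $O(\xi/\|\xi\|) \overset{(d)}{=} \xi/\|\xi\|$ for every $O \in \mathcal{O}_p$. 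Since the uniform measure on $\mathbb{S}^{p-1}$ is the unique $\mathcal{O}_p$-invariant Borel probability measure on the sphere (which is a standard consequence of the uniqueness, up to scaling, of the Haar measure on the compact group $\mathcal{O}_p$ acting transitively on $\mathbb{S}^{p-1}$), we conclude that $\xi/\|\xi\|$ is uniformly distributed on $\mathbb{S}^{p-1}$.

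This is exactly the claimed equality in distribution, so the lemma follows. The main technical point, which I would state rather than fully re-derive, is the uniqueness of the $\mathcal{O}_p$-invariant probability measure on $\mathbb{S}^{p-1}$; this is truly the only nonroutine ingredient, and it is standard. No delicate estimate is needed, and the remaining verifications (measurability of the polar map, invariance of the Lebesgue density under orthogonal change of variables) are immediate from elementary measure theory.
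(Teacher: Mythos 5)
Your proof is correct. Note that the paper does not actually prove this lemma: it defers to a citation (O'Rourke et al., Lemma 10.1), so there is no in-paper argument to compare against. Your route --- rotational invariance of the standard Gaussian density, almost-sure well-definedness of the polar map, and uniqueness of the $\mathcal{O}_p$-invariant probability measure on $\mathbb{S}^{p-1}$ --- is the standard complete proof of this classical fact, and each step is valid. One cosmetic remark: the statement says ``orthogonal basis'' where it clearly intends ``orthonormal,'' and your identification of $E$ with $\mathbb{R}^p$ implicitly uses orthonormality; this matches the intended meaning and is not a gap.
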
 We refer e.g. to \cite{ORourke16}, Lemma 10.1, for the proof of this result. In our context, this proves that the joint distribution of the coordinates $w_2, \ldots, w_n$ of $w$ along $v_2, \ldots, v_n$ is always that of a normalized standard Gaussian vector (on $\mathbb{R}^{N-1}$). This joint probability does not dependent on $v_1$. Hence, there exist $Z_2, \ldots, Z_N$ standard Gaussian independent variables, independent from $v_1$ (and from $\| w\|$ by Lemma \ref{lemma_invrotation}), such that:
	\begin{equation*}
	w' = v_1 + \frac{\|w\|}{\left(\sum_{i=2}^{N} Z_i^2\right)^{1/2}} \sum_{i=2}^{N} Z_i v_i.
	\end{equation*}
	Let $Z_1$ be another standard Gaussian variable, independent from everything else. Then
	\begin{equation*}
	w' = \left(1 - \frac{\|w\| Z_1}{\left(\sum_{i=2}^{N} Z_i^2\right)^{1/2}}\right) v_1 + \frac{\|w\|}{\left(\sum_{i=2}^{N} Z_i^2\right)^{1/2}} \sum_{i=1}^{N} Z_i v_i.
	\end{equation*} Let $Z =\sum_{i=1}^{N} Z_i v_i$, which is a standard Gaussian vector. Since the distribution of $Z$ is invariant by permutation of the $\left(Z_i\right)_{1 \leq i \leq N}$, $Z$ and $v_1$ are independent. We have
	\begin{flalign*}
	v'_1 &= \frac{w'}{\| w'\|} = \frac{w'}{\sqrt{1+\|w\|^2}} \\&= \frac{1}{\sqrt{1+\|w\|^2}} \left(1 - \frac{\|w\| Z_1}{\left(\sum_{i=2}^{N} Z_i^2\right)^{1/2}}\right) v_1+ \frac{\|w\| \|Z\|}{\sqrt{1+\|w\|^2} \left(\sum_{i=2}^{N} Z_i^2\right)^{1/2}} \frac{Z}{\|Z\|}.
	\end{flalign*} 
	Taking 
	\begin{equation*}
	\mathbf{S}=\frac{\|w\| \|Z\|}{\left(\sum_{i=2}^{N} Z_i^2\right)^{1/2} - \|w\| Z_1},
	\end{equation*} we get 
	\begin{equation}\label{eq_v'1_wZ}
	v'_1 = \frac{1}{\sqrt{1+\|w\|^2}} \left(1 - \frac{\|w\| Z_1}{\left(\sum_{i=2}^{N} Z_i^2\right)^{1/2}}\right) \left(v_1+ \mathbf{S} \frac{Z}{\|Z\|}\right).
	\end{equation}
	Proposition \ref{w'expansion} together with Lemma \ref{lemma_concentrationksi} yield
	\begin{equation*}
		\|w\|^2=\|w'-v_1\|^2 = (1+o_{\mathbb{P}}(1)) \cdot \sigma^2 \sum_{i=2}^{N} \frac{\langle H v_i,v_1 \rangle ^2 }{\left(\lambda_1 - \lambda_i\right)^2} \asymp \sigma^2 N^{1/3},
	\end{equation*} the last quantity being $o(1)$ under assumption \eqref{microscopicregime}. With the previous computation, equation \eqref{eq_v'1_wZ} becomes
	\begin{align*}
	v'_1 = \left(1 + o_{\mathbb{P}}(1)\right) \left(v_1+ \mathbf{S} \frac{Z}{\|Z\|}\right),
	\end{align*}
	with $\mathbf{S} = (1+o_{\mathbb{P}}(1)) \|w\|\; {\asymp} \; \sigma N^{1/6}$.
\end{proof}

\section{Definition and analysis of a toy model}
\label{toymodel}
Now that we have established a expansion of $v'_1$ with respect to $v_1$, our main question boils down to the study of the effect of a random Gaussian perturbation of a Gaussian vector in terms of rank of its coordinates: if these ranks are preserved, the permutation that aligns these two vectors will be $\hat{\Pi}=\Pi = \mathrm{Id}$. Otherwise we want to understand the error made between $\hat{\Pi}$ and ${\Pi}=\mathrm{Id}$.

\subsection{Definitions and notations}
\label{relevantlink}
We refer to Section \ref{notations_def} for the definition of the toy model $\mathcal{J}(N,s)$. Recall that we want to compute, when $(X,Y) \sim \mathcal{J}(N,s)$, the probability
\begin{equation*}
p(N,s) := \mathbb{P}\left(r_1(X)=r_1(Y)\right).
\end{equation*}
In this section, we denote by $E$ the probability density function of a standard Gaussian variable, and $F$ its cumulative distribution function. Namely
\begin{equation*}
E(u) := \frac{1}{\sqrt{2 \pi}} e^{-u^2 /2} \quad \mbox{and} \quad F(u) := \frac{1}{\sqrt{2 \pi}} \int_{- \infty}^{u} e^{-z^2 /2} dz.
\end{equation*}

We hereafter elaborate on the link between this toy model and our first matrix model (\ref{GOEmodel}) in Section \ref{linkGOEtoy}. Since $v_1$ is uniformly distributed on the unit sphere, we have the equality in distribution $v_1 = \frac{X}{\|X\|}$ where $X$ is a standard Gaussian vector of size $N$, independent of $Z$ by Proposition \ref{prop_gaussian_decomp}. We write
\begin{align*}
v_1 &= \frac{X}{\|X\|}, \\
v'_1 &=  \left( 1 + o_{\mathbb{P}}(1) \right)\left(  \frac{X}{\|X\|} + \mathbf{S} \frac{Z}{\|Z\|}\right).
\end{align*}
Note that for all $\lambda>0$, $r_1(\lambda T) = r_1(T)$, hence
\begin{equation}
\label{GOEtoylink}
r_1(v_1) = r_1(X), \quad r_1(v'_1) = r_1 \left(X + \mathbf{s} Z \right),\\
\end{equation} where 
\begin{equation*}
\mathbf{s}  = \frac{\mathbf{S}  \|X\|}{\|Z\|} \asymp \sigma N^{1/6},
\end{equation*} where we used the law of large numbers ($\|X\|/\|Z\| \to 1$ p.s.) as well as Proposition \ref{prop_gaussian_decomp} in the last expansion. Equation \eqref{GOEtoylink} shows that this toy model is relevant for our initial problem, up to the fact that the noise term $\mathbf{s}$ is random in the matrix model (though we know its order of magnitude to be $\asymp \sigma N^{1/6}$).
\begin{remark}
The intuition for the zero-one law for $p(N,s)$ is as follows. If we sort the $N$ coordinates of $X$ on the real axis, all coordinates being typically perturbed by a factor $s$, it seems natural to compare $s$ with the typical gap between two coordinates of order $1/N$ to decide whether the rank of the first coordinate of $X$ is preserved in $Y$. 
\end{remark}
Let us show that this intuition is rigorously verified. For every couple $(x,y)$ of real numbers, define
\begin{equation*}
\mathcal{N}_{N,s}^{+}(x,y) := \sharp \left\lbrace 1 \leq i \leq N, \; X_i > x, Y_i < y \right\rbrace,
\end{equation*}
\begin{equation*}
\mathcal{N}^{-}_{N,s}(x,y) := \sharp \left\lbrace 1 \leq i \leq N, \; X_i < x, Y_i > y \right\rbrace.
\end{equation*} In the following, we omit all dependencies in $N$ and $s$, using the notations $\mathcal{N}^{+}$ and $\mathcal{N}^{-}$. The corresponding regions are shown on Figure \ref{imageN+-}.
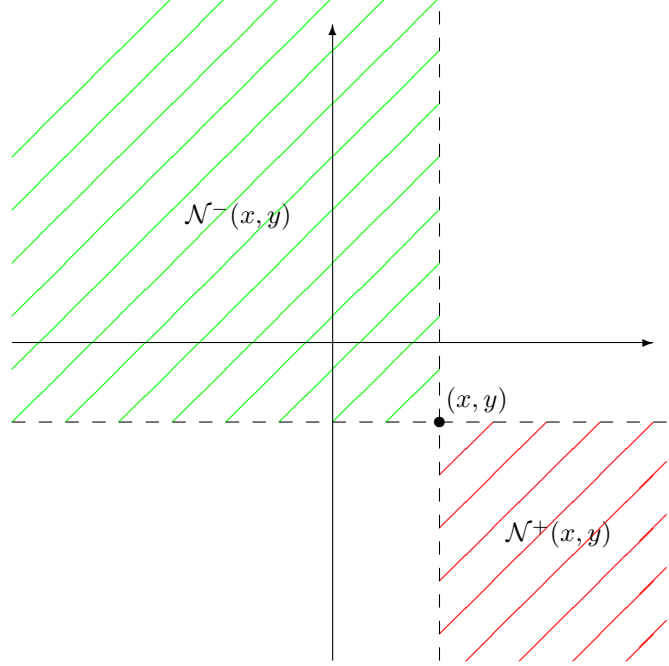
\begin{figure}[H]
	\centering
	\begin{picture}(240,240)
\put(140,90){\textcolor{green}{\line(1,1){20}}}
\put(120,90){\textcolor{green}{\line(1,1){40}}}
\put(100,90){\textcolor{green}{\line(1,1){60}}}
\put(80,90){\textcolor{green}{\line(1,1){80}}}
\put(60,90){\textcolor{green}{\line(1,1){100}}}
\put(40,90){\textcolor{green}{\line(1,1){120}}}
\put(20,90){\textcolor{green}{\line(1,1){140}}}
\put(0,90){\textcolor{green}{\line(1,1){160}}}
\put(0,110){\textcolor{green}{\line(1,1){140}}}
\put(0,130){\textcolor{green}{\line(1,1){120}}}
\put(0,150){\textcolor{green}{\line(1,1){100}}}
\put(0,170){\textcolor{green}{\line(1,1){80}}}
\put(0,190){\textcolor{green}{\line(1,1){60}}}
\put(65,165){\mbox{$\mathcal{N}^{-}(x,y)$}}

\put(160,70){\textcolor{red}{\line(1,1){20}}}
\put(160,50){\textcolor{red}{\line(1,1){40}}}
\put(160,30){\textcolor{red}{\line(1,1){60}}}
\put(160,10){\textcolor{red}{\line(1,1){80}}}
\put(170,0){\textcolor{red}{\line(1,1){75}}}
\put(190,0){\textcolor{red}{\line(1,1){55}}}
\put(210,0){\textcolor{red}{\line(1,1){35}}}
\put(230,0){\textcolor{red}{\line(1,1){15}}}
\put(185,45){\mbox{$\mathcal{N}^{+}(x,y)$}}

\put(0,120){\vector(1,0){240}}

\put(120,0){\vector(0,1){240}}

\put(160,90){\circle*{4}}
\put(174,98){\makebox(0,0){$(x,y)$}}

\multiput(0,90)(10,0){25}{\line(1,0){5}}

\multiput(160,0)(0,10){25}{\line(0,1){5}}

\end{picture}
	\caption{\label{imageN+-}Areas corresponding to $\mathcal{N}^{+} (x,y)$ and $\mathcal{N}^{-} (x,y)$.}
\end{figure} We will also need the following probabilities
\begin{align*}
S^{+}(x,y) &:= \mathbb{P}\left(X_1 > x, Y_1 < y\right), \mbox{ and}\\
S^{-}(x,y) &:= \mathbb{P}\left(X_1 < x, Y_1 > y\right) = S^{+}(-x,-y).
\end{align*}
In terms of distribution, the random vector $$\left(\mathcal{N}^{+}(x,y), \mathcal{N}^{-}(x,y), N-1-\mathcal{N}^{+}(x,y)-\mathcal{N}^{+}(x,y)\right)$$ follows a multinomial distribution of parameters $$\left(N-1,S^{+}(x,y),S^{-}(x,y),1-S^{+}(x,y)-S^{-}(x,y)\right).$$ 
In order to have $r_1(X)=r_1(Y)$, there must be the same number of points on the two domains on Figure \ref{imageN+-}, for $x=X_1$ and $y=Y_1$. We then have the following expression of $p(N,s)$:
\begin{align*} 
p(N,s) &= \mathbb{E}\left[\mathbb{P}\left(\mathcal{N}^{+}(X_1,Y_1)=\mathcal{N}^{-}(X_1,Y_1)\right)\right] \\
&= \int_{\mathbb{R}} \int_{\mathbb{R}} \mathbb{P}(dx,dy) \mathbb{P}(\mathcal{N}^{+}(x,y)=\mathcal{N}^{-}(x,y)) \\
&= \int_{\mathbb{R}} \int_{\mathbb{R}} E(x) E(z) \phi_{x,z}(N,s) \, dx \, dz,
\end{align*} with
\begin{equation}
\label{phiexpression}
\phi_{x,z}(N,s) := \sum_{k=0}^{\lfloor(N-1)/2\rfloor} \binom{N-1}{k}  \binom{N-1-k}{k} \left(S^{+}_{x,z}\right)^k \left(S^{-}_{x,z}\right)^k \left(1-S^{+}_{x,z}-S^{-}_{x,z}\right)^{N-1-2k},
\end{equation} using the notations $S^{+}_{x,z} = S^{+}(x,x+sz)$ and $S^{-}_{x,z} = S^{-}(x,x+sz)$. A simple computation shows that
\begin{flalign}
S^{+}(x,x+sz) &= \int_{x}^{+ \infty} \frac{1}{\sqrt{2\pi}} e^{-u^2 /2} \left( \int_{- \infty}^{z+\frac{x-u}{s}} \frac{1}{ \sqrt{2\pi}} e^{-v^2 / 2} \, dv \right) du \nonumber \\
&= \int_{x}^{+ \infty} E(u) \, F \left(z-\frac{u-x}{s}\right) du,  \label{S+-inf} \\
&= s\int_{0}^{+ \infty} E(x+vs) \, F \left(z-v\right) dv. \label{S+-0} 
\end{flalign}
We have the classical integration result
\begin{equation}
\label{integrateS_+-}
\int_{- \infty}^{z} F(u) du = z F(z) + E(z).
\end{equation}
From (\ref{S+-inf}), (\ref{S+-0}) and (\ref{integrateS_+-}) we derive the following easy lemma:
\begin{lemma}
	\label{s+-}
	For all $x$ and $z$,
	\begin{flalign*}
	S^{+}(x,x+s z) & \underset{s \to 0}{=} s \left[E(x) \left(z F(z) +E(z)\right)\right] + o(s),\\
	S^{+}(x,x+s z) & \underset{s \to \infty}{\longrightarrow} F(z) \left(1-F(x)\right),\\
	S^{-}(x,x+s z) & \underset{s \to 0}{=} s \left[E(x) \left(-z+ z F(z) +E(z)\right)\right] + o(s),\\
	S^{-}(x,x+s z) & \underset{s \to \infty}{\longrightarrow} F(x) \left(1-F(z)\right).
	\end{flalign*}
	Moreover, both $s \mapsto S^{+}(x,x+s z)$ and $s \mapsto S^{-}(x,x+s z)$ are increasing.
\end{lemma}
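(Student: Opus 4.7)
The plan is to read off all four asymptotic identities directly from the two integral representations \eqref{S+-inf} and \eqref{S+-0} of $S^+$, combined with the elementary integration identity \eqref{integrateS_+-} and the reflection symmetry $S^-(x,y) = S^+(-x,-y)$ noted in the text, and to obtain monotonicity by inspecting a single integrand after a change of variables.

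For the small-$s$ asymptotics of $S^+$ I would use \eqref{S+-0}, namely $S^+(x,x+sz) = s \int_0^\infty E(x+vs) F(z-v)\, dv$. The integrand converges pointwise to $E(x) F(z-v)$ as $s \to 0$, and is dominated by $(2\pi)^{-1/2} F(z-v)$, which is integrable on $(0,\infty)$ since $\int_0^\infty F(z-v)\, dv = \int_{-\infty}^z F(u)\, du = zF(z) + E(z)$ by \eqref{integrateS_+-}. Dominated convergence then yields $S^+(x,x+sz) = s\, E(x)\bigl[zF(z)+E(z)\bigr] + o(s)$. For the large-$s$ asymptotics I would instead use \eqref{S+-inf}: in $\int_x^\infty E(u) F\bigl(z-(u-x)/s\bigr) du$ the integrand converges pointwise to $E(u) F(z)$ and is dominated by $E(u)$, yielding the limit $F(z)\bigl(1-F(x)\bigr)$.

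The statements for $S^-$ are then immediate: the symmetry $S^-(x,x+sz) = S^+(-x,-x+s(-z))$ transforms both asymptotics by the substitution $x \mapsto -x$, $z \mapsto -z$, and the identities $E(-u)=E(u)$ and $F(-u) = 1-F(u)$ give exactly the expressions claimed in the lemma. For monotonicity, in \eqref{S+-inf} I would change variables $t=u-x$ to obtain $S^+(x,x+sz) = \int_0^\infty E(x+t) F(z-t/s)\, dt$. For each $t>0$ the quantity $z-t/s$ is increasing in $s>0$, so the integrand is pointwise increasing in $s$, and hence so is the integral. The same argument applied after the reflection proves that $s \mapsto S^-(x,x+sz)$ is increasing as well.

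I do not anticipate a serious obstacle: both limits reduce to one application of dominated convergence once the appropriate representation of $S^+$ is selected, and monotonicity is transparent after the change of variables. The only mildly delicate point is choosing the correct integrable dominating function in the $s \to 0$ regime, which is precisely what \eqref{integrateS_+-} supplies through the integrability of $v \mapsto F(z-v)$ on $(0,\infty)$.
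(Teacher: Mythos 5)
Your proof is correct and follows exactly the route the paper intends: the paper simply asserts that the lemma is derived from \eqref{S+-inf}, \eqref{S+-0} and \eqref{integrateS_+-}, and your argument (dominated convergence in each representation, the reflection $S^-(x,y)=S^+(-x,-y)$ with $E(-u)=E(u)$, $F(-u)=1-F(u)$, and pointwise monotonicity of the integrand in $s$) is precisely the verification being left to the reader. No gaps.
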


\subsection{Zero-one law for $p(N,s)$} 
In this Section we give a proof of Proposition \ref{prop_zero_one_toy}.

\begin{proof}[Proof of Proposition \ref{prop_zero_one_toy}]
	\underline{In the first case $(i)$}, if $s = o(1/N)$, we have the following inequality
	\begin{equation}
	\label{pi}
	p(N,s) \geq \int_{\mathbb{R}} \int_{\mathbb{R}} dx dz E(x) E(z) \mathbb{P} \left(\mathcal{N}^{+}(x,x+sz)=\mathcal{N}^{-}(x,x+sz)=0\right).
	\end{equation}
	According to Lemma \ref{s+-}, for all $x,z \in \mathbb{R}$
	\begin{flalign*}
	\mathbb{P} \left(\mathcal{N}^{+}(x,x+sz)=\mathcal{N}^{-}(x,x+sz)=0\right)&= \left(1-S^{+}(x,x+sz)-S^{-}(x,x+sz)\right)^{N-1}\\ & \sim \exp\left(- NsE(x)\left[z(2F(z)-1)+2E(z)\right]\right) \\
	& \underset{N \to \infty}{\longrightarrow} 1,&&
	\end{flalign*} By applying the dominated convergence theorem in (\ref{pi}), we conclude that $p(N,s) \to 1$.\\
	
	\underline{In the second case $(ii)$}, if $s N \to \infty$, recall that
	\begin{equation}
	\label{pii}
	p(N,s)= \int_{\mathbb{R}} \int_{\mathbb{R}} dx dz E(x) E(z) \phi_{x,z}(N,s),
	\end{equation} with $\phi_{x,z}$ defined in equation $(\ref{phiexpression})$. In the rest of the proof, we fix $x$ and $z$ two real numbers. Letting
\begin{equation*}
b(N,s,k) := \binom{N-1}{k} \left(S^{+}_{x,z}\right)^k \left(1- S^{+}_{x,z}\right)^{N-1-k}
\end{equation*} and 
\begin{equation*}
M(N,s) := \underset{0 \leq k \leq N-1}{\max} b(N,s,k).
\end{equation*} Note that by Lemma \ref{s+-}, there exists $C = C(x,z) <1$ such that for $N$ large enough, $S^{+}_{x,z}<C<1$. Moreover, combining this Lemma with assumption $(ii)$ gives that $ N S^{+}_{x,z} \to \infty$. It is also known that $M(N,s) = b(N,s,\lfloor N S^{+}_{x,z} \rfloor)$ and a classical computation shows that in this case (see e.g. \cite{Bollobas2001}, formula 1.5):
\begin{flalign*}
M(N,s) & = \binom{N-1}{\lfloor N S^{+}_{x,z} \rfloor} \left(S^{+}_{x,z}\right)^{\lfloor N S^{+}_{x,z} \rfloor} \left(1- S^{+}_{x,z}\right)^{N-1-{\lfloor N S^{+}_{x,z} \rfloor}}\\
& \sim \frac{1}{\sqrt{2 \pi N t(1-t)}} t^{-(N-1)t} (1-t)^{-(N-1)(1-t)} \left(S^{+}_{x,z}\right)^{(N-1)t} \left(1- S^{+}_{x,z}\right)^{(N-1)(1-t)}\\
& = \left(N S^{+}_{x,z}\right)^{-1/2} (1+O(1)) \to 0.
\end{flalign*} where $t := \frac{\lfloor N S^{+}_{x,z} \rfloor}{N-1} \sim  S^{+}_{x,z}$. Working with equation \eqref{phiexpression}, we obtain the following control
\begin{flalign*}
\phi_{x,z}(N,s) & \leq M(N,s) \times \sum_{k=0}^{\lfloor(N-1)/2\rfloor}  \binom{N-1-k}{k} \left(S^{-}_{x,z}\right)^k \frac{\left(1-S^{+}_{x,z}-S^{-}_{x,z}\right)^{N-1-2k}}{\left(1-S^{+}_{x,z}\right)^{N-1-k}}\\
& \overset{(a)}{=} M(N,s) \times \frac{(1-S^{+}_{x,z})\left(1-\left(\frac{-S^{+}_{x,z}}{1-S^{-}_{x,z}} \right)^N\right)}{1+S^{-}_{x,z}-S^{+}_{x,z}}\\
& \overset{(b)}{=} M(N,s) \times O(1) \underset{N \to \infty}{\longrightarrow} 0. &&
\end{flalign*}
We used in $(b)$ the fact that $S^{+}_{x,z}+S^{-}_{x,z}$ is increasing in $s$, and that given $x$ and $z$, for all $s>0$, by Lemma \ref{s+-}, $$S^{+}_{x,z}+S^{-}_{x,z}< F(x)\left(1-F(z)\right)+F(z)\left(1-F(x)\right)<1.$$ We used in $(a)$ the following combinatorial result:

\begin{lemma}
\label{fibo}
For all $\alpha>0$, 
\begin{equation}
\label{sum_fibo}
\sum_{k=0}^{\lfloor(N-1)/2\rfloor}  \binom{N-1-k}{k} \alpha^k = \frac{1}{\sqrt{1+4 \alpha}} \left[\left(\frac{1+\sqrt{1+4 \alpha}}{2}\right)^N - \left(\frac{1-\sqrt{1+4 \alpha}}{2}\right)^N\right].
\end{equation}
\end{lemma}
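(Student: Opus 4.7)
The plan is to establish the identity by showing that the left-hand side, viewed as a sequence in $N$, satisfies a linear recurrence of order two whose characteristic roots are precisely $r_\pm := (1 \pm \sqrt{1+4\alpha})/2$, and then identify the closed form by matching initial conditions. Denote
$$u_N := \sum_{k=0}^{\lfloor (N-1)/2 \rfloor} \binom{N-1-k}{k} \alpha^k,$$
and let $R_N$ denote the right-hand side of \eqref{sum_fibo}. All binomial coefficients are extended by the standard convention $\binom{n}{k}=0$ whenever $k<0$ or $k>n$, which lets us drop the explicit upper bound on the summation index.

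First, I would check the base cases directly: $u_1 = \binom{0}{0} = 1$ and $u_2 = \binom{1}{0} = 1$. On the other side, noting that $r_+ - r_- = \sqrt{1+4\alpha}$ and $r_+ + r_- = 1$, one gets $R_1 = (r_+ - r_-)/(r_+ - r_-) = 1$ and $R_2 = r_+ + r_- = 1$. So $u_N = R_N$ for $N \in \{1,2\}$.

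Second, I would derive the recurrence $u_N = u_{N-1} + \alpha\, u_{N-2}$ for $N \geq 3$ by applying Pascal's rule inside the sum:
$$\binom{N-1-k}{k} = \binom{N-2-k}{k} + \binom{N-2-k}{k-1}.$$
Splitting the sum gives $u_N = \sum_k \binom{N-2-k}{k}\alpha^k + \alpha \sum_k \binom{N-2-k}{k-1}\alpha^{k-1}$; the first piece is exactly $u_{N-1}$, and in the second piece the reindexing $j = k-1$ yields $\sum_j \binom{N-3-j}{j}\alpha^j = u_{N-2}$. The boundary terms vanish thanks to the convention on binomial coefficients, so no special care is needed there.

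Finally, since $r_\pm$ are the two roots of $x^2 = x + \alpha$, any sequence of the form $A r_+^N + B r_-^N$ satisfies the same recurrence. The sequence $R_N = (r_+^N - r_-^N)/(r_+ - r_-)$ is one such solution, and it agrees with $u_N$ at $N=1,2$. By a straightforward induction the equality $u_N = R_N$ extends to all $N \geq 1$, which is exactly \eqref{sum_fibo}. The only potential subtlety is the bookkeeping of the floor in the summation range when invoking Pascal's identity; once the extend-by-zero convention is adopted, the argument reduces to elementary linear algebra, so I don't anticipate a real obstacle.
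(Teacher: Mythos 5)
Your proof is correct, but it takes a genuinely different route from the paper. You prove the identity by showing that $u_N$ satisfies the Fibonacci-type recurrence $u_N = u_{N-1} + \alpha\, u_{N-2}$ (via Pascal's rule and the extend-by-zero convention), observing that the right-hand side is the solution of the same recurrence with characteristic roots $r_\pm = \frac{1\pm\sqrt{1+4\alpha}}{2}$, and matching the initial values $u_1=u_2=1$. The paper instead works with the ordinary generating function $f(t)=\sum_{N\ge 1} u_N t^N$: expanding $\frac{t}{1-t-\alpha t^2}$ as $t\sum_m (t+\alpha t^2)^m$ and collecting coefficients shows $f(t)=\frac{t}{1-t-\alpha t^2}$, and a partial-fraction decomposition in terms of $\phi_\pm = r_\pm$ then reads off the closed form. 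The two arguments are two faces of the same linear-recurrence structure; yours is slightly more elementary (no power series manipulations, hence no need for the crude bound $u_N\le(1+\alpha)^N$ that the paper uses to justify convergence), while the generating-function route produces the closed form mechanically without having to guess it first. Your handling of the boundary cases of Pascal's rule is adequate: for $N\ge 3$ every nonzero term has top index $N-1-k\ge 1$, so the only problematic instance of Pascal's identity (top index $0$, bottom index $0$) never arises, and the vanishing terms split into vanishing terms. No gap.
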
 
We refer to Appendix \ref{proof_lemma_fibo} for a proof of this result. To obtain $(a)$ from Lemma \ref{fibo}, we apply $(\ref{sum_fibo})$ to $\alpha = \frac{S^{-}_{x,z} \left(1-S^{+}_{x,z}\right)}{\left(1-S^{+}_{x,z}-S^{-}_{x,z}\right)^2}$, with $\sqrt{1+4 \alpha} = \frac{1-S^{+}_{x,z}+S^{-}_{x,z}}{1-S^{+}_{x,z}-S^{-}_{x,z}}$. Some simple simplifications then give the claimed result. The dominated convergence theorem in $(\ref{pii})$ shows that $p(N,s) \to 0$ and ends the proof.
\end{proof}

\begin{remark}
The above computations also imply the existence of a non-degenerate limit of $p(N,s)$ in the critical case where $s N \to c>0$: in this case, previous discussions as well as Lemma \ref{s+-} show that the joint distribution of $(\cN^+(x,x+sz), \cN^-(x,x+sz))$ is asymptotically
$$\Poi(c \left[E(x) \left(z F(z) +E(z)\right)\right]) \otimes \Poi(c \left[E(x) \left(-z+z F(z) +E(z)\right)\right]).  $$ 
Therefore, $p(N,s)$ has a non-degenerate limit given by 
\begin{equation}
\int_{\mathbb{R}} \int_{\mathbb{R}} E(x) E(z) \cdot \mathbf{G}\left(c \left[E(x) \left(z F(z) +E(z)\right)\right],c \left[E(x) \left(-z+z F(z) +E(z)\right)\right] \right) \, dx \, dz,
\end{equation} where
\begin{equation}
\mathbf{G}(a,b) := \mathbb{P}(\Poi(a)=\Poi(b)) = e^{-(a+b)} \sum_{k \geq 0} \frac{a^k b^k}{(k!)^2}.
\end{equation} 
\end{remark}

\section{Analysis of the \emph{EIG1} method for matrix alignment}
\label{EIG1threshold}
By now, we come back to our initial problem, which is the analysis of \emph{EIG1} method. Recall that for any estimator $\hat{\Pi}$ of $\Pi$, its overlap is defined as follows
\begin{equation*}
\mathcal{L}(\hat{\Pi},\Pi) := \frac{1}{N} \sum_{i=1}^{N} \mathbf{1}_{\hat{\Pi}(i)=\Pi(i)}.
\end{equation*}
The aim of this section is to show how Propositions \ref{prop_gaussian_decomp} and \ref{prop_zero_one_toy} can be assembled to show the main result of our study, namely Theorem \ref{theorem_01law}.

\begin{proof}[Proof of Theorem \ref{theorem_01law}]
\underline{In the first case $(i)$}, assuming $\sigma = o(N^{-7/6-\epsilon})$ for some $\epsilon>0$, then in particular condition (\ref{microscopicregime}) holds. Proposition \ref{prop_gaussian_decomp} as well as equation \eqref{GOEtoylink} in Section \ref{toymodel} enable to identify $v_1$ and $v'_1$ with the following vectors:
\begin{equation}
\label{identificationrang}
v_1 \sim X, \quad v'_1 \sim X + \mathbf{s} Z,
\end{equation} where $X$ and $Z$ are two independent Gaussian vectors from the toy model, and where $\mathbf{s} \asymp \sigma N^{1/6} $  w.h.p. Recall that we work under the assumptions $\Pi=\mathrm{Id}$ and $\langle v_1, v'_1 \rangle >0$. In this case, we expect $\Pi_{+}$ to be very close to $\mathrm{Id}$.

We will use the notations of Section \ref{toymodel} hereafter. Let's take $f \in \mathcal{F}$ such that w.h.p., $\sigma N^{1/6}f(N)^{-1} \leq \mathbf{s} \leq \sigma N^{1/6 }f(N)$. We have for all $1 \leq i \leq N$,
\begin{flalign*}
\mathbb{P}\left({\Pi_{+}}(i)=\Pi(i)\right) &=  \mathbb{P}\left({\Pi_{+}}(1)=\Pi(1)\right) \\
& = \mathbb{E}\left[\iint dx dz E(x) E(z) \phi_{x,z}\left(N, \mathbf{s} \right)\mathbf{1}_{\sigma N^{1/6}f(N)^{-1} \leq \mathbf{s} \leq \sigma N^{1/6 }f(N)}\right] + o(1)\\
& = \iint dx dz E(x) E(z) \mathbb{E}\left[\phi_{x,z}\left(N, \mathbf{s} \right)\mathbf{1}_{\sigma N^{1/6}f(N)^{-1} \leq \mathbf{s} \leq \sigma N^{1/6 }f(N)}\right] + o(1).&&
\end{flalign*} When conditioning on the event $\cA$ where $\sigma N^{1/6}f(N)^{-1} \leq \mathbf{s} \leq \sigma N^{1/6 }f(N)$, we know that $\mathbf{s}N \to 0$ by condition $(i)$ and for all $x,z$, $\dE\left[\phi_{x,z}\left(N, \mathbf{s} \right) \, | \, \cA \right] \to 1$ as shown in Section \ref{toymodel}. Since $\cA$ occurs w.h.p. we have
\begin{flalign*}
\dE\left[\phi_{x,z}\left(N, \mathbf{s} \right)\mathbf{1}_{\cA}\right] {\longrightarrow} 1,
\end{flalign*} which implies with the dominated convergence theorem that 
\begin{equation}\label{eq_last_eq(i)}
\mathbb{E}\left[\mathcal{L}({\Pi_{+}},\Pi)\right] \underset{N \to \infty}{\longrightarrow} 1
\end{equation}
and thus
\begin{equation*}
\mathcal{L}({\Pi_{+}},\Pi) \overset{L^1}{\rightarrow} 1.
\end{equation*} 
We now check that w.h.p., $\Pi_{+}$ is preferred to $\Pi_{-}$ in the \emph{EIG1} method:
\begin{lemma}
		\label{Pi+casei}
		In the case $(i)$, if $\langle v_1,v'_1 \rangle >0$, we have w.h.p.
		\begin{equation*}
		\langle A, \Pi_{+} B \Pi_{+}^T \rangle > \langle A, \Pi_{-} B \Pi_{-}^T \rangle,
		\end{equation*} in other words Algorithm \emph{EIG1} returns w.h.p. $\hat{\Pi}=\Pi_{+}$.
\end{lemma}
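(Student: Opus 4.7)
The plan is to bound $\langle A, \Pi_+ B \Pi_+^T \rangle$ from below by $N - o(N)$ and $\langle A, \Pi_- B \Pi_-^T \rangle$ from above by $o(N)$; the $\Omega(N)$ gap then dominates the fluctuations and yields the claim. The analysis proceeds through the cycle structure of $\pi_\pm$.

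First, using $B = A + \sigma H$ (since $\Pi = \mathrm{Id}$) and symmetry of $A$, I would expand for any permutation $\pi$:
\[ \langle A, \Pi B \Pi^T \rangle = \sum_{i,j} A_{ij} A_{\pi^{-1}(i),\pi^{-1}(j)} + \sigma \langle \Pi^T A \Pi, H \rangle. \]
The $\sigma$-term is harmless: by Cauchy--Schwarz, $|\sigma \langle \Pi^T A \Pi, H \rangle| \leq \sigma \|A\|_F \|H\|_F = O_{\mathbb{P}}(\sigma N)$, which is $o_{\mathbb{P}}(1)$ under $(i)$, uniformly in $\pi$. For the main term, partition the sum via $\mathrm{Stab}(\pi) := \{(i,j) : \{\pi^{-1}(i), \pi^{-1}(j)\} = \{i,j\}\}$ of cardinality $k_1(\pi)^2 + 2 k_2(\pi)$, where $k_1$ and $k_2$ count fixed points and transpositions of $\pi$. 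By symmetry of $A$, on $\mathrm{Stab}(\pi)$ the summand equals $A_{ij}^2$, and this restricted sum concentrates at $|\mathrm{Stab}(\pi)|/N$ with $O_{\mathbb{P}}(1)$ fluctuations; off $\mathrm{Stab}(\pi)$, the indices $(i,j)$ and $(\pi^{-1}(i), \pi^{-1}(j))$ index distinct Gaussian entries, so the complementary sum has zero mean and variance $O(1)$ by Wick's formula. A Hanson--Wright concentration (with a union bound over the two random values $\pi_\pm$, or over all $N!$ permutations using $\log(N!) \ll N^2$) then yields $\sum_{i,j} A_{ij} A_{\pi^{-1}(i),\pi^{-1}(j)} = |\mathrm{Stab}(\pi)|/N + o_{\mathbb{P}}(N)$.

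The second step is combinatorial. From \eqref{eq_last_eq(i)}, $k_1(\pi_+) = N - o_{\mathbb{P}}(N)$, hence $|\mathrm{Stab}(\pi_+)|/N \geq k_1(\pi_+)^2/N = N - o_{\mathbb{P}}(N)$. For $\pi_-$, I use the explicit description of both aligning permutations: writing $r(i)$ for the rank of $(v'_1)_i$ in $v'_1$ and $s(k)$ for the index of the $k$-th largest coordinate of $v_1$, one has $\pi_+(i) = s(r(i))$ and $\pi_-(i) = s(N+1-r(i))$. Hence $\pi_+(i) = \pi_-(i)$ forces $r(i) = (N+1)/2$, which admits at most one solution. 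Therefore $\pi_+$ and $\pi_-$ share at most one common fixed point, whence
\[ k_1(\pi_-) \leq 1 + (N - k_1(\pi_+)) = o_{\mathbb{P}}(N). \]
Combined with the trivial bound $k_2(\pi_-) \leq N/2$, this gives $|\mathrm{Stab}(\pi_-)|/N \leq k_1(\pi_-)^2/N + 1 = o_{\mathbb{P}}(N)$.

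Assembling the estimates,
\[ \langle A, \Pi_+ B \Pi_+^T \rangle - \langle A, \Pi_- B \Pi_-^T \rangle \geq \bigl(N - o_{\mathbb{P}}(N)\bigr) - o_{\mathbb{P}}(N) > 0 \]
with high probability, so EIG1 returns $\Pi_+$. The main obstacle I anticipate is the uniform-in-$\pi$ control of the off-diagonal noise, since $\pi_\pm$ are measurable with respect to $(A,H)$ and thus not independent of it; this is circumvented by the fact that the Wick variance bound depends on $\pi$ only through $|\mathrm{Stab}(\pi)|/N^2 \leq 1$, which is uniform, allowing the Gaussian tail plus $N!$ union bound to close the argument. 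The combinatorial heart of the proof is the one-line observation that $\pi_+$ and $\pi_-$ can coincide at only a single index.
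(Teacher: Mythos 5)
Your proof is correct and takes a genuinely different route from the paper's. Both proofs establish the same two inequalities, $\langle A,\Pi_+B\Pi_+^T\rangle = N-o_{\mathbb{P}}(N)$ and $\langle A,\Pi_-B\Pi_-^T\rangle = o_{\mathbb{P}}(N)$, but the mechanisms differ at every step. The paper works with the ``good set'' $\mathcal{G}=\{i:\Pi_+(i)=i\}$, obtains a quantitative bound $\sharp\mathcal{G}\geq N-N^{1-\epsilon/2}$ from the toy-model analysis, and then controls the remaining sums by the crude entrywise bound $\max_{i,j}|A_{ij}|=O_{\mathbb{P}}(\sqrt{\log N/N})$; for $\Pi_-$ it invokes the explicit relation $\Pi_-(i)=\Pi_+(N+1-i)$ to argue that $A_{ij}$ and $B_{\Pi_-(i),\Pi_-(j)}$ are essentially independent on $\mathcal{G}^2$. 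You instead organize everything around the cycle structure $(k_1,k_2,|\mathrm{Stab}(\pi)|)$, prove a single concentration estimate $\sum_{ij}A_{ij}A_{\pi^{-1}(i),\pi^{-1}(j)}=|\mathrm{Stab}(\pi)|/N+o_{\mathbb{P}}(N)$ uniformly over all $N!$ permutations via Hanson--Wright (which neatly sidesteps the measurability of $\pi_\pm$), and then handle $\pi_-$ with the one-line combinatorial observation that $\pi_+(i)=\pi_-(i)$ forces $r(i)=(N+1)/2$, so $\pi_+$ and $\pi_-$ share at most one fixed point and hence $k_1(\pi_-)=o_{\mathbb{P}}(N)$. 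This buys two things: (a) you only need the qualitative $k_1(\pi_+)=N-o_{\mathbb{P}}(N)$ from \eqref{eq_last_eq(i)} rather than the quantitative rate; (b) you do not rely on the relation $\Pi_-(i)=\Pi_+(N+1-i)$, which as stated appears to require a symmetry of the rank function of $v_1'$ that does not hold in general --- your argument via the rank description $\pi_\pm(i)=s(r(i))$ vs.\ $s(N+1-r(i))$ is the correct and robust version of that idea, and also forces you to account for the $2k_2(\pi_-)\le N$ contribution to $\mathrm{Stab}(\pi_-)$, which the paper's phrasing glosses over but which your bound $|\mathrm{Stab}(\pi_-)|/N\le k_1(\pi_-)^2/N+1$ handles cleanly.

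One small imprecision worth flagging: your phrase ``a union bound over the two random values $\pi_\pm$'' is not a valid alternative as stated, since one cannot union-bound over data-dependent indices; the $N!$ uniform version you immediately offer is the right formulation, and it does close because $\log N!=O(N\log N)\ll N^2$ while the Hanson--Wright exponent is $\Omega(\delta^2N^2)$ at deviation $\delta N$ (the quadratic form has Frobenius norm $O(1)$ and operator norm $O(1/N)$ uniformly in $\pi$, since after rescaling it is $\tfrac{1}{2N}(P+P^T)$ for a permutation matrix $P$ on unordered pairs).
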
 This Lemma is proved in Appendix \ref{proof_lemma_Pi+(i)} and implies, together with \eqref{eq_last_eq(i)}, that
\begin{flalign*}
\mathbb{E}\left[\mathcal{L}(\hat{\Pi},\Pi)\right] & \geq \mathbb{E}\left[\mathcal{L}(\hat{\Pi},\Pi)\mathbf{1}_{\hat{\Pi}=\Pi_{+}}\right] = \mathbb{E}\left[\mathcal{L}(\Pi_{+},\Pi)\mathbf{1}_{\hat{\Pi}=\Pi_{+}}\right] \\
& = \mathbb{E}\left[\mathcal{L}(\Pi_{+},\Pi)\right]  - \mathbb{E}\left[\mathcal{L}(\Pi_{+},\Pi)\mathbf{1}_{\hat{\Pi}=\Pi_{-}}\right] \\
& = 1 -o(1).
\end{flalign*} and thus 
\begin{equation}\label{eq_conv_i}
\mathcal{L}(\hat{\Pi},\Pi) \underset{N \to \infty}{\overset{L^1}{\longrightarrow}} 1.
\end{equation} 

\underline{In the second case $(ii)$}, if condition (\ref{microscopicregime}) is verified then the identification (\ref{identificationrang}) still holds and the proof of case $(i)$ adapts well. However, if (\ref{microscopicregime}) is not verified, we can still make a link with the toy model studied in Section \ref{toymodel}. Let's use a simple coupling argument: if $\sigma = \omega(N^{-1/2-\alpha})$ for some $\alpha \geq 0$, let's take $\sigma_1, \sigma_2 >0$ such that $$ \sigma^2 = \sigma_1^2 + \sigma_2^2 $$ and $$ N^{-7/6+\epsilon} \ll \sigma_1 \ll N^{-1/2-\alpha}, $$ fixing for instance $\sigma_1 = N^{-1}$. We will use the notation $\widetilde{v}_1$, now viewed as the leading eigenvector of the matrix 
\begin{equation*}
\widetilde{B} = A + \sigma_1 H + \sigma_2 \widetilde{H},
\end{equation*} where $\widetilde{H}$ is an independent copy of $H$. This has no consequence in terms of distribution : $(A,\widetilde{B})$ is still drawn under model $(\ref{GOEmodel})$. Let's denote $v'_1$ the leading eigenvector of $B_1=A+\sigma_1 H$, chosen so that $\langle v_1,v'_1\rangle >0$. It is clear that condition \eqref{microscopicregime} holds for $\sigma_1$. We have the following result, based on the invariance by rotation of the \emph{GOE} distribution:

\begin{lemma}
	\label{stillthelink}
	We still have the following equality in distribution:
	\begin{equation*}
	\left(r_1(v_1),r_1(\widetilde{v}_1)\right) \overset{(d)}{=} \left(r_1(X),r_1(X+\mathbf{s}Z) \right),
	\end{equation*} 
	where $X$, $Z$ are two standard Gaussian vectors from the toy model, with w.h.p. 
	\begin{equation*}
	\mathbf{s} \geq \mathbf{s^1} \asymp \sigma_1 N^{1/6}.
	\end{equation*} 
\end{lemma}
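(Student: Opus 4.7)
The plan is to split the proof into two parts: a distributional identity, which reuses the machinery of Section \ref{linkGOEtoy} without assumption \eqref{microscopicregime}, and a lower bound on $\mathbf{s}$, obtained by exploiting the two-step coupling $\widetilde{B} = B_1 + \sigma_2 \widetilde{H}$.

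The key observation for the distributional identity is that Lemma \ref{lemma_invrotation} and the derivation of equation \eqref{eq_v'1_wZ} rely only on the GOE rotational invariance and Remark \ref{densitylebesgue}, not on \eqref{microscopicregime}; the latter assumption intervenes only to collapse the explicit prefactor into $1 + o_{\mathbb{P}}(1)$. Since, conditionally on $A$, the matrix $\sigma_1 H + \sigma_2 \widetilde{H}$ is GOE of variance $\sigma^2 = \sigma_1^2 + \sigma_2^2$ and independent of $A$, I would reproduce the construction verbatim on $\widetilde{B}$: write $\widetilde{w} = v_1 + w$ for a non-normalized top eigenvector, invoke Lemma \ref{lemma_invrotation} to obtain that $w/\|w\|$ is uniform on the unit sphere of $v_1^\perp$ independent of $(v_1, \|w\|)$, then redo the algebra leading to \eqref{eq_v'1_wZ} to obtain $\widetilde{v}_1 \propto v_1 + \mathbf{S} Z/\|Z\|$ with $Z$ standard Gaussian independent of $v_1$; the proportionality constant is positive by the sign convention $\langle \widetilde{v}_1, v_1\rangle > 0$, so the ranks of $\widetilde{v}_1$ agree with those of $v_1 + \mathbf{S} Z/\|Z\|$. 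Setting $v_1 = X/\|X\|$ as in Section \ref{relevantlink} yields the distributional equality with $\mathbf{s} = \mathbf{S} \|X\|/\|Z\|$.

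For the lower bound, I would condition on $B_1$ and exploit that $\sigma_1$ satisfies \eqref{microscopicregime}. Proposition \ref{prop_gaussian_decomp} applied to $(A, B_1)$ gives $v'_1 = (1+o_{\mathbb{P}}(1))(v_1 + \mathbf{S^1} Z^1/\|Z^1\|)$, which implies $d_1 := \sqrt{1 - \langle v'_1, v_1\rangle^2} \asymp \sigma_1 N^{1/6}$ w.h.p. Since $\widetilde{H}$ is GOE and independent of $B_1$, applying the rotational-invariance argument of Part 1 to $\widetilde{v}_1$ viewed as the top eigenvector of $B_1 + \sigma_2 \widetilde{H}$ gives $\widetilde{v}_1 = \alpha' v'_1 + \beta' U_2$ with $\alpha'^2 + \beta'^2 = 1$, $\alpha' > 0$, and $U_2$ uniform on the unit sphere of $(v'_1)^\perp$, independent of $(\alpha', \beta')$ conditionally on $B_1$. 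Expanding
$$1 - \langle \widetilde{v}_1, v_1\rangle^2 = \bigl(\beta'^2 + \alpha'^2 d_1^2\bigr) - 2\alpha'\beta' \langle v'_1, v_1\rangle \langle U_2, v_1\rangle - \beta'^2 \langle U_2, v_1\rangle^2,$$
the leading term satisfies $\beta'^2 + \alpha'^2 d_1^2 \geq d_1^2$. Conditionally on $v'_1$, the variable $\langle U_2, v_1\rangle$ has mean zero and variance $d_1^2/(N-1)$; combining the AM-GM bound $2\alpha'\beta' d_1 \leq \beta'^2 + \alpha'^2 d_1^2$ with Chebyshev's inequality then shows that both the cross-term and the quadratic term are $o_{\mathbb{P}}(\beta'^2 + \alpha'^2 d_1^2)$ uniformly in $(\alpha', \beta')$. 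Hence $1 - \langle \widetilde{v}_1, v_1\rangle^2 \geq d_1^2 (1 + o_{\mathbb{P}}(1))$, and since $\mathbf{s}$ equals the tangent of the angle between $v_1$ and $\widetilde{v}_1$ up to the convergent factor $\|X\|/\|Z\|$, this translates into $\mathbf{s} \geq \mathbf{s^1}$ w.h.p.

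The main obstacle is the sign-indefinite cross-term $-2\alpha'\beta' \langle v'_1, v_1\rangle \langle U_2, v_1\rangle$, which in principle could reduce the lower bound; the saving grace is the AM-GM inequality, which bounds it by $1/\sqrt{N-1}$ times the leading term when $|\langle U_2, v_1\rangle|$ is at its typical size $d_1/\sqrt{N-1}$. The argument is uniform in the relative sizes of $\sigma_1$ and $\sigma_2$, handling in particular the delicate regime where they are of comparable order, and without requiring $\sigma$ itself to satisfy assumption \eqref{microscopicregime}.
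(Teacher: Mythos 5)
Your plan follows the same two-step route as the paper: first observe that the rotational-invariance decomposition $\widetilde{v}_1 \propto v_1 + \mathbf{S}\,Z/\|Z\|$ holds without assumption \eqref{microscopicregime} (this matches the paper, and you articulate it more explicitly), and second compare the angle of $\widetilde{v}_1$ with $v_1$ to that of $v'_1$ with $v_1$ by decomposing $\widetilde{v}_1 = \alpha' v'_1 + \beta' U_2$ and controlling the cross-term $\langle U_2, v_1\rangle$. Your bookkeeping via $1 - \langle\widetilde{v}_1,v_1\rangle^2$ with AM--GM plus Chebyshev is in fact cleaner than the paper's chain of inequalities, which passes through the step $\sqrt{p}\,\langle v_1,v'_1\rangle + \sqrt{1-p}\,N^{-4/3}g(N) \le \max(\sqrt{p},\sqrt{1-p})\,\langle v_1,v'_1\rangle$.

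However, the central claim that ``$U_2$ is uniform on the unit sphere of $(v'_1)^\perp$, independent of $(\alpha',\beta')$ conditionally on $B_1$'' is incorrect as stated, and this is the load-bearing step. The rotational-invariance argument of Lemma~\ref{lemma_invrotation} requires averaging over enough randomness in the matrix: for $O$ fixing only $v'_1$, one has $O^T B_1 O \neq B_1$ (since $O$ must commute with $B_1$, forcing $O$ to be $\pm 1$ on each eigendirection), so conditionally on $B_1$ the distribution of $\widetilde{w} = \widetilde{v}_1 - \langle\widetilde{v}_1,v'_1\rangle v'_1$ is not invariant under such rotations; a first-order expansion shows $\widetilde{w}$ is in fact biased toward directions $v'_i$ with small gap $\lambda'_1-\lambda'_i$. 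One does get uniformity of $U_2$ conditionally on $v'_1$ alone (averaging over the Haar-distributed remaining eigenbasis), but then you still need $U_2$ independent of $v_1$ given $v'_1$ to obtain $\var(\langle U_2,v_1\rangle \mid v'_1, v_1) = d_1^2/(N-1)$; that independence is not immediate, since both $U_2$ and the direction $b$ of $v_1$'s component in $(v'_1)^\perp$ depend on the spectral data of $B_1$. The paper's own proof asserts this independence (``$\widetilde{w}/\|\widetilde{w}\|$ ... independent of $p, v_1$ and $v'_1$'') without further justification, so your proposal matches the paper's level of rigor here; but you should be aware that the conditioning you wrote down ($B_1$ rather than $v'_1$) does not support the invariance argument, and the independence step deserves a careful treatment in both versions.
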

We refer to Appendix \ref{proof_stillthelink} for a proof. Since w.h.p. $\mathbf{s} \geq \mathbf{s^1}$ and $\mathbf{s^1} N \asymp \sigma_1 N^{7/6} \to \infty$, we have for all $1 \leq i \leq N$,
\begin{flalign*}
\mathbb{P}\left(\Pi_{+}(i)=\Pi(i)\right) &=  \mathbb{P}\left(\Pi_{+}(1)=\Pi(1)\right) \\
& = \mathbb{E} \left[\iint dx dz E(x) E(z) \phi_{x,z}(N,\mathbf{s}) \mathbf{1}_{\mathbf{s} N  \to \infty} \right] + o(1)\\
& = \iint dx dz E(x) E(z) \mathbb{E} \left[\phi_{x,z}(N,\mathbf{s}) \mathbf{1}_{\mathbf{s} N  \to \infty} \right] + o(1).
\end{flalign*}
With the same arguments as in the case $(i)$, we show that 
$\phi_{x,z}(N,\mathbf{s}) \mathbf{1}_{\mathbf{s} N  \to \infty} \overset{L^1}{\longrightarrow} 0,$ which implies
\begin{equation*}
\mathbb{E}\left[\mathcal{L}(\Pi_{+},\Pi)\right] \underset{N \to \infty}{\longrightarrow} 0,
\end{equation*}
hence $\mathcal{L}(\Pi_{+},\Pi) \underset{N \to \infty}{\overset{L^1}{\longrightarrow}} 0.$ The last step is to verify that the overlap achieved by $\Pi_{-}$ does not outperform that of $\Pi_{+}$. We prove the following Lemma in Appendix \ref{proof_lemma_Pi+caseii}:
\begin{lemma}
	\label{Pi+caseii}
	In the case $(ii)$, if $\langle v_1,v'_1 \rangle >0$, we also have
	\begin{equation*}
	\mathcal{L}(\Pi_{-},\Pi) \underset{N \to \infty}{\overset{L^1}{\longrightarrow}} 0.
	\end{equation*} 
\end{lemma}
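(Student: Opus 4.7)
The plan is to mirror the analysis of $\mathcal{L}(\Pi_+,\Pi)$ carried out in case $(ii)$ of Theorem~\ref{theorem_01law}, replacing the discordant-quadrant quantities $(\mathcal{N}^\pm, S^\pm)$ by their concordant-quadrant analogues $(\mathcal{M}^\pm, T^\pm)$ introduced below.

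First I would invoke Lemma~\ref{stillthelink} exactly as in the $\Pi_+$ argument: setting $Y := X + \mathbf{s} Z$, one has $(r_1(v_1), r_1(v'_1)) \overset{(d)}{=} (r_1(X), r_1(Y))$ with $\mathbf{s} \geq \mathbf{s^1} \asymp \sigma_1 N^{1/6}$ w.h.p., so $\mathbf{s} N \to \infty$ w.h.p. Next I would translate the event $\{\Pi_-(i) = i\}$: since $\Pi_-$ aligns $v_1$ with $-v'_1$ and the decreasing rank of $-a$ in $-v'_1$ equals $N+1$ minus the decreasing rank of $a$ in $v'_1$, this event reads $r_1(v_1)+r_1(v'_1)=N+1$, i.e. $r_1(X)+r_1(Y)=N+1$ in the toy model. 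Decomposing $\#\{j\neq 1 : X_j > X_1\}$ and $\#\{j\neq 1 : Y_j > Y_1\}$ into the four quadrant counts (which sum to $N-1$), the event simplifies to
\[
\mathcal{M}^+(X_1,Y_1) = \mathcal{M}^-(X_1,Y_1),
\]
where $\mathcal{M}^+(x,y) := \#\{j\neq 1 : X_j > x,\, Y_j > y\}$ and $\mathcal{M}^-(x,y) := \#\{j\neq 1 : X_j < x,\, Y_j < y\}$. With $T^\pm(x,y) := \mathbb{P}(X_1 \gtrless x,\, Y_1 \gtrless y)$, the triple $(\mathcal{M}^+,\mathcal{M}^-, N-1-\mathcal{M}^+-\mathcal{M}^-)$ is multinomial with parameters $(N-1;T^+_{x,z},T^-_{x,z},1-T^+_{x,z}-T^-_{x,z})$, and $\mathbb{P}(\mathcal{M}^+=\mathcal{M}^-)$ is therefore given by the exact analog of \eqref{phiexpression}, which I call $\psi_{x,z}(N,s)$, with $T^\pm_{x,z}$ in place of $S^\pm_{x,z}$.

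It would then remain to show $\mathbb{E}[\psi_{x,z}(N,\mathbf{s})] \to 0$ pointwise in $(x,z)$ and conclude by dominated convergence applied to $\iint E(x) E(z)\, \mathbb{E}[\psi_{x,z}(N,\mathbf{s})]\, dx\, dz$. For the pointwise step I would reuse the Stirling bound from the proof of Proposition~\ref{prop_zero_one_toy}$(ii)$: the maximal binomial term $\max_k \binom{N-1}{k}(T^+_{x,z})^k (1-T^+_{x,z})^{N-1-k}$ is $O\bigl((N T^+_{x,z}(1-T^+_{x,z}))^{-1/2}\bigr)$, which vanishes as soon as $T^+_{x,z}$ lies in a compact subinterval of $(0,1)$. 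This is the case pointwise in $(x,z)$ uniformly in $s>0$: as $s\to 0$, $T^+(x,x+sz)\to 1-F(x)\in(0,1)$; as $s\to\infty$, $T^+(x,x+sz)\to (1-F(x))(1-F(z))\in(0,1)$; and $T^+$ interpolates continuously between these two finite limits.

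The main obstacle is the regime $\mathbf{s}\to 0$ with $\mathbf{s} N\to\infty$: there $T^+_{x,z}+T^-_{x,z}\to 1$, so the ``rest'' factor that the Fibonacci identity (Lemma~\ref{fibo}) controlled by $O(1)$ in the proof of Proposition~\ref{prop_zero_one_toy}$(ii)$ is no longer obviously bounded here. I would circumvent this by replacing the $\psi_{x,z}(N,s)\leq M(N,s)\cdot O(1)$ chain by a direct local central limit estimate for the difference $\mathcal{M}^+ - \mathcal{M}^-$ of multinomial marginals: its variance $(N-1)[T^+_{x,z}+T^-_{x,z}-(T^+_{x,z}-T^-_{x,z})^2]$ remains $\Theta(N)$ uniformly on the regime of interest (since for generic $(x,z)$ neither $T^+$ nor $T^-$ concentrates at $0$ or $1$), yielding $\psi_{x,z}(N,s)=O(N^{-1/2})$ pointwise. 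Once this uniform bound is in hand, the averaging over the random $\mathbf{s}$ follows the exact template used in the $\Pi_+$ analysis of case $(ii)$ of Theorem~\ref{theorem_01law}.
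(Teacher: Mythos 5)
Your reduction is exactly the paper's: the paper treats the pair $(X,-Y)$ and defines $\widetilde{S}^{\pm}(x,y):=\mathbb{P}(X_1>x,\,-Y_1<-y)$ and $\mathbb{P}(X_1<x,\,-Y_1>-y)$, which are precisely your $T^{\pm}$, observes that both are bounded away from $0$ and $1$ uniformly in $s$ for fixed $(x,z)$, and concludes that the proof of Proposition \ref{prop_zero_one_toy}$(ii)$ adapts; your identification of $\{\Pi_-(1)=1\}$ with $\{\mathcal{M}^+=\mathcal{M}^-\}$ and the max-binomial-term bound $O(N^{-1/2})$ are the same steps. The only divergence is your final paragraph, and the "main obstacle" you flag there is in fact not one: the closed form from Lemma \ref{fibo} gives, after the same simplifications as in step $(a)$, a residual factor equal to $\frac{1-T^+_{x,z}}{1-T^+_{x,z}+T^-_{x,z}}\bigl(1-(\cdot)^N\bigr)$ with $\bigl|\cdot\bigr|=T^-_{x,z}/(1-T^+_{x,z})\le 1$, hence a factor at most $2$ for every $s>0$ — the bound $T^++T^-<1$ (which holds strictly since $1-T^+-T^-=S^++S^->0$) is all that is needed, not that $T^++T^-$ stay away from $1$. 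So the original chain $\psi_{x,z}\le M(N,s)\cdot O(1)$ survives verbatim; your local-CLT substitute (variance of $\mathcal{M}^+-\mathcal{M}^-$ of order $N$, hence point probabilities $O(N^{-1/2})$) is a valid alternative but an unnecessary detour.
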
 Lemma \ref{Pi+caseii} then gives
\begin{equation*}
\mathbb{E}\left[\mathcal{L}(\hat{\Pi},\Pi)\right] \leq \mathbb{E}\left[\mathcal{L}(\Pi_{+},\Pi)\right] + \mathbb{E}\left[\mathcal{L}(\Pi_{-},\Pi)\right] \underset{N \to \infty}{\longrightarrow} 0,
\end{equation*}and thus 
\begin{equation}\label{eq_conv_ii}
\mathcal{L}(\hat{\Pi},\Pi) \underset{N \to \infty}{\overset{L^1}{\longrightarrow}} 0.
\end{equation} 

Of course, the convergences in \eqref{eq_conv_i} and \eqref{eq_conv_ii} also hold in probability, by Markov's inequality.
\end{proof}

\appendix

\section{Additionnal proofs for Section \ref{linkGOEtoy}} \label{section3_add_proofs}
Throughout the proofs, all variables denoted by $C_i$ with $i=1,2,\ldots$ are unspecified, independent, positive constants.

\subsection{Proof of Proposition \ref{w'expansion}}
\label{appendix_proof_prop_w'}
\begin{proof}[Proof of Proposition \ref{w'expansion}]
	Let us establish a first inequality: since the GOE distribution is invariant by rotation (see e.g. \cite{Anderson09}), the random variables $\langle H v_j,v_i \rangle$ are zero-mean Gaussian, with variance $1/N$ of $i \neq j$ and $2/N$ if $i=j$. Hence, w.h.p.
	\begin{equation}
	\label{ineq_encadrementgaussiennes}
	\underset{1 \leq i,j \leq N}{\sup} \left|\langle H v_j,v_i \rangle \right|\leq C_1 \sqrt{\frac{\log N}{N}}.
	\end{equation} We will use the following short-hand notation for $1 \leq i,j \leq N$:
	\begin{equation*}
	m_{i,j} := \langle H v_j,v_i \rangle,
	\end{equation*} 
	The defining eigenvector equations projected on vectors $v_i$ write
	\begin{equation}
	\left \{
	\begin{array}{c @{=} c}
	\theta_i & \dfrac{\sigma}{\lambda'_1 - \lambda_i} \sum_{j=1}^{N} \theta_j m_{i,j}, \\
	\lambda'_1 - \lambda_1 & \sigma  \sum_{j=1}^{N} \theta_j m_{1,j}. \\
	\end{array}
	\right.
	\end{equation}
	In order to approximate the $\theta_i$ variables, we define the following iterative scheme:
	\begin{equation}
	\label{schemadepicard}
	\left \{
	\begin{array}{c @{\; = } c}
	\theta_i^k & \dfrac{\sigma}{\lambda_1^{k-1} - \lambda_i} \sum_{j=1}^{N} \theta_j^{k-1} m_{i,j} ,\\
	\lambda^{k}_1 - \lambda_1 & \sigma  \sum_{j=1}^{N} \theta_j^{k-1} m_{1,j} ,
	\end{array}
	\right.
	\end{equation} with initial conditions $\left(\theta_i^{0}\right)_{2 \leq i \leq N}=0$ and $\lambda_{1}^0 = \lambda_1$, and setting $\theta_1^{k}=1$ for all $k$. For $k \geq 1$, define
	\begin{equation*}
	\Delta_k := \sum_{i \geq 2} \left|\theta_i^k - \theta_i^{k-1}\right|,
	\end{equation*} 
	and for $k \geq 0$,
	\begin{equation*}
	S_k := \sum_{i \geq 1} \left|\theta_i^k \right|.
	\end{equation*} Recall that under assumption (\ref{microscopicregime}), there exists $\alpha>0$ such that $\sigma = o \left(N^{-1/2-\alpha}\right)$. We define $\epsilon$ as follows: 
	\begin{equation*}
	\epsilon = \epsilon(N) = \sqrt{\sigma N^{1/2+\alpha}}.
	\end{equation*} The idea is to show that the sequence $\left\{\Delta_k \right\}_{k \geq 1}$ decreases geometrically with $k$ at rate $\epsilon$. More specifically, we show the following result:
	\begin{lemma}
		\label{propagation_picard}
		With the same notations and under the assumption (\ref{microscopicregime}) of Proposition \ref{w'expansion}, one has w.h.p.
	\begin{itemize}
		\item[$(i)$] $\forall k \geq 1, \; \Delta_k \leq \Delta_1 \epsilon^{k-1}$, \\
		\item[$(ii)$] $\forall k \geq 0, \forall \, 2 \leq i \leq N, \; \left|\lambda_{1}^k - \lambda_i\right| \geq \frac{1}{2} \left|\lambda_{1} - \lambda_i\right| \left(1- \epsilon - \ldots - \epsilon^{k-1}\right)$, \\
		\item[$(iii)$] $\forall k \geq 0, \; S_k \leq 1 +(1+\ldots+\epsilon^{k-1}) \Delta_1$,\\
		\item[$(iv)$]  $\sum_{i=2}^{N} \left| \theta_i - \theta_i^{1} \right|^2 = o\left(\sum_{i=2}^{N} \left| \theta_i^{1} \right|^2 \right)$.
	\end{itemize}
	\end{lemma}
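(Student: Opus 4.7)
The plan is to establish $(i)$, $(ii)$, $(iii)$ by simultaneous induction on $k$, and then to deduce $(iv)$ by passing to the limit. The base case $k = 0$ is immediate for $(ii)$ and $(iii)$ since $\lambda_1^0 = \lambda_1$ and $S_0 = 1$. To bound $\Delta_1 = \sum_{i \geq 2}|\theta_i^1|$, I would observe that $\theta_i^1 = \sigma\, m_{i,1}/(\lambda_1 - \lambda_i)$ and apply Cauchy--Schwarz using the identity $\sum_i m_{i,1}^2 = \|H v_1\|^2 = O(1)$ (the standard $O(1)$ bound on the GOE operator norm) together with Lemma \ref{lemma_concentrationksi}; this should yield $\Delta_1 = O(\sigma N^{1/6})$ w.h.p., which under \eqref{microscopicregime} is $o(\epsilon)$ for $\epsilon = \sqrt{\sigma N^{1/2+\alpha}}$.

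For the induction step, using \eqref{schemadepicard} and the identity $\sigma \sum_j \theta_j^{k-1} m_{i,j} = (\lambda_1^{k-1} - \lambda_i)\,\theta_i^k$, I would rewrite the increment as
\begin{equation*}
\theta_i^{k+1} - \theta_i^k \;=\; \frac{\sigma}{\lambda_1^k - \lambda_i}\sum_j (\theta_j^k - \theta_j^{k-1})\, m_{i,j} \;+\; \frac{\lambda_1^{k-1} - \lambda_1^k}{\lambda_1^k - \lambda_i}\,\theta_i^k.
\end{equation*}
Summing absolute values over $i \geq 2$, using $(ii)$ at rank $k$ to bound denominators below by $\tfrac{1}{2}|\lambda_1 - \lambda_i|$, then combining Cauchy--Schwarz with Lemma \ref{lemma_concentrationksi} and the $O(1)$ bound on the operator norm of $M = (m_{i,j})$, the first term should be controlled by $C\sigma N^{1/6}\,\Delta_k$ and the second by $C\sigma N^{-1/3}\sqrt{\log N}\,\Delta_{k-1}$ (the latter using also the eigenvalue update $|\lambda_1^k - \lambda_1^{k-1}| \leq C\sigma\sqrt{\log N/N}\,\Delta_{k-1}$ coming from the second line of \eqref{schemadepicard}). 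The choice of $\epsilon$ is calibrated precisely so that both contributions fit inside $\tfrac{1}{2}\epsilon\,\Delta_k$ once the inductive relation $\Delta_{k-1} = \Delta_k/\epsilon$ is inserted, closing $(i)$ at rank $k+1$. Telescoping the eigenvalue updates yields $|\lambda_1^{k+1} - \lambda_1| = o(N^{-1/6})$, far below the smallest edge gap $|\lambda_1 - \lambda_2| \asymp N^{-1/6}$, which proves $(ii)$ at rank $k+1$; $(iii)$ then follows from the triangle inequality $S_{k+1} \leq S_0 + \sum_{\ell=1}^{k+1}\Delta_\ell$ combined with $(i)$.

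Once $(i)$--$(iii)$ are in place, the geometric decay in $(i)$ shows that $(\theta^k)_k$ is Cauchy (in $\ell^1$ and hence in $\ell^2$) and therefore converges; by the normalization $\theta_1^k = 1$ and the a.s.\ uniqueness of the eigenvector associated to $\lambda'_1$ (Remark \ref{densitylebesgue}), its limit must coincide with $(\theta_i)$. An $\ell^2$ version of the same contraction gives $\|\theta^{k+1} - \theta^k\|_2 \leq C\sigma N^{1/6}\,\|\theta^k - \theta^{k-1}\|_2$, so $\|\theta - \theta^1\|_2 = O(\sigma N^{1/6})\,\|\theta^1\|_2 = o(\|\theta^1\|_2)$, and squaring yields $(iv)$. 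I expect the main obstacle to be the inductive control in $(i)$: a naive term-by-term bound would cost a factor $N^{1/6}$ coming from near-edge eigenvalues with $\lambda_1 - \lambda_i \asymp N^{-1/6}$, so one must systematically combine Cauchy--Schwarz with the sharp rigidity estimate of Lemma \ref{lemma_concentrationksi}. The regime \eqref{microscopicregime} enters precisely at this point to ensure that the effective contraction rate $\sigma N^{1/6}$ is $o(1)$, so that the whole iteration converges geometrically at rate $\epsilon$.
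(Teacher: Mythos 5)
Your overall architecture (simultaneous induction on $(i)$--$(iii)$, with each increment split into a ``new noise'' term and an ``eigenvalue shift'' term, then $(iv)$ obtained by summing the geometric series) is the same as the paper's, but the quantitative engine you propose does not close. The central gap is the claimed contraction rate $C\sigma N^{1/6}$ for the first term. Your Cauchy--Schwarz/operator-norm route gives, for a generic increment $\delta=\theta^k-\theta^{k-1}$,
\[
\sum_{i\ge2}\frac{\sigma}{|\lambda_1-\lambda_i|}\Bigl|\sum_j\delta_j m_{i,j}\Bigr|\;\le\;\sigma\Bigl(\sum_i\tfrac{1}{(\lambda_1-\lambda_i)^2}\Bigr)^{1/2}\|M\delta\|_2\;=\;O\left(\sigma N^{2/3}\right)\|\delta\|_2,
\]
and Lemma \ref{lemma_concentrationksi} cannot be invoked to improve $N^{2/3}$ to $N^{1/6}$: that lemma is a concentration statement for the specific vector $(m_{i,1})_i$, which is Gaussian with variance $1/N$ and \emph{independent of the eigenvalues}; the vector $M\delta$ depends on the whole iteration history and enjoys no such delocalization relative to the weights $1/(\lambda_1-\lambda_i)^2$. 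The same $\ell^1$/$\ell^2$ confusion appears in your base case: $\|\theta^1\|_2\asymp\sigma N^{1/6}$, but $\Delta_1=\|\theta^1\|_1$ is of order $\sigma N^{1/2}$ up to sub-polynomial factors, and Cauchy--Schwarz with $\sum_i m_{i,1}^2=O(1)$ only yields $O(\sigma N^{2/3})$, not $O(\sigma N^{1/6})$. The paper instead bounds the inner sums termwise by $\sup_{i,j}|m_{i,j}|\le C\sqrt{\log N/N}$ and sums the denominators in $\ell^1$ via $\sum_i 1/(\lambda_1-\lambda_i)=O(N^{1+\delta})$ (Lemma \ref{lemma_sommevp1}), yielding the contraction rate $\sigma N^{1/2+\delta}$; assumption \eqref{microscopicregime} is calibrated exactly so that this rate, and not $\sigma N^{1/6}$, is $o(\epsilon)$. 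If the rate were really $\sigma N^{1/6}$, the lemma would hold for all $\sigma=o(N^{-1/6-\alpha})$, which the authors explicitly say they expect but do not prove (Remarks \ref{remark_assumption6} and \ref{microscopicregimenotoptimal}).

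A second, smaller error concerns $(ii)$: you compare $|\lambda_1^k-\lambda_1|=o(N^{-1/6})$ with ``the smallest edge gap $|\lambda_1-\lambda_2|\asymp N^{-1/6}$''. In this normalization the edge gap is of order $N^{-2/3}$ (Lemma \ref{lemma_controletrousp}); $N^{-1/6}$ is the scale of $\sqrt N(\lambda_1-\lambda_2)$. Since $N^{-2/3}=o(N^{-1/6})$, a drift that is merely $o(N^{-1/6})$ does not stay below the gap; one needs $|\lambda_1^k-\lambda_1|\le\frac12(\lambda_1-\lambda_2)$, which the paper obtains from $\sigma^2 N^{\delta}=o\bigl(N^{-2/3}(\log N)^{-C\log\log N}\bigr)$. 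Finally, for $(iv)$ the paper does not run an $\ell^2$ contraction at rate $\sigma N^{1/6}$ (which, as above, is not available); it derives a pointwise-in-$i$ bound on $|\theta_i^k-\theta_i^{k-1}|$ proportional to $1/(\lambda_1-\lambda_i)$, sums over $k$, squares and sums over $i$ via Lemma \ref{lemma_sommevp2}, and compares against $\|\theta^1\|_2^2\asymp\sigma^2N^{1/3}$ from Lemma \ref{lemma_concentrationksi}.
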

	This Lemma is proved in the next section. Equation $(iv)$ of Lemma \ref{propagation_picard} yields
	\begin{flalign*}
	w' & = v_1 + \sum_{i=2}^{N} \theta_i^1 v_i + \sum_{i=2}^{N} \left(\theta_i - \theta_i^1\right) v_i\\
	& = v_1 + \sigma \sum_{i=2}^{N} \frac{\langle Hv_i,v_1 \rangle }{\lambda_1 - \lambda_i} v_i + o_{\mathbb{P}}\left(\sigma \sum_{i=2}^{N} \frac{\langle Hv_i,v_1 \rangle }{\lambda_1 - \lambda_i} v_i  \right). 
	\end{flalign*}
\end{proof}

\subsection*{Proof of Lemma \ref{propagation_picard}}
\begin{proof}[Proof of Lemma \ref{propagation_picard}]
In this proof we will use the same notations as defined in the proof of Proposition \ref{w'expansion}, and we make the assumption $(\ref{microscopicregime})$. We now state three technical lemmas controlling some statistics of eigenvalues in the \emph{GOE} which are useful hereafter.

\begin{lemma}
	\label{lemma_sommevp1}
	W.h.p., for all $\delta>0$,
	\begin{equation}
	\label{sommevp1eq}
	\sum_{j = 2}^{N} \frac{1}{\lambda_1 - \lambda_j} \leq O\left( N ^{1+\delta}\right).
	\end{equation}
\end{lemma}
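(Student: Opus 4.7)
The plan is to combine the Erdős--Schlein--Yau rigidity of GOE eigenvalues with a lower bound on the top spectral gap $\lambda_1 - \lambda_2$. Write $\gamma_i$ for the classical locations defined by the Wigner semicircle law on $[-2,2]$, which satisfy $\gamma_1 - \gamma_j \asymp (j/N)^{2/3}$ for $1 \leq j \leq N/2$ and $\gamma_1 - \gamma_j = \Theta(1)$ for $j \geq N/2$. The two inputs are:
\begin{itemize}
\item[(a)] \emph{Rigidity}: for every $\eta > 0$, w.h.p.\ one has $|\lambda_i - \gamma_i| \leq N^{-2/3+\eta}\min(i, N+1-i)^{-1/3}$ for all $i$.
\item[(b)] \emph{Top gap}: for every $\eta > 0$, w.h.p.\ $\lambda_1 - \lambda_2 \geq N^{-2/3-\eta}$.
\end{itemize}

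Given $\delta > 0$, I would fix $\eta = \delta/4$ and split the sum at $N_0 := \lfloor N^{1/3+\delta/2}\rfloor$. On the \emph{edge block} $2 \leq j \leq N_0$, the trivial monotonicity $\lambda_1 - \lambda_j \geq \lambda_1 - \lambda_2$ combined with (b) gives
\begin{equation*}
\sum_{j=2}^{N_0} \frac{1}{\lambda_1 - \lambda_j} \leq N_0 \cdot N^{2/3 + \eta} \leq N^{1 + 3\delta/4}.
\end{equation*}
On the \emph{bulk block} $j > N_0$, the choice of $N_0$ guarantees $\gamma_1 - \gamma_j \geq c (j/N)^{2/3} \gg N^{-2/3+\eta}$, so by (a), $\lambda_1 - \lambda_j \geq \tfrac{1}{2}(\gamma_1 - \gamma_j)$ for $N_0 < j \leq N/2$, and $\lambda_1 - \lambda_j \geq 1/2$ for $j > N/2$. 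Summing yields
\begin{equation*}
\sum_{j > N_0} \frac{1}{\lambda_1 - \lambda_j} \leq C\, N^{2/3}\sum_{j \geq 1} j^{-2/3} + 2N \leq C' N,
\end{equation*}
and combining the two blocks gives $O(N^{1+\delta})$ with high probability. Note that the target bound is deliberately very loose (the typical value of the sum is already $O(N)$), so the splitting has plenty of slack.

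The main technical obstacle is justifying the top-gap estimate (b): it does \emph{not} follow from rigidity alone, because the rigidity fluctuation at the edge is of the same order $N^{-2/3}$ as the typical gap itself. I would invoke it from standard edge-universality results for Wigner matrices (e.g.\ Erd\H{o}s--Yau, or Bourgade--Erd\H{o}s--Yau--Yin), where the joint convergence of the top rescaled eigenvalues to the Airy-$1$ point process implies $N^{2/3}(\lambda_1 - \lambda_2)$ converges in distribution to a positive random variable, and in particular $\lambda_1 - \lambda_2 \geq N^{-2/3-\eta}$ w.h.p.\ for every $\eta > 0$. The auxiliary estimates on the classical locations $\gamma_i$ follow from the explicit semicircle density $\rho_{\mathrm{sc}}(x) = \frac{1}{2\pi}\sqrt{(4-x^2)_+}$ and are routine.
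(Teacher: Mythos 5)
Your argument is correct and takes essentially the same route as the paper: both combine the Erd\H{o}s et al.\ rigidity estimate with a separate lower bound on the top gap $\lambda_1-\lambda_2$ (the paper's Lemma \ref{lemma_controletrousp}, deduced from the Tracy--Widom/Forrester convergence of the rescaled top eigenvalues, exactly as you propose), and both split the sum into an edge block controlled by the top gap and a bulk block controlled by rigidity together with $\gamma_1-\gamma_j\asymp (j/N)^{2/3}$. One small slip: the series $\sum_{j\ge 1}j^{-2/3}$ diverges, so your bulk bound should be written as $N^{2/3}\sum_{2\le j\le N}j^{-2/3}=O\left(N^{2/3}\cdot N^{1/3}\right)=O(N)$; the conclusion is unaffected. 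The only structural difference is that the paper uses a third block $j>N^{1-\delta}$ handled by Cauchy--Schwarz against the $L^2$ bound of Lemma \ref{lemma_sommevp2}, whereas you treat the whole bulk directly via the semicircle locations, which is marginally more self-contained.
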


\begin{lemma}
	\label{lemma_sommevp2}
	We have
	\begin{equation}
	\label{sommevp2eq}
	\; \sum_{j = 2}^{N} \frac{1}{\left(\lambda_1 - \lambda_j \right)^2} \asymp N^{4/3}.
	\end{equation} 
\end{lemma}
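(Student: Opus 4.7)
The plan is to exploit the precise asymptotics of the largest eigenvalues of a GOE matrix, specifically the edge scaling $\lambda_1 - \lambda_j \asymp (j/N)^{2/3}$ which follows from the semicircle density vanishing like a square root at the edge. Under the normalization of \eqref{GOEmodel} the spectrum concentrates on $[-2,2]$ with semicircle density $\rho_{sc}(x) = \frac{1}{2\pi}\sqrt{4-x^2}$. The classical location $\gamma_j$ of the $j$-th largest eigenvalue is characterized by $\int_{\gamma_j}^{2} \rho_{sc}(x)\, dx = (j-1/2)/N$; since $\rho_{sc}(2-t) \sim \sqrt{t}/\pi$ as $t \to 0^+$, an elementary inversion gives $2-\gamma_j \asymp (j/N)^{2/3}$ for $1 \leq j \leq N/2$, and $2-\gamma_j$ is bounded below by a positive constant for $j \geq N/2$.

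The first step is to invoke a standard GOE edge rigidity estimate (e.g. from \cite{Anderson09} or the Erdős--Schlein--Yau rigidity results) stating that, with high probability and up to subpolynomial factors, $|\lambda_j - \gamma_j|$ is of smaller order than $2-\gamma_j$ for every $j$. Combined with $2-\lambda_1 \asymp N^{-2/3}$ (from Tracy--Widom asymptotics) and the standard lower bound on the top eigenvalue gap $\lambda_1 - \lambda_2 \asymp N^{-2/3}$ (which follows from the joint convergence of $N^{2/3}(2-\lambda_1, 2-\lambda_2)$ to a non-degenerate limit), one obtains
\begin{equation*}
\lambda_1 - \lambda_j \asymp (j/N)^{2/3} \quad \text{for } 2 \leq j \leq N/2, \qquad \lambda_1 - \lambda_j \asymp 1 \quad \text{for } N/2 < j \leq N.
\end{equation*}

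Once this scaling is in hand, the sum is evaluated by splitting at $j = N/2$. The edge contribution dominates and gives
\begin{equation*}
\sum_{j=2}^{\lfloor N/2 \rfloor} \frac{1}{(\lambda_1 - \lambda_j)^2} \asymp N^{4/3} \sum_{j=2}^{\lfloor N/2 \rfloor} j^{-4/3} \asymp N^{4/3},
\end{equation*}
since $\sum_{j \geq 2} j^{-4/3}$ is a convergent series. The bulk contribution is at most $O(N)$, which is negligible compared to $N^{4/3}$, yielding the upper bound. The matching lower bound follows at once from the single $j=2$ term:
\begin{equation*}
\sum_{j=2}^{N} \frac{1}{(\lambda_1 - \lambda_j)^2} \geq \frac{1}{(\lambda_1 - \lambda_2)^2} \asymp N^{4/3}.
\end{equation*}

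The main obstacle is arguably just bookkeeping with the rigidity estimate: one needs enough control on all edge eigenvalues simultaneously, not only on $\lambda_1$ and $\lambda_2$. However, since the paper's notation $\asymp$ allows subpolynomial slack in $N$, it suffices to use the rather coarse (and well-documented) edge localization estimates, rather than the sharp optimal rigidity bounds. No new random matrix input is required beyond what can be found in standard textbook treatments.
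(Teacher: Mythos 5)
Your proposal is correct and follows essentially the same route as the paper: semicircle classical locations with $2-\gamma_j \asymp (j/N)^{2/3}$ at the edge, a rigidity estimate, an edge/bulk split with the convergent series $\sum_j j^{-4/3}$ controlling the edge contribution, and a lower bound coming from the non-degenerate limit of $N^{2/3}(\lambda_1-\lambda_2)$. The only place where your bookkeeping is looser than the paper's is for the first subpolynomially many indices $j$, where the rigidity error (of order $j^{-1/3}N^{-2/3}$ times a subpolynomial factor) is \emph{not} small compared to $\gamma_1-\gamma_j$, so the uniform claim $\lambda_1-\lambda_j\asymp (j/N)^{2/3}$ does not follow from rigidity alone there; the paper isolates exactly this range as a separate set and bounds its contribution using only the two-sided control of $\lambda_1-\lambda_2$ and the (subpolynomial) number of terms, which is the fix your remark about subpolynomial slack implicitly relies on.
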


\begin{lemma}
	\label{lemma_controletrousp}
	For any $C>0$, w.h.p.
	\begin{equation}
	\label{controletrouspeq}
	\lambda_1 - \lambda_2 \geq N^{-2/3} \left(\log N\right)^{-C \log \log N}.
	\end{equation}
\end{lemma}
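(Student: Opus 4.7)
The scale $N^{-2/3}$ is exactly the typical spacing of eigenvalues at the soft edge of the normalized GOE, so the lemma asserts that the top gap is not anomalously small by more than a sub-polynomial factor. I would exploit this by combining edge universality (which localizes $(\lambda_1,\lambda_2)$ in a window of size $\asymp N^{-2/3}$ around $2$) with the level-repulsion factor built into the GOE joint density to control the probability of an anomalously small gap.

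Concretely, the first step is edge rigidity: for any $A>0$, with high probability
$$\lambda_1, \lambda_2 \in \bigl[2 - L\, N^{-2/3},\; 2 + L\, N^{-2/3}\bigr], \qquad L := (\log N)^{A \log \log N},$$
which follows from standard rigidity estimates (Erd\H{o}s--Yau--Yin) at the soft edge. The second step is a direct computation in the GOE joint density
$$p(\lambda_1, \ldots, \lambda_N) \;\propto\; \prod_{i<j} |\lambda_i - \lambda_j|\, \exp\!\bigl(-\tfrac{N}{4}\textstyle\sum_i \lambda_i^2\bigr),$$
which contains the explicit level-repulsion factor $|\lambda_1 - \lambda_2|$. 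After integrating out $\lambda_3,\ldots,\lambda_N$, the marginal density of the top pair inherits this factor, so on the rigidity window the probability that the gap is at most $\varepsilon\, N^{-2/3}$ is bounded by something of order $\varepsilon^{2} L^{2}$ (one factor of $\varepsilon$ from integrating over the width of the strip $\{\lambda_1 - \lambda_2 \leq \varepsilon\,N^{-2/3}\}$, another from the repulsion factor, and a polynomial factor $L$ accounting for the localization window). Taking $\varepsilon = (\log N)^{-C \log \log N}$ and choosing $A < C$ gives
$$\mathbb{P}\!\left(\lambda_1 - \lambda_2 \leq N^{-2/3} (\log N)^{-C \log \log N}\right) \leq (\log N)^{-2(C-A)\log\log N} \xrightarrow[N\to\infty]{} 0.$$

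Equivalently, one could simply invoke the Tracy--Widom edge limit: $N^{2/3}(\lambda_1 - \lambda_2)$ converges in distribution to the gap $G$ between the first two points of the Airy$_1$ point process, whose density vanishes linearly at $0$ (so $\mathbb{P}(G \leq \delta) \leq c \delta^{2}$), and then apply a quantitative version of edge universality (e.g.\ Bourgade--Erd\H{o}s--Yau or Schnelli--Xu) to transfer this tail bound to finite $N$ with a polynomial error $N^{-\alpha}$. The bound $(\log N)^{-C \log \log N}$ is extremely forgiving, so any polynomial-in-$\delta$ control on the normalized gap distribution, combined with any $N^{o(1)}$-type edge rigidity, closes the argument.

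The main obstacle is the quantitative character of the argument: neither the Airy$_1$ limit nor the joint density formula immediately yields a bound uniform in $N$, and some care is required in either route (tracking the $L$-factor loss from rigidity, or quoting the exact rate in edge universality). The sub-polynomial slack $(\log N)^{-C \log \log N}$ built into the statement is however precisely the cushion needed to absorb the polylog losses that arise naturally in edge rigidity for the GOE.
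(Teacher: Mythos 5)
Your proposal is correct in substance but considerably more elaborate than the paper's argument, and the "main obstacle" you identify at the end is not actually an obstacle. The paper's proof is the soft version of your second route: it invokes the convergence in distribution of the rescaled edge eigenvalues $\left(N^{2/3}(\lambda_1-2),N^{2/3}(\lambda_2-2)\right)$ to a joint limit with a continuous density (Forrester, Tracy--Widom), so that $N^{2/3}(\lambda_1-\lambda_2)$ converges in law to a random variable $G$ supported on $\mathbb{R}_+$ with no atom at $0$. Since the threshold $\varepsilon_N:=(\log N)^{-C\log\log N}$ tends to $0$, weak convergence alone gives, for every fixed continuity point $\delta>0$, $\liminf_N \mathbb{P}\left(N^{2/3}(\lambda_1-\lambda_2)\geq \varepsilon_N\right)\geq \mathbb{P}(G>\delta)$, and letting $\delta\to 0$ yields the claim; no quantitative tail bound $\mathbb{P}(G\leq\delta)\lesssim\delta^2$, no level-repulsion computation in the joint density, no rigidity estimate, and no quantitative edge universality are required. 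Your first route (rigidity plus the Vandermonde repulsion factor, or alternatively a quantitative edge universality transfer) would prove a stronger, effective statement, but at the cost of heavier machinery and a marginalization step you only sketch; the paper deliberately exploits the fact that the sub-polynomial threshold makes the purely qualitative distributional limit sufficient.
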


Proofs of these three Lemmas can be found in the next sections. We will work under the event (that occurs w.h.p.) on which the equations (\ref{sommevp1eq}), (\ref{sommevp2eq}), (\ref{controletrouspeq}), (\ref{lemma_concentrationksieq}) and (\ref{ineq_encadrementgaussiennes}) are satisfied. We show the following inequalities:
	
	\begin{itemize}
		\item[$(i)$] $\forall k \geq 1, \; \Delta_k \leq \Delta_1 \epsilon^{k-1}$, \\
		\item[$(ii)$] $\forall k \geq 0, \forall \, 2 \leq i \leq N, \; \left|\lambda_{1}^k - \lambda_i\right| \geq \frac{1}{2} \left|\lambda_{1} - \lambda_i\right| \left(1- \epsilon - \ldots - \epsilon^{k-1}\right)$, \\
		\item[$(iii)$] $\forall k \geq 0, \; S_k \leq 1 + (1+\ldots+\epsilon^{k-1}) \Delta_1$.\\
	\end{itemize}
	
	Recall that $\epsilon$ is given by
	\begin{equation*}
	\epsilon = \epsilon(N) = \sqrt{\sigma N^{1/2+\alpha}}.
	\end{equation*}
	
	We will denote by $f_{i}(N)$, with $i$ an integer, functions as defined in Lemma \ref{lemma_sommevp2}. All the following inequality will be valid for $N$ large enough (uniformly in $i$ and in $k$).\\
	
	\textbf{Step 1: propagation of the first equation.}
	Let $k \geq 3$. We work by induction, assuming that $(i)$, $(ii)$ and $(iii)$ are verified until $k-1$. 
	\begin{flalign*}
	\left| \theta_i^k - \theta_i^{ k-1} \right| & \leq \left| \frac{\sigma}{\lambda_1^{k-1}-\lambda_i} \sum_{j=2}^{N} \left(\theta_j^{k-1} - \theta_j^{k-2}\right)m_{i,j} \right|  + \left| \frac{\sigma \left(\lambda_1^{k-2}-\lambda_1^{k-1}\right)}{\left(\lambda_1^{k-1}-\lambda_i\right)\left(\lambda_1^{k-2}-\lambda_i\right)} \sum_{j=1}^{N} \theta_j^{k-2} m_{i,j} \right|\\
	& \leq \frac{ \sigma}{\left|\lambda_1^{k-1}-\lambda_i\right|} C_1 \sqrt{\frac{\log N}{N}} \Delta_{k-1} + \sigma C_1 \sqrt{\frac{\log N}{N}} S_{k-2} \frac{\left|\lambda_1^{k-2}-\lambda_1^{k-1}\right|}{\left|\lambda_1^{k-1}-\lambda_i\right|\left|\lambda_1^{k-2}-\lambda_i\right|} \\ 
	& \overset{(a)}{\leq}  \sigma \frac{3}{\left|\lambda_1-\lambda_i\right|} C_1 \sqrt{\frac{\log N}{N}} \Delta_{k-1} + \sigma C_1 \sqrt{\frac{\log N}{N}} S_{k-2} \frac{9 \left|\lambda_1^{k-2}-\lambda_1^{k-1}\right|}{\left|\lambda_1-\lambda_i\right|^2} \\ 
	& \overset{(b)}{\leq}  \sigma \frac{3}{\left|\lambda_1-\lambda_i\right|} C_1 \sqrt{\frac{\log N}{N}} \Delta_{k-1} + \sigma C_1 \sqrt{\frac{\log N}{N}} 2 \frac{9 \left|\lambda_1^{k-2}-\lambda_1^{k-1}\right|}{\left|\lambda_1-\lambda_i\right|^2} .
	\end{flalign*} We applied $(ii)$ to $k-1, k-2$ in $(a)$ and $(iii)$ to $k-2$ in $(b)$.
	Note that
	\begin{align*}
	\left|\lambda_1^{k-2}-\lambda_1^{k-1}\right| & = \left|\sigma \sum_{j=1}^{N} \left(\theta_j^{k-2}-\theta_j^{k-3}\right) m_{i,j}\right| \leq \sigma C_1 \sqrt{\frac{\log N}{N}} \Delta_{k-2},
	\end{align*} which yields the inequality:
	\begin{align*}
	\left| \theta_i^k - \theta_i^{ k-1} \right| & \leq  \frac{\sigma}{\left|\lambda_1-\lambda_i\right|} f_1(N) N^{-1/2} \Delta_{k-1} + \frac{\sigma^2 }{\left|\lambda_1-\lambda_i\right|^2} f_2(N) N^{-1} \Delta_{k-2}.
	\end{align*}
 	We choose $\delta $ such that $0 < \delta < \alpha$ (where $\alpha$ is fixed by $(\ref{microscopicregime})$), and we sum from $i=2$ to $N$:
	\begin{align*}
	\Delta_k & \leq \sigma f_1(N) N^{1/2 + \delta} \Delta_{k-1} + \sigma^2 f_3(N) N^{1/3} \Delta_{k-2} \\
	& \overset{(a)}{\leq} o(\epsilon) \epsilon^{k-2} \Delta_1 + o(\epsilon^2) \epsilon^{k-3} \Delta_1 \\
	& \leq  \epsilon^{k-1} \Delta_1.
	\end{align*} We used $\sigma f_1(N) N^{1/2 + \delta} =o(\epsilon)$, $\sigma^2 f_3(N) N^{1/3} = o(\epsilon^2)$ and we applied $(i)$ to $k-1$ and $k-2$ in $(a)$.\\
	
	\textbf{Step 2: propagation of the second equation.}	
	Let $k \geq 2$, and $0 < \delta < \alpha$. We work by induction, assuming that $(i)$, $(ii)$ and $(iii)$ are verified until $k-1$. 
	\begin{align*}
	\left| \lambda_1^k - \lambda_1^{ k-1} \right| & \leq \sigma f_1(N) N^{-1/2} \Delta_{k-1} \\ 
	& \overset{(a)}{\leq} \sigma f_1(N) N^{-1/2} {\epsilon^{k-2} \Delta_1}\\
	& \leq N^{-2/3} (\log N)^{-C \log \log N} \epsilon^{k-2} \Delta_1\\
	& \leq \frac{\lambda_1-\lambda_2}{2} \epsilon^{k-2} \Delta_1\\
	& \leq \frac{\lambda_1-\lambda_i}{2} \epsilon^{k-2} \Delta_1.
	\end{align*} We applied $(i)$ to $k-1$ in $(a)$. Note that
	\begin{align*}
	\Delta_1 & = \sum_{j=2}^{N} \frac{\sigma}{\lambda_1 - \lambda_i} \left| m_{i,1} \right| \leq \sigma f_1(N) N^{1/2+\delta} \leq o(\epsilon).
	\end{align*}
	Applying $(ii)$ to $k-1$, we get
	\begin{align*}
	\left| \lambda_1^k - \lambda_i \right| & \geq \left| \lambda_1 - \lambda_1^{k-1} \right| - \left| \lambda_1^k - \lambda_1^{k-1} \right| \\ 
	& \geq \frac{\lambda_1-\lambda_i}{2} \left(1-\epsilon-\ldots-\epsilon^{k-2}\right) - \frac{\lambda_1-\lambda_i}{2} \epsilon^{k-1}\\
	& \geq \frac{\lambda_1-\lambda_i}{2} \left(1-\epsilon-\ldots-\epsilon^{k-1}\right).
	\end{align*}
	
	\textbf{Step 3: propagation of the third equation.} Let $k \geq 1$. Here again, we work by induction, assuming that $(i)$, $(ii)$ and $(iii)$ are verified until $k-1$. 
	\begin{align*}
	S_k &= 1 + \sum_{j=2}^{N} \left|\theta^k_j\right|\\
	& \leq 1 + \Delta_k + S_{k-1} - 1\\
	& \overset{(a)}{\leq} \epsilon^{k-1} \Delta_1 + 1 + \left(1+\ldots+\epsilon^{k-2}\right)\Delta_1\\
	& \leq 1 + \left(1+\epsilon+\ldots+\epsilon^{k-1}\right) \Delta_1.
	\end{align*} We applied $(i)$ to $k$ and $(iii)$ to $k-1$ in $(a)$.
	
	\textbf{Step 4: Proof of $(i)$ for $k=1,2$, $(ii)$ for $k=0,1$ and $(iii)$ for $k=0,1$.}
	The equation $(i)$ for $k=1$ is obvious. For $k=2$ :
	\begin{align*}
	\left| \theta_i^2 - \theta_i^1 \right| & \leq \left| \frac{\sigma}{\lambda_1^1-\lambda_i} \sum_{j=2}^{N} \left(\theta_j^{1} - \theta_j^{0}\right)m_{i,j} \right|  + \left| \frac{\sigma \left(\lambda_1^{0}-\lambda_1^{1}\right)}{\left(\lambda_1^{1}-\lambda_i\right)\left(\lambda_1^{0}-\lambda_i\right)} \sum_{j=1}^{N} \theta_j^{0} m_{i,j} \right|.
	\end{align*}
	We have
	\begin{align*}
	\left| \lambda_1^1 - \lambda_i \right| & \geq \left| \lambda_1 - \lambda_i \right| - \left| \lambda_1 - \lambda_1^1 \right| 
	 \geq \left| \lambda_1 - \lambda_i \right| - \sigma \left|m_{1,1}\right|\\
	& \geq \left| \lambda_1 - \lambda_i \right| - \frac{1}{2} \left| \lambda_1 - \lambda_2 \right|
	\geq \frac{1}{2} \left| \lambda_1 - \lambda_i \right|,
	\end{align*}
	which shows $(ii)$ for $k=0,1$. Thus, for $0<\delta < \alpha$:
	\begin{align*}
	\left| \theta_i^2 - \theta_i^1 \right| &\leq \frac{2 \sigma}{\lambda_1-\lambda_i} C_1 \sqrt{\frac{\log N}{N}} \Delta_1 + \frac{4 \sigma}{\left(\lambda_{1}-\lambda_i\right)^2 } \sigma \left|m_{1,1}\right| \left|m_{i,1}\right|,
	\end{align*} and
	\begin{align*}
	\Delta_2 & \leq  \sigma f_1(N) N^{1/2 + \delta} \Delta_1 + 4 \sigma \left|m_{1,1}\right| \sum_{i=2}^{N} \frac{\sigma \left|m_{i,1}\right|}{\left(\lambda_1 - \lambda_i\right)^2}\\
	& \leq \sigma f_1(N) N^{1/2 + \delta} \Delta_1 + 4 \sigma f_4(N) N^{-1/2} N^{2/3}  \sum_{i=2}^{N} \frac{\sigma \left|m_{i,1}\right|}{\left(\lambda_1 - \lambda_i\right)}\\
	& \leq \sigma f_1(N) N^{1/2 + \delta} \Delta_1 + 4 \sigma f_4(N) N^{1/6}  \Delta_1\\
	& \leq \epsilon \Delta_1.
	\end{align*}
	The proof of $(iii)$ for $k=0,1$ is obvious.\\
	
\textbf{Step 5: Proof of equation $(iv)$.}	
Let $k \geq 2$ and $2 \leq i \leq N$. In the same way as in Step 1, we have
\begin{flalign*}
\left| \theta_i^k - \theta_i^{ k-1} \right| & \leq \frac{2 \sigma C_1}{\lambda_1-\lambda_i} \sqrt{\frac{\log N}{N}} \epsilon^{k-2} \Delta_1 + \frac{8 \sigma^2 C_1^2}{\left(\lambda_1-\lambda_i\right)^2} \frac{\log N}{N} \epsilon^{(k-3)_+}\Delta_1.
\end{flalign*}
In the right-hand term, the ratio of the second term on the first one is smaller that
\begin{equation*}
\frac{4 \sigma C_1}{\lambda_1 - \lambda_i} \sqrt{\frac{\log N}{N}} \epsilon^{-1} \leq \sigma N^{1/6} f(N) \epsilon^{-1} \leq \epsilon \to 0,
\end{equation*}
using Lemma \ref{lemma_controletrousp}, with $f \in \mathcal{F}$. It follows that for $N$ big enough (uniformly in $k$ and $i$) one has
\begin{equation}
\label{ecartik}
\left| \theta_i^k - \theta_i^{ k-1} \right| \leq \frac{\sigma f(N) }{\lambda_1-\lambda_i} N^{-1/2} \epsilon^{k-2} \Delta_1.
\end{equation}
Equation \eqref{ecartik} shows that the scheme \eqref{schemadepicard} converges, and that the limits are indeed the solutions $\theta_1=1, \theta_2, \ldots, \theta_N$ of the fixed-point equations. By a simple summation of (\ref{ecartik}) over $k \geq 2$, applying Lemma \ref{lemma_sommevp1} and inequality \eqref{ineq_encadrementgaussiennes} we have
\begin{flalign*}
\left| \theta_i - \theta_i^{1} \right| & \leq \frac{2 \sigma f(N) }{\lambda_1-\lambda_i} N^{-1/2} \Delta_1 \leq \frac{2 \sigma^2 f(N) }{\lambda_1-\lambda_i} N^{\delta},
\end{flalign*}
where $\delta>0$ is a positive quantity of Lemma \ref{lemma_sommevp1} specified later. Using Lemma \ref{lemma_sommevp2} one has the following control
\begin{equation*}
\sum_{i=2}^{N} \left| \theta_i - \theta_i^{1} \right|^2 \leq 4 \sigma^4 N^{2 \delta} f(N) N^{4/3}.
\end{equation*}
Moreover, Lemma \ref{lemma_concentrationksi} shows that
\begin{equation*}
\sum_{i=2}^{N} \left| \theta_i^{1} \right|^2 \asymp \sigma^2 N^{1/3} \geq g(N)^{-1} \sigma^2 N^{1/3},
\end{equation*}
where $g$ is another function in $\mathcal{F}$. This yields
\begin{equation*}
\sum_{i=2}^{N} \left| \theta_i - \theta_i^{1} \right|^2 \leq \sum_{i=2}^{N} \left| \theta_i^{1} \right|^2 4 \sigma^2 N^{2 \delta+1} {f(N)}{g(N)}.
\end{equation*}
The proof is completed by taking $\delta = \alpha/2$ and applying (\ref{microscopicregime}).
\end{proof}

\subsection*{Proof of Lemma \ref{lemma_controletrousp}}\label{proof_lemma_controletrousp}
\begin{proof}[Proof of Lemma \ref{lemma_controletrousp}]
	This lemma provides a control of the spectral gap $\lambda_1 - \lambda_2$. Given a good rescaling (in $N^{2/3}$), the asymptotic joint law of the eigenvalues in the edge has been investigated in a great amount of research work, for Gaussian ensembles, and for more general Wigner matrices. The GOE case has been mostly studied by Tracy, Widom, and Forrester among many other ; in \cite{Forrester93} and \cite{Tracy98}, the convergence of the joint distribution of the first $k$ eigenvalues towards a density distribution is established:
	\begin{proposition}[\cite{Forrester93}, \cite{Tracy98}]
		For a given $k\geq 1$, and all $s_1, \ldots, s_k$ real numbers,
		\begin{equation}
		\label{loilimitetrousp}
		\mathbb{P}\left(N^{2/3} \left(\lambda_1 - 2\right) \leq s_1, \ldots, N^{2/3} \left(\lambda_k - 2\right) \leq s_k \right) \underset{N \to \infty}{\longrightarrow} \mathcal{F}_{1,k}(s_1, \ldots, s_k),
		\end{equation}
	\end{proposition}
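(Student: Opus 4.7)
The plan is to establish this as a consequence of the Pfaffian point process structure of the GOE spectrum together with the Airy$_1$ edge universality. Starting from the explicit joint eigenvalue density
\[
p_N(\lambda_1,\dots,\lambda_N) \;=\; \frac{1}{Z_N}\prod_{i<j}|\lambda_i - \lambda_j|\prod_i e^{-N\lambda_i^2/4},
\]
the de Bruijn--Mehta formalism gives a Pfaffian representation of the $m$-point correlation functions,
\[
\rho_m^{(N)}(x_1,\dots,x_m) \;=\; \mathrm{Pf}\bigl[K_N(x_i,x_j)\bigr]_{i,j=1}^m,
\]
where $K_N$ is a $2\times 2$ matrix kernel built from the skew-orthogonal polynomials associated with the Gaussian weight, which for GOE can be expressed in terms of Hermite polynomials of degree close to $N$.

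The first step is to derive the edge asymptotics of $K_N$ under the scaling $x = 2 + N^{-2/3}s$. Using Plancherel--Rotach asymptotics for Hermite polynomials at the soft edge, each entry of $K_N(2+N^{-2/3}s,\, 2+N^{-2/3}t)$, appropriately rescaled by Jacobian factors, converges locally uniformly on $\mathbb{R}^2$ to the corresponding entry of the Airy$_1$ matrix kernel $K_{\mathrm{Ai},1}(s,t)$, whose entries are bilinear expressions in $\mathrm{Ai}$, $\mathrm{Ai}'$ and their half-line integrals. The second step is to rewrite the joint CDF of the top $k$ eigenvalues as a Fredholm Pfaffian: by inclusion--exclusion in the indicators $\mathbf{1}_{\lambda_i > s_j}$, the probability
\[
\mathbb{P}\bigl(\lambda_1 \leq s_1,\ldots,\lambda_k \leq s_k\bigr)
\]
reduces to a finite linear combination of gap probabilities $\mathbb{P}(\text{no eigenvalue in } I)$ over half-lines $I = (s,+\infty)$, each of which equals the Fredholm Pfaffian $\mathrm{Pf}(J - K_N)_{L^2(I)}$. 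The third step is to pass to the limit: combining the uniform-on-compact convergence of $K_N$ with explicit Gaussian-type tail bounds on its entries at the edge yields trace-class convergence of $K_N\mathbf{1}_{[s,\infty)}$ to $K_{\mathrm{Ai},1}\mathbf{1}_{[s,\infty)}$, hence convergence of each Fredholm Pfaffian. Reassembling the inclusion--exclusion identifies the limit with $\mathcal{F}_{1,k}(s_1,\dots,s_k)$, defined intrinsically as the joint CDF of the top $k$ points of the Airy$_1$ process.

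The main obstacle is the technical analysis of the $2\times 2$ Pfaffian kernel: compared to the GUE case, where a scalar determinantal kernel leads directly to Fredholm determinants, the GOE skew-orthogonal polynomial construction is genuinely more delicate, especially near the spectral edge where one has to control several integral transforms of Hermite polynomials simultaneously and produce tail bounds uniform in $N$. Working out these estimates in full is exactly the content of \cite{Forrester93}, where the joint edge distributions of the top $k$ GOE eigenvalues are first established, and of \cite{Tracy98}, which gives explicit operator-theoretic formulas for $\mathcal{F}_{1,k}$. Rather than redoing this Pfaffian analysis, I would invoke those results as a black box for the present proposition.
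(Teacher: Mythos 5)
The paper does not prove this proposition at all: it is stated as an imported result from \cite{Forrester93} and \cite{Tracy98}, which is exactly what you do when you ultimately invoke those works as a black box, so your approach is essentially the same as the paper's. (Your intermediate sketch follows the standard route, though note that the joint CDF of the top $k$ eigenvalues is not a plain finite linear combination of vacuum half-line gap probabilities --- one needs the probabilities of having at most $j-1$ points above $s_j$ for each $j$, i.e.\ multi-parameter deformations of the Fredholm Pfaffian --- but since you defer to the cited references anyway, this looseness does not affect your conclusion.)
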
 where the $\mathcal{F}_{1,k}$ are continuous and can be expressed as solutions of non linear PDEs. Thus the re-scaled spectral gap $N^{2/3}\left(\lambda_{1}-\lambda_2\right)$ has a limit probability density law supported by $\mathbb{R_+}$, which implies that
	\begin{equation*}
	\mathbb{P}\left(N^{2/3} \left(\lambda_1 - \lambda_2\right) \geq \left(\log N\right)^{-C \log \log N} \right) \underset{N \to \infty}{\longrightarrow} 1.
	\end{equation*}
	Of course, the choice of the function $N \mapsto \left(\log N\right)^{-C \log \log N}$ is here arbitrary and the result is also true for any function tending to 0.
\end{proof}

\subsection*{Proof of Lemma \ref{lemma_sommevp2}}\label{proof_lemma_sommevp2}
\begin{proof}[Proof of Lemma \ref{lemma_sommevp2}]
	This result needs an understanding of the behavior of the spectral gaps of matrix $A$, in the bulk and in the edges (left and right). The eigenvalues in the \textit{edge} correspond to indices $i$ such that $i=o(N)$ (left) or $i = N-o(N)$ (right). Eigenvalues in the \textit{bulk} are the remaining eigenvalues. For this, we use a result of rigidity of eigenvalues, due to L. Erdös et al. \cite{Erdos10}, which consists in a control of the probability of the gap between the eigenvales of  $A$ and the typical eigenvalues $\gamma_j$ of the semi-circle law, defined as follows
	\begin{equation}
	\forall i \in \left\lbrace 1, \dots, n\right\rbrace , \; \frac{1}{2 \pi} \int_{-2}^{\gamma_j} \sqrt{4-x^2} dx =1- \frac{j}{N}.
	\end{equation}
	
	\begin{proposition}[\cite{Erdos10}]
		For some positive constants $C_5>0$ and $C_6>0$, for $N$ large enough,
		\begin{multline}
		\label{erdos}
		\mathbb{P}\left( \exists j \in \left\lbrace 1, \dots, n\right\rbrace \, | \, \left|\lambda_j - \gamma_j \right| \geq \left(\log N\right)^{C_5 \log \log N} \left(\min \left(j, N+1-j\right)\right)^{-1/3} N^{-2/3} \right) \\ \leq C_5 \exp \left(- \left(\log N\right)^{C_6 \log \log N} \right).
		\end{multline}
	\end{proposition}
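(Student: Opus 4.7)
The statement is the Erdős–Schlein–Yau rigidity estimate for eigenvalues of GOE matrices, and the canonical route is to obtain it as a corollary of a \emph{local semicircle law} for the resolvent, then convert spectral density control into individual eigenvalue control. Let $G(z) := (A - z I)^{-1}$ for $z = E + i \eta$ with $\eta > 0$, and let $m_N(z) := \frac{1}{N} \Tr G(z)$ be the Stieltjes transform of the empirical spectral measure of $A$. Let $m_{sc}(z)$ denote the Stieltjes transform of the semicircle law on $[-2,2]$, the unique solution with positive imaginary part to the self-consistent equation $m_{sc}(z)^2 + z \, m_{sc}(z) + 1 = 0$. The goal of the first stage is to show that, with probability at least $1 - C_5 \exp(-(\log N)^{C_6 \log \log N})$,
$$|m_N(z) - m_{sc}(z)| \leq (\log N)^{C'} \left( \frac{1}{N \eta} + \sqrt{\frac{\mathrm{Im} \, m_{sc}(z)}{N \eta}} \right)$$
uniformly for $z$ in a suitable spectral domain down to scales $\eta \gg N^{-1}$.

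The plan to prove this local law is the standard Schur complement / fluctuation averaging strategy. For each $i$, write $G_{ii}(z) = (A_{ii} - z - \mathbf{a}_i^T G^{(i)}(z) \mathbf{a}_i)^{-1}$, where $\mathbf{a}_i$ is the $i$-th column of $A$ with the $i$-th entry removed and $G^{(i)}$ the resolvent of the minor. Using Gaussian concentration for quadratic forms $\mathbf{a}_i^T G^{(i)} \mathbf{a}_i - \frac{1}{N} \Tr G^{(i)}$ together with interlacing $\Tr G^{(i)} = \Tr G + O(1/\eta)$, one derives a perturbed self-consistent equation $m_N^2 + z \, m_N + 1 = \delta_N$, with $\delta_N$ controllable by large-deviation inputs. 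A bootstrap (continuity) argument then propagates control from the trivial scale $\eta = O(1)$ down to $\eta \approx N^{-1}$, using Lipschitz/Hölder stability of the map from $\delta$ to $m - m_{sc}$.

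The second stage converts the local law into rigidity via a Helffer–Sjöstrand calculus: for any smooth cutoff $\chi$, one writes $\sum_j \chi(\lambda_j) = -\frac{1}{\pi} \iint \partial_{\bar z} \tilde{\chi}(z) \cdot N \cdot \mathrm{Im} \, m_N(z) \, dE \, d\eta$ for a quasi-analytic extension $\tilde{\chi}$. Choosing $\chi$ to approximate the indicator $\mathbf{1}_{(-\infty, E]}$ and bounding the integral using the local law gives $|\mathcal{N}(E) - N F_{sc}(E)| \leq (\log N)^{C''}$, where $\mathcal{N}$ is the eigenvalue counting function and $F_{sc}$ its semicircle analog. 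Inverting this relation at $F_{sc}^{-1}(j/N) = \gamma_j$ yields $|\lambda_j - \gamma_j| \leq (\log N)^{C'''} / (N \rho_{sc}(\gamma_j))$, and the square-root vanishing of $\rho_{sc}$ at $\pm 2$ precisely produces the factor $(\min(j, N+1-j))^{-1/3} N^{-2/3}$ appearing in \eqref{erdos}.

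The principal obstacle is the stability of the self-consistent equation near the spectral edges, where $m_{sc}$ has a square-root singularity and the linearization degenerates. Obtaining the sharp $j^{-1/3}$ refinement (rather than the weaker bulk bound $N^{-1}$) requires a cubic rather than quadratic stability analysis, and a careful version of the fluctuation averaging lemma that exploits cancellations between the off-diagonal resolvent contributions. Since the paper invokes \eqref{erdos} only as a black box from \cite{Erdos10}, one may import the full technical machinery of that reference without adaptation; the subprobabilistic error term $C_5 \exp(-(\log N)^{C_6 \log \log N})$ matches the large-deviation input natively produced by the Hanson–Wright-type bounds used there.
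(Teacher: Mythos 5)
The paper does not prove this proposition at all: it is imported verbatim as a black-box citation of \cite{Erdos10}, and the only original content surrounding it is the remark on normalization (the variances of the columns in model \eqref{GOEmodel} sum to $1+\tfrac1N$ rather than $1$, which the authors fix by rescaling $\tilde A = A(1+\tfrac1N)^{-1/2}$ and absorbing the $O(N^{-1})$ shift into the constant $C_5$). Your outline is a faithful description of how the cited result is actually proved — local semicircle law via Schur complement and a perturbed self-consistent equation, fluctuation averaging, a bootstrap in $\eta$ down to scale $N^{-1}$, then Helffer--Sj\"ostrand to turn resolvent control into counting-function control $|\mathcal N(E)-NF_{sc}(E)|\le(\log N)^{C}$, and inversion at $\gamma_j$ with the square-root vanishing of $\rho_{sc}$ producing the $(\min(j,N+1-j))^{-1/3}N^{-2/3}$ scaling and the sub-exponential probability bound. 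So you are taking "the same approach" in the only meaningful sense: like the paper, you ultimately lean entirely on the machinery of \cite{Erdos10}. Be aware, though, that as written your text is a proof plan rather than a proof — the genuinely hard steps (the fluctuation averaging lemma, the degenerate edge stability of the self-consistent equation, the continuity/bootstrap argument) are named but not carried out, and you explicitly defer them to the reference. That is acceptable here precisely because the paper itself treats \eqref{erdos} as an external input; if you intended a self-contained argument, each of those steps would require substantial work. One small addition worth making if you keep your version: address the normalization mismatch with \cite{Erdos10} (or state the proposition for the exactly-normalized ensemble and rescale), as the paper does in its remark.
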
	
	\begin{remark}
		Another similar result that goes in the same direction for the GOE is already known: it has been shown by O'Rourke in \cite{ORourke10} that the variables $\lambda_i - \gamma_i$ behave as Gaussian variables when $N \to \infty$. However, the rigidity result in \eqref{erdos} obtained in \cite{Erdos10} can apply in more general models. This quantitative probabilistic statement was not previously known even for the GOE case.
	\end{remark} 
	\begin{remark}
		Let us note that one of the assumptions made in \cite{Erdos10} is that variances of each column sum to 1, which is not directly the case in our model (\ref{GOEmodel}). Nevertheless, one may use (\ref{erdos}) for the re-scaled matrix $\tilde{A} := A \left(1+\frac{1}{N}\right)^{-1/2}$, then easily check that there is a possible step back to $A$: $|\lambda_j - \gamma_j|\leq \left|\lambda_j \left(1+\frac{1}{N}\right)^{-1/2}- \gamma_j \right|+ N^{-1} + o(N^{-1})$, and $ N^{-1} + o(N^{-1}) \leq 2\left(\min \left(j, N+1-j\right)\right)^{-1/3} N^{-2/3}$ for $N$ big enough. Tolerating a slight increase of the constant $C_5$, the result (\ref{erdos}) is thus valid in the GOE.
	\end{remark}
	Let us now compute an asymptotic expansion of $\gamma_j$ in the right edge, which is for  $j = o(N)$. Define
	\begin{equation}
	\label{G}
	G(x) := \frac{1}{2 \pi} \int_{-2}^{x} \sqrt{4-t^2} dt = \frac{x\sqrt{4-x^2}+4 \arcsin(x/2)}{4 \pi}+\frac{1}{2} , 
	\end{equation}
	for all $x \in [-2,2]$. We have $\gamma_j = G^{-1}(1-j/N) = - G^{-1}(j/N)$, observing that the integrand in (\ref{G}) is an even function. We get the following expansion when $x \to -2$,
	\begin{equation*}
	G(x) \underset{x \to -2}{=} \frac{2(x+2)^{3/2}}{3 \pi} + o\left((x+2)^{3/2}\right)
	\end{equation*} which implies that
	\begin{equation*}
	G^{-1}(y) \underset{y \to 0}{=} -2 + \left(\frac{3 \pi y}{2}\right)^{2/3} + o\left(y^{2/3}\right), 
	\end{equation*} hence
	\begin{equation}
	\label{gammaedge}
	\gamma_j \underset{j/N \to 0}{=} 2 - \left(\frac{3 \pi j}{2 N}\right)^{2/3} + o\left((j/N)^{2/3}\right).
	\end{equation} 
	\begin{remark}
		One can observe the coherence of this result that arises naturally in \cite{ORourke10} as the expectation of the eigenvalues in the edge.
	\end{remark}
	
	Let $\epsilon>0$, to be specified later. To establish our result we will split the variables $j$ in three sets:
	\begin{align*}
	A_1 &:= \left\lbrace 2 \leq j \leq \left(\log N\right)^{(C_5+1)\log \log N} \right\rbrace \; \mbox{(a small part of the right edge)},\\ A_2 &:= \left\lbrace \left(\log N\right)^{(C_5+1)\log \log N} < j \leq N^{1-\epsilon} \right\rbrace \;  \mbox{(a larger part of the right edge)}, \\ A_3 &:= \left\lbrace N^{1-\epsilon} < j \leq N \right\rbrace  \; \mbox{(everything else)}.
	\end{align*} We show that the sum over $A_1$ is the major contribution in (\ref{sommevp2eq}). The split in the right edge in $A_1$ and $A_2$ is driven by the error term of (\ref{erdos}): this term is small compared to $\gamma_j$ if and only if $\left(\log N\right)^{C_5 \log \log N} = o(j)$.\\
	
	\textbf{Step 1: estimation of the sum over $A_1$.}
	According to (\ref{erdos}) and Lemma \ref{lemma_controletrousp}, w.h.p.
	\begin{equation*}
	N^{-4/3} \left(\log N\right)^{-C_6 \log \log N}  \leq \left(\lambda_1 - \lambda_2 \right)^2 \leq C_7 N^{-4/3} \left( \log N \right)^{C_6 \log \log N},
	\end{equation*} where $C_6, C_7$ are positive constants. Hence, w.h.p.
	\begin{flalign*}
	\label{a1}
	\frac{N^{4/3}}{C_7 \left( \log N \right)^{C_6 \log \log N}} &\leq \sum_{j \in A_1} \frac{1}{\left(\lambda_1 - \lambda_j\right)^2} \\ & \leq \sum_{j \in A_1} \frac{1}{\left(\lambda_1- \lambda_2\right)^2}\\ & \leq N^{4/3} \left( \log N \right)^{(C_5+C_6+1) \log \log N}. 
	\end{flalign*}
	
	\textbf{Step 2: estimation of the sum over $A_2$.} Let us show that the sum over $A_2$ is asymptotically small compared to the sum over $A_1$: using (\ref{erdos}) and (\ref{gammaedge}), we know that there exists $C_8>0$ such that for all $j \in A_2$, w.h.p.
	\begin{equation*}
	\lambda_j = 2 - C_8 \left(\frac{j}{N}\right)^{2/3} + o\left((j/N)^{2/3}\right),
	\end{equation*} and we know furthermore (se e.g. \cite{Anderson09}) that w.h.p.
	\begin{equation}
	\label{lambda1}
	\lambda_1 = 2 + o\left((j/N)^{2/3}\right), \forall j \in A_2
	\end{equation} hence w.h.p.
	\begin{flalign*}
	\label{a2}
	\sum_{j \in A_2} \frac{1}{\left(\lambda_1 - \lambda_j\right)^2} &= N^{4/3} \sum_{j \in A_2} \frac{1}{C_9 j^{4/3}(1+o(1))}\\& =  N^{4/3} (1+o(1)) \sum_{j \in A_2} \frac{1}{C_9 j^{4/3}} = o\left(N^{4/3}\right),
	\end{flalign*}
	using in the last line the fact that the Riemann's series $\sum j^{-4/3}$ converges.\\
	
	\textbf{Step 3: estimation of the sum under $A_3$.} With the previous results (\ref{erdos}), (\ref{gammaedge}) and (\ref{lambda1}), assuming that $\epsilon<1$, we get w.h.p.
	\begin{equation*}
	\lambda_1 - \lambda_{N^{1-\epsilon}} = C_8 N^{-2\epsilon/3} + O\left(N^{-2\epsilon/3}\right),
	\end{equation*} which gives w.h.p. the following control
	\begin{flalign*}
	\label{a3}
	\sum_{j \in A_3} \frac{1}{\left(\lambda_1- \lambda_j\right)^2} &\leq \left(N-N^{1-\epsilon}\right) \frac{1}{\left(\lambda_1 - \lambda_{N^{1-\epsilon}}\right)^2}\\& = \left(N-N^{1-\epsilon}\right) \frac{N^{4\epsilon/3}}{C_9 (1+o(1))} = O\left(N^{1+4\epsilon/3}\right) = o\left(N^{4/3}\right),&&
	\end{flalign*}
	as long as $\epsilon < 1/4$. Taking such a $\epsilon$, these three controls end the proof.
\end{proof}

\subsection*{Proof of Lemma \ref{lemma_sommevp1}} \label{proof_lemma_sommevp1}
\begin{proof}[Proof of Lemma \ref{lemma_sommevp1}]
	We follow the same steps as in the proof of Lemma \ref{lemma_sommevp2}. Let's take $\delta>0$. We split the $j$ variables in three sets:
	\begin{align*}
	A_1 &:= \left\lbrace 2 \leq j \leq N^{1/3} \right\rbrace,\\ A_2 &:=  \left\lbrace N^{1/3} < j \leq N^{1-\delta} \right\rbrace,\\ A_3 &:= \left\lbrace N^{1-\delta} < j \leq N \right\rbrace.
	\end{align*}
	We use Lemma \ref{lemma_controletrousp} to obtain the following control w.h.p.
	\begin{flalign*}
	\sum_{j \in A_1} \frac{1}{\lambda_1 - \lambda_j} \leq N^{1/3} N^{2/3} \left(\log N\right)^{C_5 \log \log N} = O(N^{1+\delta}).
	\end{flalign*}
	Similarly, for $A_2$
	\begin{flalign*}
	\sum_{j \in A_2} \frac{1}{\lambda_1 - \lambda_j} &\leq \sum_{j \in A_2} \frac{1}{o(N^{-2/3}) + C_8(j/N)^{2/3} + O\left(\left(\log N\right)^{C_5 \log \log N}N^{-2/3}j^{-1/3}\right)} \\ &= N^{2/3} \sum_{j \in A_2} \frac{1}{o(1)+C_8j^{2/3}} \leq C_{10} N^{2/3} N^{(1-\delta)/3} \leq O(N^{1+\delta}).
	\end{flalign*}
	Finally, using Cauchy–Schwarz inequality
	\begin{flalign*}
	\sum_{j \in A_3} \frac{1}{\lambda_1 - \lambda_j} \leq  \sqrt{N} \left(\sum_{j \in A_3} \frac{1}{\left(\lambda_1 - \lambda_j\right)^2}\right)^{1/2} \leq  \sqrt{N} O(N^{1/2+2\delta/3}) = O(N^{1+\delta}).
	\end{flalign*}
\end{proof}

\subsection{Proof of Lemma \ref{lemma_concentrationksi}} \label{proof_lemma_concentrationksi}
\begin{proof}[Proof of Lemma \ref{lemma_concentrationksi}]
We show that w.h.p.
\begin{equation}
\label{concentration}
\sum_{i=2}^{N} \frac{\langle Hv_i,v_1 \rangle ^2 }{\left(\lambda_1 - \lambda_i\right)^2} - \frac{1}{N} \sum_{i=2}^{N} \frac{1}{\left(\lambda_1 - \lambda_i\right)^2}
= o\left(\frac{1}{N} \sum_{i=2}^{N} \frac{1}{\left(\lambda_1 - \lambda_i\right)^2}\right)
\end{equation}
Let us recall that $H$ is drawn according to the GOE, hence its law is invariant by rotation. This implies that the $\langle Hv_i,v_1 \rangle$ are independent variables with variance $1/N$, independent of $\lambda_1, \ldots, \lambda_N$. Define
\begin{equation*}
M_N := \sum_{i=2}^{N} \frac{\langle Hv_i,v_1 \rangle ^2 - 1/N }{\left(\lambda_1 - \lambda_i\right)^2}.
\end{equation*}
Computing the second moment of $M_N$, we get
\begin{flalign*}
\mathbb{E}\left[M_N ^2 | \lambda_1, \ldots, \lambda_N \right]& = \mathrm{Var}(M_N | \lambda_1, \ldots, \lambda_N) = \frac{1}{N^4} \sum_{i=2}^{N} \frac{2 }{\left(\lambda_1 - \lambda_i\right)^4}. 
\end{flalign*}
Adapting the proof of Lemma \ref{lemma_sommevp2}, following the same steps, one can also show that w.h.p.
\begin{equation}
\label{controlevp4}
\sum_{i=2}^{N} \frac{1}{\left(\lambda_1 - \lambda_i\right)^4} \asymp N^{8/3}.
\end{equation}
Let $\epsilon = \epsilon(N)>0$ to be specified later. By Markov's inequality
\begin{flalign*}
\mathbb{P}\left(\left|M_N\right| \geq \frac{\epsilon}{N} \sum_{i=2}^{N} \frac{1}{\left(\lambda_1 - \lambda_i\right)^2} | \lambda_1, \ldots, \lambda_N \right)& \leq \frac{N^2}{\epsilon^2} \frac{\mathbb{E}\left[M_N ^2 | \lambda_1, \ldots, \lambda_N \right]}{\left(\sum_{i=2}^{N} \frac{1}{\left(\lambda_1 - \lambda_i\right)^2}\right)^2} \\
&  \asymp \frac{1}{\epsilon^2 N^2},
\end{flalign*}
by Lemma \ref{lemma_sommevp2} and equation (\ref{controlevp4}). Taking e.g. $\epsilon(N)=N^{-1/2}$ concludes the proof.
\end{proof}

\section{Additionnal proofs for Sections \ref{toymodel} \& \ref{EIG1threshold}}
\label{section45_add_proofs}
\subsection{Proof of Lemma \ref{fibo}}\label{proof_lemma_fibo}
\begin{proof}[Proof of Lemma \ref{fibo}]
We fix $\alpha>0$ and we want to prove
\begin{equation}
\label{sum_fibop}
\sum_{k=0}^{\lfloor(N-1)/2\rfloor}  \binom{N-1-k}{k} \alpha^k = \frac{1}{\sqrt{1+4 \alpha}} \left[\left(\frac{1+\sqrt{1+4 \alpha}}{2}\right)^N - \left(\frac{1-\sqrt{1+4 \alpha}}{2}\right)^N\right].
\end{equation}We denote in the following $\phi_+ := \frac{1+\sqrt{1+4 \alpha}}{2}$ and $\phi_- := \frac{1-\sqrt{1+4 \alpha}}{2}$, and for all $N \geq 1$:
\begin{equation*}
u_N = u_N(\alpha) := \sum_{k=0}^{\lfloor(N-1)/2\rfloor}  \binom{N-1-k}{k} \alpha^k.
\end{equation*}
We clearly have $u_N(\alpha) \leq \left(1+\alpha\right)^N$. For all $t>0$ small enough (e.g. $t < \frac{1}{1+\alpha}$), define
\begin{equation*}
f(t) := \sum_{N =1}^{\infty}  u_N t^N.
\end{equation*}
On one hand,
\begin{flalign*}
\frac{t}{1-t-\alpha t^2} & = t \sum_{m=0}^{\infty} (t+\alpha t^2)^m = \sum_{m=0}^{\infty} \sum_{l=0}^{m} \binom{m}{l} \alpha ^l t^{l + m + 1}\\
& = \sum_{N=1}^{\infty} \left(\sum_{\substack{0 \leq l \leq m \\ l+m = N-1}} \binom{m}{l} \alpha^l \right) t^N = \sum_{N=1}^{\infty} u_N t^N = f(t). 
\end{flalign*}
On the other hand,
\begin{flalign*}
\frac{t}{1-t-\alpha t^2} & = \frac{t}{\left(1-\phi_{-}t\right)\left(1-\phi_{+}t\right)} =  \frac{1}{\phi_{+}-\phi_{-}}\left(\frac{1}{1-\phi_{+}t}-\frac{1}{1-\phi_{-}t}\right)\\
& =  \frac{1}{\sqrt{1+4\alpha}} \sum_{N=1}^{\infty}\left(\phi_{+}^N - \phi_{-}^N\right) t^N. 
\end{flalign*}
This proves $(\ref{sum_fibop})$.
\end{proof}

\subsection{Proof of Lemma \ref{stillthelink}}
\begin{proof}[Proof of Lemma \ref{stillthelink}]\label{proof_stillthelink}
Let us represent the situation in the plane spanned by $v_1$ and $v'_1$, as shown on figure \ref{imagev}.
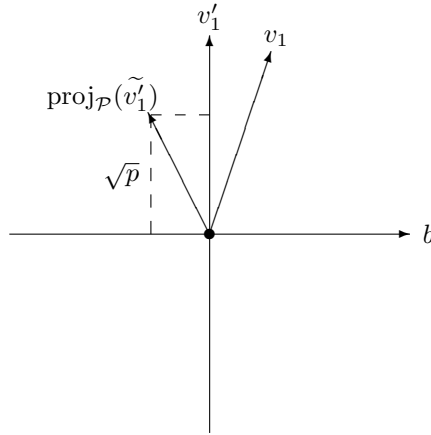
\begin{figure}[H]
	\centering
	\begin{picture}(150,150)

\put(75,75){\circle*{4}}
\put(0,75){\vector(1,0){150}}
\put(157,75){\makebox(0,0){$b$}}
\put(75,0){\vector(0,1){150}}
\put(75,157){\makebox(0,0){$v'_1$}}

\put(75,75){\vector(1,3){23}}
\put(100,149){\makebox(0,0){$v_1$}}

\put(75,75){\vector(-1,2){23}}
\put(35,128){\makebox(0,0){$\mathrm{proj}_{\mathcal{P}}(\widetilde{v'_1})$}}

\multiput(53,75)(0,10){5}{\line(0,1){5}}
\multiput(75,120)(-9,0){3}{\line(-1,0){4}}

\put(42,97){\makebox(0,0){$\sqrt{p}$}}

\end{picture}
	\caption{\label{imagev} Orthogonal projection of $\widetilde{v}_1$ on $\mathcal{P} := \mathrm{span}(v'_1, v_1)$.}
\end{figure}
Since $\widetilde{v}_1$ is taken such that $\langle v_1,\widetilde{v}_1 \rangle >0$ and $\sigma_1$ satisfies (\ref{microscopicregime}), we have  $\langle \widetilde{v}_1,v'_1 \rangle >0$ for $N$ large enough by Proposition \ref{prop_gaussian_decomp}. Let $p := \langle \widetilde{v}_1,v'_1 \rangle^2$ and $\widetilde{w} := \widetilde{v}_1 - \sqrt{p} v'_1 \in \left(v'_1\right)^{\perp}$. By invariance by rotation we can obtain that $\frac{\widetilde{w}}{\| \widetilde{w}\|}=\frac{\widetilde{w}}{\sqrt{1-p}}$ is uniformly distributed on the unit sphere $\mathbb{S}^{N-2}$ of $\left(v'_1\right)^{\perp}$, and independent of $p, v_1$ and $v'_1$. Hence 
\begin{equation*}
\langle b,\widetilde{v}_1 \rangle = \langle b, \widetilde{w} \rangle \overset{(d)}{=} \sqrt{1-p} \cdot \frac{\widetilde{Z}_1}{\sqrt{\sum_{i=1}^{N-1} \left(\widetilde{Z}_i\right)^2}},
\end{equation*} where the $\widetilde{Z}_i$ are independent Gaussian standard variables, independent from everything else.
According to Section \ref{linkGOEtoy} we know that $1 - \langle v_1,v'_1 \rangle \asymp \sigma_1^2 N^{1/3} $ and thus $\langle v_1,b \rangle \asymp \sigma_1 N^{1/6}$. This yields, for $N$ large enough, w.h.p,
\begin{flalign*}
0 < \langle \widetilde{v}_1, v_1 \rangle & \leq \sqrt{p} \langle v_1, v'_1 \rangle + \sqrt{\frac{1-p}{N}} \widetilde{Z}_1 \sigma_1 N^{1/6} f(N)\\
& \leq \sqrt{p} \langle v_1, v'_1 \rangle + \sqrt{1-p} N^{-4/3} g(N) \\
& \leq \max \left(\sqrt{p},\sqrt{1-p}\right) \langle v_1, v'_1 \rangle \\
& \leq \langle v_1, v'_1 \rangle,
\end{flalign*} where $f$ and $g$ are two functions as defined in Lemma \ref{lemma_sommevp2}. From this point one can still make the link with the toy model, as done in the beginning of section \ref{toymodel}. By invariance by rotation, letting $t := \widetilde{v}_1 - \langle \widetilde{v}_1,v_1 \rangle v_1$, we know that $\|t\|$ and $\frac{t}{\| t\|}$ are independent, and that $\frac{t}{\| t\|}$ is uniformly distributed on the unit sphere in $v_1^{\perp}$. We have the following equality in distribution:
\begin{equation*}
\left(r_1(v_1),r_1(\widetilde{v}_1)\right) \overset{(d)}{=} \left(r_1(X),r_1(X+\mathbf{s}Z) \right),
\end{equation*} 
with w.h.p. $$\mathbf{s} \geq \mathbf{s^1} = \frac{\|w\| \|X\|}{\left(\sum_{i=2}^{N} Z_i^2\right)^{1/2} \left(1 - \frac{\|w\| Z_1}{\left(\sum_{i=2}^{N} Z_i^2\right)^{1/2}}\right)} \asymp \sigma_1 N^{1/6},$$ where the $X_i$, $Z_i$ and $w$ are defined in section \ref{toymodel}, for $\sigma=\sigma_1$.
\end{proof}

\subsection{Proof of Lemma \ref{Pi+casei}} \label{proof_lemma_Pi+(i)}
\begin{proof}[Proof of Lemma \ref{Pi+casei}]
Recall that we work in the case $(i)$ ($\sigma = o(N^{-7/6-\epsilon})$ for some $\epsilon>0$), with $\langle v_1,v'_1 \rangle >0$ and $\Pi = \mathrm{Id}$. We want to show that w.h.p.
\begin{equation}
\label{Pi+caseieq}
	\langle A, \Pi_{+} B \Pi_{+}^T \rangle > \langle A, \Pi_{-} B \Pi_{-}^T \rangle.
\end{equation} 
Define
\begin{equation*}
\mathcal{G} := \left\lbrace i, \Pi_{+}(i)=\Pi(i)=i \right\rbrace.
\end{equation*} and
\begin{equation*}
\mathcal{A} := \left\lbrace \sigma N^{1/6}f(N)^{-1} \leq \mathbf{s} \leq \sigma N^{1/6 }f(N) \right\rbrace,
\end{equation*} with $f \in \mathcal{F}$ such that $\mathbb{P} \left(\mathcal{A}\right) \to 1$. For $N$ large enough, on the event $\mathcal{A}$, we have $ 0 \leq \mathbf{s}N \leq  N^{-\epsilon }f(N)$. Hence, retaking the proof of Proposition \ref{prop_zero_one_toy}, we have 
\begin{flalign*}
\phi_{x,z}\left(N, \mathbf{s} \right) & \geq \mathbb{P} \left(\mathcal{N}^{+}(x,x+\mathbf{s}z)=\mathcal{N}^{-}(x,x+\mathbf{s}z)=0\right) \\
& \sim \exp\left(- \mathbf{s}NE(x)\left[z(2F(z)-1)+2E(z)\right]\right) = 1 - O(N^{-\epsilon}f(N)).&&
\end{flalign*}

Thus, with dominated convergence, for $N$ large enough,
\begin{flalign}
\label{probassurA}
\mathbb{P}\left({\Pi_{+}}(i)=\Pi(i) | \mathcal{A}  \right)  = \iint dx dz E(x) E(z) \mathbb{E}\left[\phi_{x,z}\left(N, \mathbf{s} \right)| \mathcal{A}\right] \geq 1 - O(N^{-\epsilon}f(N)).
\end{flalign} We use Markov's inequality with (\ref{probassurA}) to show that $\mathbb{P}\left(\sharp \mathcal{G} \leq N- N^{1-\epsilon/2} \; | \; \mathcal{A} \right) \leq O\left(N^{-\epsilon/2}f(N)\right) $, hence w.h.p.
\begin{equation}
\label{controlgoods}
\sharp \mathcal{G} \geq N- N^{1-\epsilon/2}.
\end{equation}

Splitting the sum
\begin{flalign*}
\langle A, \Pi_{+} B \Pi_{+}^T \rangle & = \sum_{i,j} A_{i,j} B_{\Pi_{+}(i),\Pi_{+}(j)} = \sum_{(i,j) \in \mathcal{G}^2 } A_{i,j} B_{i,j} + \sum_{(i,j) \notin \mathcal{G}^2 } A_{i,j} B_{\Pi_{+}(i),\Pi_{+}(j)},
\end{flalign*} one has, w.h.p.,
\begin{multline*}
\langle A, \Pi_{+} B \Pi_{+}^T \rangle = \sum_{(i,j) \in \mathcal{G}^2 } A_{i,j}^2 + \sum_{\substack{(i,j) \notin \mathcal{G}^2\\ (\Pi_{+}(i),\Pi_{+}(j)) \neq (j,i) } } A_{i,j} A_{\Pi_{+}(i),\Pi_{+}(j)} \\+ \sum_{\substack{(i,j) \notin \mathcal{G}^2\\ (\Pi_{+}(i),\Pi_{+}(j)) = (j,i) } } A_{i,j}^2 + \sigma \sum_{1 \leq i,j \leq N } A_{i,j} H_{\Pi_{+}(i),\Pi_{+}(j)} \\ 
\geq C_1 \frac{(\sharp \mathcal{G})^2}{N} - C_2 \left(N^2-(\sharp \mathcal{G})^2\right)\frac{\log N}{N} -  C_2 \sigma N^2 \frac{\log N}{N}.
\end{multline*} We applied the law of large numbers for the first sum, lower-bounded the third sum by zero, and the classical inequality $\max_{i,j}\left\lbrace A_{i,j}, H_{i,j} \right\rbrace \leq C_2 \frac{\log N}{N}$ (which holds w.h.p.) for the two others. \\
Inequality (\ref{controlgoods}) and condition $(i)$ lead to, w.h.p.
\begin{equation*}
\langle A, \Pi_{+} B \Pi_{+}^T \rangle \geq C_1 N - 2 C_1 N^{1- \epsilon/2} - 2 C_2 N^{1- \epsilon/2} \log N - C_2 N^{-1/6-\epsilon}\log N \geq C_3 N.
\end{equation*}
On the other hand, since by definition $\Pi_{-}(i)=\Pi_{+}(N+1-i)$, w.h.p.,
\begin{multline*}
\langle A, \Pi_{-} B \Pi_{-}^T \rangle = \sum_{(i,j) \in \mathcal{G}^2 } A_{i,j}B_{N+1-i,N+1-j} + \sum_{\substack{(i,j) \notin \mathcal{G}^2 } } A_{i,j} B_{\Pi_{-}(i),\Pi_{-}(j)} \\
\leq O(\log N) + \frac{(\sharp \mathcal{G})^2 }{N}o(1) + C_2 \left(N^2-(\sharp \mathcal{G})^2\right)\frac{\log N}{N}.
\end{multline*} For the first sum, we used the law of large numbers: the variables $A_{i,j}$ and $B_{N+1-i,N+1-j}$ are independent in all cases but at most $N+1$, and this part of the sum is bounded by $O(\log N) $. We used the same control on Gaussian variables as above. \\
This gives
\begin{equation*}
\left(\langle A, \Pi_{-} B \Pi_{-}^T \rangle \right)_{+} = o_{\mathbb{P}}(N),
\end{equation*} where $(x)_{+} := \max (0,x)$, which proves (\ref{Pi+caseieq}).
\end{proof}

\subsection{Proof of Lemma \ref{Pi+caseii}}
\begin{proof}[Proof of Lemma \ref{Pi+caseii}]\label{proof_lemma_Pi+caseii}
Recall that we work in the case $(ii)$ ($\sigma = \omega(N^{-7/6+\epsilon})$ for some $\epsilon>0$), with $\langle v_1,v'_1 \rangle >0$ and $\Pi = \mathrm{Id}$. We want to show that the aligning permutation between $v_1$ and $-v'_1$ has a very bad overlap. Taking the couple $(X,-Y)$ where $(X,Y) \sim \mathcal{J}(N,s)$, one can adapt the proof of Proposition \ref{prop_zero_one_toy}, with the new definitions
\begin{align*}
	\widetilde{S^{+}}(x,y) &:= \mathbb{P}\left(X_1 > x, -Y_1 < -y\right), \mbox{ and}\\
	\widetilde{S^{-}}(x,y) &:= \mathbb{P}\left(X_1 < x, -Y_1 > -y\right).
\end{align*} The analysis is even easier since for all $x,z$, there exist two constants $c,C$ such that $$0 < c \leq \widetilde{S^{+}}(x,x+s z) , \, \widetilde{S^{-}}(x,x+ sz) \leq C <1.$$
It is then easy to check that the proof of Proposition \ref{prop_zero_one_toy}, case $(ii)$ adapts well.
\end{proof}
 
\newpage

\section*{Acknowledgments}
This work was partially supported by the French government under management of Agence Nationale de la Recherche as part of the “Investissements d’avenir” program, reference ANR19-P3IA-0001 (PRAIRIE 3IA Institute).

\bibliographystyle{plain}
\bibliography{biblio_EIG1}

\end{document}